\newtheorem{thm}{Theorem}[section]
\newtheorem*{thmx}{Theorem}             
\newtheorem{prop}[thm]{Proposition}
\newtheorem{lem}[thm]{Lemma}
\theoremstyle{definition}
\newtheorem{defn}[thm]{Definition}
\newtheorem{ex}[thm]{Example}
\theoremstyle{remark}
\newtheorem{rem}[thm]{Remark}
\newcommand{\Rr}{\mathbb R}
\renewcommand{\d}{\mathrm d}                           
\newcommand{\br}{\left[\cdot , \cdot \right]}                
\newcommand{\brr}[1]{\left[#1\right]}                  
\newcommand{\X}{\ensuremath{\mathfrak{X}}}
\newcommand{\F}{\ensuremath{\mathcal{F}^B}}
\newcommand{\Pa}{\ensuremath{\mathcal{P}}}
\newcommand{\Q}{\ensuremath{\mathcal{Q}}}
\newcommand{\Ha}{\ensuremath{\mathcal{H}}}
\newcommand{\lie}{\mathcal{L}}
\newcommand{\h}{\ensuremath{\mathfrak{h}}}
\newcommand{\g}{\ensuremath{\mathfrak{g}}}
\newcommand{\lxi}{\ensuremath{\overleftarrow{\xi}}}
\newcommand{\C}{\ensuremath{\mathcal{C}}}  
\newcommand{\T}{{\mathcal{T}}}             
\renewcommand{\d}{\mathrm d}               
\newcommand{\smc}{\mbox{\,\tiny{$\circ $}\,}}         
\DeclareMathOperator{\rank}{rank}       
\DeclareMathOperator{\im}{Im}           
\newcommand{\al}{\alpha}
\newcommand{\be}{\beta}
\begin{document}

\title[Reduction of Poisson-Nijenhuis Lie algebroids]{Reduction of Poisson-Nijenhuis Lie algebroids
 to symplectic-Nijenhuis Lie algebroids with a nondegenerate Nijenhuis
 tensor}

\author{\noindent Antonio De Nicola}
\address{A.\ De Nicola: CMUC, Department of Mathematics, University of Coimbra, Coimbra, Portugal}
\email{antondenicola@gmail.com}

\author{Juan Carlos Marrero}
\address{\noindent J.\ C.\ Marrero: Unidad Asociada ULL-CSIC "Geometr{\'\i}a Diferencial y Mec\'anica Geo\-m\'e\-tri\-ca"
Departamento de Matem\'atica Fundamental, Facultad de
Matem\'aticas, Universidad de la Laguna, La Laguna, Tenerife,
Canary Islands, Spain} \email{jcmarrer@ull.es}

\author{Edith Padr{\'o}n}
\address{\noindent  E.\ Padr{\'o}n: Unidad Asociada ULL-CSIC "Geometr{\'\i}a Diferencial y Mec\'anica Geo\-m\'e\-tri\-ca"
Departamento de Matem\'atica Fundamental, Facultad de
Matem\'aticas, Universidad de la Laguna, La Laguna, Tenerife,
Canary Islands, Spain} \email{mepadron@ull.es}

\thanks{ \noindent\\{\bf Mathematics Subject Classifications:} 53D17
(Primary), 17B62, 17B66, 37J05, 37J35, 37K10, 53D05 (Secundary).\\
{\bf Keywords:} Lie algebroids, Poisson-Nijenhuis Lie algebroids, symplectic-Nijenhuis Lie algebroids, nondegenerate Nijenhuis tensors,
Reduction, bi-Hamiltonian systems.
}

\begin{abstract}
We show how to reduce, under certain regularities conditions,  a
Poisson-Nijenhuis Lie algebroid to a symplectic-Nijenhuis Lie
algebroid with a nondegenerate Nijenhuis tensor. We generalize the
work done by Magri and Morosi for the reduction of Poisson-Nijenhuis
manifolds. The choice of the more general framework of Lie
algebroids is motivated by the geometrical study of some reduced
bi-Hamiltonian systems. An explicit example of reduction of a
Poisson-Nijenhuis Lie algebroid is also provided.
\end{abstract}

\maketitle

\section{Introduction}             %
\label{sec:introduction}           %
Poisson-Nijenhuis structures on manifolds were introduced by Magri
and Morosi \cite{MagriMorosi} and then intensively studied by many
authors \cite{KM,Fer,SaVe,Va,VeSaCr}. Recall that a
Poisson-Nijenhuis manifold consists of a triple $(M, \Lambda, N)$,
where $M$ is a manifold endowed with a Poisson bivector field
$\Lambda$ and a $(1,1)$-tensor $N$ whose Nijenhuis torsion vanishes,
together with some compatibility conditions between $\Lambda$ and
$N$. Poisson-Nijenhuis manifolds are very important in the study of
integrable systems since they produce bi-Hamiltonian systems
\cite{KM,Fer,MagriMorosi}. In particular, Magri and Morosi showed
how to reduce a Poisson-Nijenhuis manifold to a nondegenerate one,
i.e., one where the Poisson structure is actually symplectic and the
Nijenhuis tensor is kernel-free. In this paper we show how to
perform the same process of reduction in the more general framework
of Lie algebroids. This type of structures have deserved a lot of interest in relation with
the formulation of the Mechanics  on disparate situations as systems with symmetry, systems evolving on semidirect products, Lagrangian and Hamiltonian systems on Lie algebras, and field theory equations (see, for instance, \cite{CLMM,LMM} and the references therein).

More precisely, in this paper we will see how to reduce a Poisson-Nijenhuis Lie
algebroid to a symplectic-Nijenhuis Lie algebroid with a nondegenerate
Nijenhuis tensor. One could wonder about the interest of such a
generalization. However, we show that working in the framework of
Poisson-Nijenhuis Lie algebroids one may understand the geometrical
structure of some physical examples related with bi-Hamiltonian
systems and hence it is not a mere academic exercise. Indeed we
present, as a motivating example, the study of the classic Toda
lattice which, as is well known, admits a Poisson-Nijenhuis
structure on $\Rr^{2n}$. Nevertheless, when switching to the more
convenient Flaschka coordinates, one sees that the Poisson-Nijenhuis
structure is lost, since there is no more a recursion operator
connecting the hierarchy of Poisson structures. Nevertheless, the
Poisson-Nijenhuis structure can be recovered if the system is
described as a Lie algebroid (see also \cite{Cas}).

The paper is organized as follows. In Section 2 we recall the notion
of Poisson-Nijenhuis Lie algebroid. The introduction of this structure is then motivated by describing the example of the Toda lattice (see  \cite{Cas}).
Moreover, we show how this example can be framed in a more general
case by considering a $G$-invariant Poisson-Nijenhuis structure on
the total space $M$ of a $G$-principal bundle. Such a structure, in
general does not induce a Poisson-Nijenhuis structure on $M/G$.
Nevertheless, it gives rise to a Poisson-Nijenhuis Lie algebroid on
the associated Atiyah bundle, which allows to build the
bi-Hamiltonian system in the reduced space $M/G$. In the following
sections we present the reduction of Poisson-Nijenhuis Lie
algebroids. The reduction process is carried on in two steps. The
first step, described in Section 3, consists in selecting a
generalized foliation $D=\rho_A(P^\sharp A^*)$ on the given
Poisson-Nijenhuis Lie algebroid $(A,\br_A,\rho_A, P,N)$ and then
showing that restricting on each leaf $L$ of $D$ one obtains a
symplectic-Nijenhuis Lie algebroid structure. The leaves of the
foliation $D$ are generally larger than those of the symplectic
foliation of the induced Poisson structure on the base manifold. In
Section 4 we deal with Lie algebroid epimorphisms introducing the
notion of projectability of Poisson-Nijenhuis structures. We prove
that given a projectable Poisson-Nijenhuis structure on a Lie
algebroid and a Lie algebroid epimorphism we obtain a
Poisson-Nijenhuis structure on the target Lie algebroid. Finally, we
introduce the notion of Poisson-Nijenhuis Lie algebroid morphism. In
Section 5 we study the reduction of a Lie algebroid by the foliation
generated by the vertical and complete lifts of the sections of a
Lie subalgebroid using an epimorphism of Lie algebroids.
In Section 6, we use the previous constructions to obtain a reduced
symplectic-Nijenhuis Lie algebroid with a nondegenerate Nijenhuis
tensor from an arbitrary symplectic-Nijenhuis Lie algebroid, under
suitable conditions. In this way we complete the second and final
step of the process of reduction. By putting together the two steps,
we obtain our main result, which is the following one.
\begin{thmx}
Let $(A,\br_A,\rho_A, P,N)$ be a Poisson-Nijenhuis Lie algebroid
such that:
\begin{enumerate}
\item[$i)$]
The Poisson structure $P$ has constant rank in the leaves of the
foliation $D=\rho_{A}(P^\sharp(A^\ast))$.
\end{enumerate}
\noindent If $L$ is a leaf of $D$, then, we have a
symplectic-Nijenhuis Lie algebroid structure
$(\br_{A_L},\rho_{A_L},\Omega_L, N_L)$ on
$A_L=P^\sharp(A^\ast)_{|L}\to L$.

Assume, moreover, that
\begin{enumerate}
\item[$ii)$] The induced Nijenhuis tensor $N_L:A_L\to A_L$ has constant Riesz index $k$;
\item[$iii)$] The dimension of the subspace $B_x=\ker N_x^k$ is constant, for all $x\in L$ (thus, $B=\ker N_L^k$ is a vector subbundle of $A$);
\item[$iii)$] The foliations $\rho_A(B)$ and $\mathcal{F}^B$ are regular, where
$$(\mathcal{F}^B)_a=\{X^c(a)+Y^v(a)/X,Y\in \Gamma(B)\},\mbox{ for }a\in A_L$$
\item[$iv)$] \emph{(condition $\mathcal{F}^{B})$}\; For all $x\in L$, $a_x-a'_x\in B_x$ if $a_x$ and $a'_x$ belong to the same leaf of the
foliation $\mathcal{F}^{B}$. 
\end{enumerate}
Then, we obtain a symplectic-Nijenhuis Lie algebroid structure
\linebreak
$(\br_{\widetilde{A_L}},\rho_{\widetilde{A_L}},\widetilde{\Omega_L},\widetilde{N_L})$
on the vector bundle
$\widetilde{A_L}=A_L/\mathcal{F}^{B}\to\widetilde{L}=L/\rho_{A_L}(B)$ with $\widetilde{N_L}$ nondegenerate.
\end{thmx}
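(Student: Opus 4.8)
The plan is to assemble the two reduction steps developed in the previous sections. For the first assertion I would invoke the construction of Section 3. Under hypothesis $i)$, on each leaf $L$ of the generalized foliation $D=\rho_A(P^\sharp(A^*))$ the restriction $A_L=P^\sharp(A^*)_{|L}$ inherits a Lie algebroid structure $(\br_{A_L},\rho_{A_L})$ over $L$: the Poisson--Nijenhuis compatibility guarantees that $\im P^\sharp$ is closed under $\br_A$ and that $\rho_A$ maps it onto $TL$. Since $P$ has constant rank on $L$, the restriction of $P$ to $A_L$ is nondegenerate, so it defines a section $\Omega_L\in\Gamma(\wedge^2 A_L^*)$ which is closed because $P$ is Poisson. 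The Poisson--Nijenhuis conditions ensure that $N$ preserves $\im P^\sharp$, hence restricts to $N_L\colon A_L\to A_L$ with vanishing Nijenhuis torsion, and the compatibility between $P$ and $N$ translates into the compatibility between $\Omega_L$ and $N_L$. This yields the symplectic-Nijenhuis Lie algebroid $(\br_{A_L},\rho_{A_L},\Omega_L,N_L)$ on $A_L\to L$.

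For the second assertion I would feed this symplectic-Nijenhuis Lie algebroid into the reduction procedure of Sections 4, 5 and 6. By $ii)$ the Riesz index $k$ of $N_L$ is constant and by $iii)$ the subbundle $B=\ker N_L^k$ of $A$ is well defined. Because $N_L$ has vanishing Nijenhuis torsion, $\Gamma(B)$ is closed under $\br_{A_L}$ and $\rho_{A_L}(B)$ is an integrable distribution, so $B\to L$ is a Lie subalgebroid of $A_L$. The regularity of the foliations $\rho_{A_L}(B)$ and $\F$, together with condition $\mathcal{F}^B$, are exactly the hypotheses under which the construction of Section 5 produces a smooth quotient vector bundle $\widetilde{A_L}=A_L/\F\to\widetilde{L}=L/\rho_{A_L}(B)$ and a Lie algebroid epimorphism $\pi\colon A_L\to\widetilde{A_L}$ covering the projection $L\to\widetilde{L}$.

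It then remains to check that the symplectic-Nijenhuis structure $(\Omega_L,N_L)$ is projectable through $\pi$ in the sense of Section 4 and that the projected tensor $\widetilde{N_L}$ is nondegenerate. Projectability of $N_L$ is where $B=\ker N_L^k$ enters: the kernel of $\pi$ is generated by $B$, and $N_L(\ker N_L^k)\subseteq\ker N_L^k$, so $N_L$ descends to $\widetilde{N_L}$; moreover, since on the quotient one has divided out exactly (up to the foliation $\F$) the generalized kernel of $N_L$, the induced $\widetilde{N_L}$ is fibrewise injective, hence bijective, i.e.\ nondegenerate. Similarly $\Omega_L$ is shown to descend to a nondegenerate closed section $\widetilde{\Omega_L}$. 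Applying the projectability theorem of Section 4 to the epimorphism $\pi$ then produces the symplectic-Nijenhuis Lie algebroid $(\br_{\widetilde{A_L}},\rho_{\widetilde{A_L}},\widetilde{\Omega_L},\widetilde{N_L})$ on $\widetilde{A_L}\to\widetilde{L}$ with $\widetilde{N_L}$ nondegenerate, which is the claimed conclusion.

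The main obstacle is Step 2 together with the projectability check of Step 3: one must verify that the abstract hypotheses of the Section 5--6 reduction are genuinely met by the symplectic-Nijenhuis Lie algebroid produced in Step 1. In particular, condition $\mathcal{F}^B$ is precisely what forces the leaves of $\F$ to be affine over $B$, so that $A_L/\F$ is a \emph{vector bundle} over $L/\rho_{A_L}(B)$ rather than merely a smooth manifold, and one must carry the constant-Riesz-index and constant-kernel-dimension data of $ii)$--$iii)$ coherently through both constructions so that the descent of $\Omega_L$ and of $N_L$ is well posed.
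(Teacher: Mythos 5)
Your proposal is correct and follows essentially the same route as the paper: the theorem is obtained by concatenating the restriction step of Section 3 (Theorem \ref{thm:SN:Lie:algebroid}) with the quotient by $B=\ker N_L^k$ of Sections 5--6 (Theorem \ref{thm:nondegenerate:N}), including the same arguments for projectability of $(\Omega_L,N_L)$ and for the fibrewise injectivity of $\widetilde{N_L}$ via $\ker N_L^{k+1}=\ker N_L^k$. The only quibble is that closure of $\im P^\sharp$ under $\br_A$ already follows from $[P,P]_A=0$ (since $P^\sharp$ is a Lie algebra morphism for $\br_P$), not from the Poisson--Nijenhuis compatibility.
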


The Magri-Morosi reduction (see \cite{MagriMorosi}) of a Poisson-Nijenhuis manifold $M$  is recovered from this result when we consider the tangent bundle $TM$ with its standard Lie algebroid structure.

The last section of the paper contains an explicit example of
reduction of a Poisson-Nijenhuis Lie algebroid which illustrates our
theory. This is obtained by considering a Lie group $G$ which is the
semidirect product of two Lie groups. We construct a $G$-invariant
Poisson-Nijenhuis structure on the cotangent bundle $T^*G$ and then
we obtain a Poisson-Nijenhuis structure on the associated Atiyah Lie
algebroid which is degenerate. Thus, it may be effectively reduced, according to our main
theorem.

\section{Poisson-Nijenhuis Lie algebroids: a motivating example}            %
\label{sec:preliminaries}          %
In this section we will motivate the introduction of the notion of Poisson-Nijenhuis Lie algebroids with a simple example: the Toda lattice. Firstly, we recall some notions and results about Lie algebroids.

A \emph{Lie algebroid} is a vector bundle $\tau_A \colon A \to M$
endowed with
\begin{itemize}
\item[(i)] an \emph{anchor map}, i.e. a vector bundle morphism $\rho_A\colon A \to TM$
\item[(ii)] a Lie bracket $\br_A$ on the space of the sections of $A$, $\Gamma(A)$,
such that the \emph{Leibniz rule},
\[
\brr{X,fY}_A=f \brr{X,Y}_A + \rho_A(X)(f) Y,
\]
is satisfied for all $X,Y\in \Gamma(A)$ and $f\in C^\infty(M).$
\end{itemize}
We denote such a Lie algebroid by $(A, \br_A,\rho_A)$ or simply by
$A$. In such a case  the
map $\rho_A$ induces a morphism of Lie algebras from
$(\Gamma(A),\br_A)$ to $({\mathfrak X}(M),\br)$ which we denote by the
same symbol. For further details about Lie algebroids see e.g.
\cite{Ma}.

Now, we describe some examples of Lie algebroids which will useful for our purposes.

\begin{ex}{\bf The standard Lie algebroid structure on the tangent bundle of a manifold.} {\rm The tangent  bundle $TM$ of a manifold $M$ is a Lie algebroid. The Lie bracket on the space of sections of  $TM\to M$ is the standard Lie bracket of vector fields  and the anchor map is the identity map.

\hfill{$\Diamond$}}
\end{ex}
\begin{ex}{\bf The Atiyah algebroid associated with a principal
$G$-bundle.}\label{Atiyah} {\rm Let $p:M \to M/G$ be a principal
$G$-bundle. It is well-known that the tangent lift of the principal
action of $G$ on $M$ induces a principal action of $G$ on $TM$ and
the space of orbits $TM/G$ of this action is a vector bundle over
$M/G$ with vector bundle projection $\tau_{TM/G}: TM/G \to M/G$
given by
\[
\tau_{TM/G}([v_{x}]) = p(x), \makebox[.6cm]{} \forall v_x \in T_x M.
\]
Furthermore, the space of sections $\Gamma({TM/G})$ may be
identified with the set of $G$-invariant vector fields on $M$ and
the Lie bracket of two $G$-invariant vector fields on $M$ is still
$G$-invariant. Thus, the standard Lie bracket of vector fields induces a Lie bracket
$\br_{TM/G}$ on the space
$\Gamma({TM/G})$ in a natural way.

On the other hand, the anchor map $\rho_{TM/G}: (TM)/G \to T(M/G)$ is
given by
\[
\rho_{TM/G}([v_{x}]) = (T_x p)(v_x), \mbox{ for } v_{x} \in T_xM,
\]
where $Tp: TM \to T(M/G)$ is the tangent map to the principal bundle
projection $p: M \to M/G$.

The resultant Lie algebroid $((TM)/G, \br_{TM/G}, \rho_{TM/G})$ is
called {\em the Atiyah algebroid} associated with the principal
$G$-bundle $p:M \to M/G$.
\hfill{$\Diamond$}
 }
\end{ex}
If  $(A, \br_A,\rho_A)$ is a Lie algebroid on the manifold $M$, we denote by $$d^A:\Gamma(\wedge ^\bullet A^*)\to \Gamma(\wedge^{\bullet +1} A^*)$$ the
\emph{Lie algebroid differential}  (see \cite{Ma}). Moreover, if $X$ is a
section of $A$, one may introduce, in a natural way, the \emph{Lie
derivative with respect to $X$} as the operator $\lie^A_X\colon
\Gamma(\wedge^{\bullet} A^\ast) \to \Gamma(\wedge^{\bullet} A^\ast)$ given by
\begin{equation}\label{eq:Cartan:formula}
\lie^A_X=i_X\smc\d^A + \d^A\smc i_X.
\end{equation}
It is  easy to prove  that the Lie derivative $\lie^A_X$ and the Lie
bracket $\br_A$ are related by
\begin{equation}\label{wellknown}
\lie^{A}_X i_Y=i_Y\lie^{A}_X + i_{\brr{X,Y}_A},\mbox{ with $X,Y\in \Gamma(A)$.}
\end{equation}

The Lie algebra bracket $\br_A$ on $\Gamma(A)$ can be extended to
the exterior algebra $(\Gamma(\wedge^{\bullet} A).$ The resulting bracket is called {\it Schouten bracket } (see e.g. \cite{Ma}). So, in the case of the bracket of a section $X$ of $A\to M$ and a section $R$ on $\wedge^rA\to M,$  we have that
 \begin{equation}\label{9'}
[X,R]_A(\alpha_1,\dots ,\alpha_r)=\rho_A(X)(R(\alpha_1,\dots ,\alpha_r))-\sum_{i=1}^rR(\alpha_1,\dots ,\lie_X^A\alpha_i,\dots ,\alpha_r)
\end{equation}
 for  $\alpha_1,\dots ,\alpha_r\in \Gamma(A^*).$

Now, let
$P$ be a section of the vector bundle $\wedge^2 A\to M$. We denote by
$P^\sharp$ the usual bundle map
\begin{equation}\label{eq:P:sharp}
P^\sharp \colon   A^\ast  \longrightarrow A,\;\;\;
         \al \longmapsto   P^\sharp(\al)=i_\al P.
\end{equation}
We say that $P$ defines a \emph{Poisson structure on $A$} if
$[P,P]_A=0$. In this case, the  bracket on the sections of
$A^\ast$ defined by
\begin{align}\label{eq:Koszul:bracket}
[\al,\be]_{P}&=\lie^A_{P^\sharp\al}\be-\lie^A_{P^\sharp\be}\al-\d^A\left(P(\al,\be)\right),
\quad \al,\be\in\Gamma(A^\ast),
\end{align}
is a Lie bracket, $P^\sharp:(\Gamma(A^*), [\cdot,\cdot]_P)\to (\Gamma(A),[\cdot,\cdot]_A)$ is a Lie algebra morphism
and the triple $A^\ast_P=(A^\ast, \br_{P},
\rho_{A}\smc P^\sharp)$ is a Lie algebroid \cite{M-X}. In fact, the
pair $(A,A^\ast_P)$ is a special kind of a Lie bialgebroid  called a
\emph{triangular Lie bialgebroid} \cite{M-X}. A Poisson structure
$P\in \Gamma(\wedge^2 A)$ on a Lie algebroid $(A,\br_A,\rho_A)$
induces a Poisson structure $\Lambda\in \Gamma(\wedge^2 TM)$ on the
base manifold $M$, defined by
\begin{equation}\label{eq:Poisson:induced}
\Lambda^\sharp = \rho_A \smc P^\sharp \smc\rho_A^*.
\end{equation}
A \emph{symplectic structure} on the Lie algebroid
$(A,\br_A,\rho_A)$ is a section $\Omega_A$ of the vector bundle
$\wedge^2 A^\ast \to M$ such that $d^A\Omega_A=0$ and $\Omega_A$ is
nondegenerate. In such a case, the map
$\Omega_A^{\flat}:\Gamma(A)\to\Gamma(A^\ast)$ given by
\[
\Omega_A^{\flat}(X)=i_X\Omega_A, \qquad \mbox{for }X\in\Gamma(A),
\]
is an isomorphism of $C^\infty(M)$-modules. Thus, one can define
from $\Omega_A$ a Poisson structure
\begin{equation}\label{eq:Poisson:symplectic}
P_{\Omega_A}(\al,\be)=\Omega_A((\Omega_A^{\flat})^{-1}(\al),(\Omega_A^{\flat})^{-1}(\be)),
\qquad \mbox{for }\al,\be\in\Gamma(A^\ast).
\end{equation}

Let $(A,\br_A,\rho_A)$ be a Lie algebroid over a manifold $M$. The
torsion  of a bundle map $N:A\to A$ (over the identity) is defined
by
\begin{equation}\label{eq:Nijenhuis}
\T_N(X,Y):=[NX,NY]_A-N[X,Y]_N, \quad  X,Y\in \Gamma(A),
\end{equation}
where $\br_N$ is given by
\begin{equation}\label{eq:deformed:bracket}
[X,Y]_N:=[NX,Y]_A+[X,NY]_A-N[X,Y]_A,\quad X,Y\in \Gamma(A).
\end{equation}

When $\T_N=0$, the bundle map $N$ is called a \emph{Nijenhuis
operator}, the triple $A_N=(A,\br_N,\rho_N=\rho\smc N)$ is a new
Lie algebroid  and  $N:A_N\to A$ is a Lie algebroid morphism (see
\cite{GU1,KM}).

Now, if $P\in\Gamma(\wedge^2 A)$ is a Poisson structure on $A$, we
say that a bundle map $N:A\to A$ is \emph{compatible} with $P$ if
$N\smc P^\sharp=P^\sharp\smc  N^*$ and the \emph{Magri-Morosi concomitant}
\begin{equation}\label{15'}
\C(P,N)(\al,\be)= \brr{\al,\be}_{NP}-\brr{\al,\be}_P^{N^\ast},\;\;\; \mbox{ for }\alpha,\beta\in \Gamma(A^*)
\end{equation}
vanishes, where $\br_{NP}$ is the  bracket defined by the section
$NP\in\Gamma(\wedge^{2} A)$ in a similar way as in
\eqref{eq:Koszul:bracket}, and $\br_P^{N^\ast}$ is the Lie bracket
obtained from the Lie bracket $\br_P$ by deformation along the
dual map $N^\ast:A^\ast\to A^\ast$, i.e.,
\begin{equation}\label{15''}
\brr{\al,\be}_P^{N^\ast}=\brr{N^\ast\al,\be}_P+\brr{\al,N^\ast\be}_P-N^\ast\brr{\al,\be}_P.
\end{equation}

\begin{defn}(\cite{GU1,KM})\label{def:PN:Lie:alg}
A \emph{Poisson-Nijenhuis Lie algebroid} $(A, P, N)$ is a Lie
algebroid $A$ equipped with a Poisson structure $P$ and a
Nijenhuis operator $N:A\to A$ compatible with $P$.
\end{defn}

A Poisson-Nijenhuis structure on a manifold $M$ \cite{MagriMorosi} may be seen as a Poisson-Nijenhuis Lie algebroid structure on the tangent  bundle $TM$ with its standard Lie algebroid structure.

If, in particular, the Poisson tensor $P$ in Definition
\ref{def:PN:Lie:alg} is nondegenerate, i.e. it comes from a
symplectic structure $\Omega_A$ on $A$ like in
\eqref{eq:Poisson:symplectic}, then $(A, \Omega_A, N)$ is said to
be a \emph{ symplectic-Nijenhuis Lie algebroid.} This is the case of two compatible Poisson $2$-sections $P_0$ and $P_1$,  where $P_0$ is associated with a symplectic structure.


\begin{ex}\label{ex:TL}{\it \bf The Toda lattice.} {\rm The finite, non-periodic Toda lattice (see, for instance, \cite{Cas,Fer,NM}) is a
system of $n$ particles on the line under exponential interaction
with nearby particles. Its phase space is $\Rr^{2n}$ with canonical
coordinates $(q^i,p_i)$ where $q^i$ is the displacement of the
$i$-th particle from its equilibrium position and $p_i$ is the
corresponding momentum. This system is particularly interesting when we consider exponential forces. Then,  the Hamiltonian function associated with the equations of motion is
$$H_1=\frac{1}{2}\sum_{i=1}^n p_i^2 + \sum_{i=1}^{n-1} e^{(q^i-q^{i+1})}.$$
Now, we consider the following two compatible Poisson structures on $\Rr^{2n}$
$$\Lambda_0=\sum_{i=1}^n \frac{\partial }{\partial q^i}\wedge \frac{\partial }{\partial p_i},$$
$$\Lambda_1=-\sum_{i<j} \frac{\partial }{\partial q^i}\wedge \frac{\partial }{\partial q^j}+
\sum_{i=1}^n p_i\frac{\partial }{\partial q^i}\wedge \frac{\partial
}{\partial p_i} + \sum_{i=1}^{n-1} e^{(q^i-q^{i+1})}\frac{\partial
}{\partial p_{i+1}}\wedge \frac{\partial }{\partial p_i}$$ which determine the Poisson-Nijenhuis structure $(N={\Lambda}_1^\sharp\circ ({\Lambda}_0^\sharp)^{-1},\Lambda_0),$ whose  Nijenhuis tensor $N$ is characterized by
\begin{equation}\label{N}\begin{array}{rcl}
N(\displaystyle\frac{\partial }{\partial q^i})&=&p_i\displaystyle\frac{\partial }{\partial q^i}-e^{q^{i-1}-q^i}\displaystyle\displaystyle\frac{\partial}{\partial p_{i-1}}+ e^{q^{i}-q^{i+1}}\displaystyle\frac{\partial}{\partial p_{i+1}}\\[8pt]
N(\displaystyle\frac{\partial }{\partial p_i})&=& p_i\displaystyle\frac{\partial }{\partial p_i} + \displaystyle\sum_{j<i}\displaystyle\frac{\partial }{\partial q^j}-\displaystyle\sum_{j>i}\displaystyle\frac{\partial }{\partial q^j}.
\end{array}\end{equation}

Note
that $\Lambda_0$ is the Poisson bivector corresponding to the
canonical symplectic structure of $\Rr^{2n}$. Furthermore, the Hamiltonian vector field $\Ha_{H_1}^{\Lambda_0}$ is bi-Hamiltonian. In fact,
$$\Ha_{H_1}^{\Lambda_0}=\Lambda_0^\sharp(dH_1)=\Lambda_1^\sharp(dH_0),$$
with $H_0=\sum_{i=1}^n p_i$.

In what follows, we will reduce the bi-Hamiltonian structure of the Toda lattice using
the action of $\Rr$ over $\Rr^{2n}$  given by
\begin{align*}
{\Rr} \times {\Rr}^{2n}&\longrightarrow {\Rr}^{2n}\\
 (t,(q^i,p_i))&\longmapsto (q^i+t,p_i)
\end{align*}
which induces the principal bundle
$$\pi: {\Rr}^{2n}\longrightarrow {\Rr}^{2n}/\Rr.$$
Note that ${\Rr}^{2n}/\Rr$ may be identified with
$(\Rr^{+})^{n-1}\times \Rr^n$ by
\begin{equation}\label{iden}
{\Rr}^{2n}/\Rr \longrightarrow (\Rr^{+})^{n-1}\times \Rr^n,\;\;\;
 (q^i,p_i)  \longmapsto (e^{(q^i-q^{i+1})},p_i).
\end{equation}
This identification corresponds to the choice of the so called
{\it Flaschka coordinates } which are actually global coordinates on
${\Rr}^{2n}/\Rr$, usually denoted by
$(a_1,\ldots,a_{n-1},$ $b_1,$ $\ldots,b_n)$.
 The Poisson structures
$\Lambda_0$ and $\Lambda_1$ are $\Rr$-invariant so that they descend
to the quotient ${\Rr}^{2n}/\Rr \cong (\Rr^{+})^{n-1}\times \Rr^n$.
The reduced Poisson structures are
\begin{equation}\label{eq:lambda:bar:0}
\begin{array}{rcl}
\bar\Lambda_0&=&\displaystyle\sum_{i=1}^{n-1} a_i\displaystyle\frac{\partial }{\partial
a_i}\wedge \left(\displaystyle\frac{\partial }{\partial b_i} - \displaystyle\frac{\partial
}{\partial b_{i+1}}\right),\\[10pt]
\bar\Lambda_1&=&\displaystyle\sum_{i=1}^{n-1} a_i\displaystyle\frac{\partial }{\partial
a_i}\wedge \left(b_i\frac{\partial }{\partial b_i}
 - b_{i+1}\displaystyle\frac{\partial }{\partial b_{i+1}} \right)\\
& &+\displaystyle\sum_{i=1}^{n-1} a_i\displaystyle\frac{\partial }{\partial b_{i+1}} \wedge
\frac{\partial }{\partial b_i} + \displaystyle\sum_{i=1}^{n-2} a_i
a_{i+1}\displaystyle\frac{\partial }{\partial a_{i+1}}\wedge\displaystyle\frac{\partial
}{\partial  a_i}.
\end{array}
\end{equation}
 These bivectors are again compatible and moreover we obtain by projection a hierarchy of compatible Poisson structures on the reduced space. However, they cannot be related
 through a recursion tensor $\bar N$. Indeed, if this were the case,
 then
 $$\bar\Lambda_1^\sharp= \bar N \smc \bar\Lambda_0^\sharp.$$
 Thus, using that $\bar\Lambda_0^\sharp(\displaystyle\sum_{i=1}^{n}db_{i})=0,$ we deduce that $\bar\Lambda_1^\sharp(\displaystyle\sum_{i=1}^ndb_{i})=0$ which is not true.

 The problem is that if we want to induce a tensor $\bar N:
T({\Rr}^{2n}/\Rr)\to T({\Rr}^{2n}/\Rr)$ it is necessary that the tensor $N$ on $\Rr^{2n}$ sends
vertical vectors with respect to $\pi: {\Rr}^{2n}\rightarrow
{\Rr}^{2n}/\Rr$ into vertical vectors. Note that $\Lambda_0$ and $N$
are $\Rr$-invariant. However, using   $\ker T\pi=<\displaystyle\sum_{i}\frac{\partial }{\partial q^i}>$  and (\ref{N}),  one deduces that $N(\ker T\pi)\nsubseteq \ker T\pi$ .

Furthermore, the Hamiltonian vector field $\Ha_{H_1}^{\Lambda_0}$ projects just in $\Ha_{\bar{H}_1}^{\bar{\Lambda}_0}$ and
$${\bar{\Lambda}_0}^\sharp d\bar{H}_1=\Ha_{\bar{H}_1}^{\bar{\Lambda}_0}={\bar{\Lambda}^\sharp_1}d\bar{H}_0$$
 with $\bar{H}_1=\displaystyle\frac{1}{2} \displaystyle\sum_{i=1}^{n}b_i^2 + \displaystyle\sum_{i=1}^{n-1}a_i$ and $\bar{H}_0=\displaystyle\sum_{i=1}^n b_i.$

These facts suggest that,  although the structure of Poisson-Nijenhuis can not be reduced, perhaps there exists another structure in a different space from which we may induce the above structures on the reduced space
${\Rr}^{2n}/\Rr.$ The answer to this question is associated with the notion of a Poisson-Nijenhuis Lie algebroid.}

We will describe now the Poisson-Nijenhuis Lie algebroid associated to the reduction of the Toda lattice. Consider the Atiyah algebroid $\tau_A: A=(T\Rr^{2n})/\Rr\to \Rr^{2n}/\Rr$ associated with the principal bundle $\pi:\Rr^{2n}\to \Rr^{2n}/\Rr.$

A global basis of $\Rr$-invariant vector fields on $\Rr^{2n}$ is $$\{e_i=e^{(q^{i+1}-q^i)}\sum_{k=1}^i\frac{\partial }{\partial q^k},\;\;\; e_n=\sum_{k=1}^n\frac{\partial }{\partial q^k}, \;\;\; f_j=\frac{\partial }{\partial p_j}\}_{\kern-3pt\tiny\begin{array}{l}i=1,..,n-1\\ j=1,..,n\end{array}}$$
Note that
$$[e_i,e_j]=[f_i,f_j]=[e_i,f_j]=0$$
for $i,j\in\{1,\dots, n\}$. Moreover, the vector field $e_k$, with
$k\in \{1,\dots, n-1\}$ (respectively, $f_l,$ with $l\in \{1,\dots
,n\}$) is $\pi$-projectable over the vector field
$\displaystyle\frac{\partial }{\partial a^k}$ (respectively,
$\displaystyle\frac{\partial }{\partial b_l}$) on
$(\Rr^+)^{n-1}\times \Rr^n.$ In addition, the vertical bundle of
$\pi$ is generated by the vector field $e_n$.

Thus, the Lie algebroid structure $([\cdot,\cdot]_A,\rho_A)$ on $A$ is characterized by the following conditions

\[
\brr{e_i,e_j}_A=\brr{f_i,f_j}_A=\brr{e_i,f_j}_A=0,
\]
and
\[
\rho_A(e_i)=\frac{\partial }{\partial a_i} \quad (i=1,\ldots, n-1),
\qquad  \rho_A(e_n)=0,\qquad
\rho_A(f_j)=\frac{\partial }{\partial b_j}\quad (j=1,\ldots, n).
\]
We may define the following two Poisson structures on $A$
\[
\begin{array}{rcl}
\pi_0&=&\displaystyle\sum_{i=1}^{n-1}  a_i e_i\wedge (f_i-f_{i+1}) + e_n\wedge f_n\\[10pt]
 \pi_1&=&-\displaystyle\sum_{i=1}^{n-2}   a_i a_{i+1}e_i\wedge e_{i+1} -
a_{n-1}e_{n-1}\wedge e_n + \displaystyle\sum_{i=1}^{n-1} a_i e_i \wedge
(b_if_i-b_{i+1}f_{i+1})\\
&&+ b_n e_n\wedge f_n -\displaystyle\sum_{i=1}^{n-1} a_i f_i\wedge f_{i+1}.
\end{array}\]
These Poisson structures cover ordinary Poisson tensors on the base
manifold $\Rr^{2n}/\Rr$ which are just the Poisson structures $\bar\Lambda_0$ and
$\bar\Lambda_1$ given by \eqref{eq:lambda:bar:0}. Since $\pi_0$ is symplectic, the Poisson structures on
$A$ are related by the recursion operator $N= \pi_1^\sharp\smc
(\pi_0^\sharp)^{-1}$ and $(A,\pi_0,N)$ is a symplectic-Nijenhuis Lie
algebroid.
\hfill{$\Diamond$}
\end{ex}

This example may be framed within a
more general framework as follows.

Let $p:M\rightarrow
\bar{M}=M/G$ be a  principal $G$-bundle. If a $G$-invariant
Poisson-Nijenhuis structure $(\Lambda,N)$ is given on $M$, then in
general we cannot induce  a Poisson-Nijenhuis structure on $M/G$
since the condition $N(\ker Tp)\nsubseteq \ker Tp$ might not be
satisfied. Nevertheless,  we  obtain a reduced Poisson-Nijenhuis Lie
algebroid. In fact, as we know, the space of sections of $\tilde{p}:TM/G\to \bar{M}=M/G$ (respectively, $\tilde{p}^*: (TM/G)^*\cong T^*M/G\to \bar{M}=M/G$) may be identified with the set of $G$-invariant vector fields ${\mathfrak X}^G(M)$ (respectively, $G$-invariant $1$-forms $\Omega^1(M)^G$) on $M$.

Now, since $\Lambda$ and $N$ are $G$-invariant, we deduce that
$$\Lambda(\alpha,\beta) \mbox{ is a $p$-basic function,  for $\alpha,\beta\in \Omega^1(M)^G$}$$
and
$$NX\in {\mathfrak X}^G(M), \mbox{ for }X\in {\mathfrak X}^G(M).$$
Thus, $\Lambda$ (respectively, $N$) induces a section $\tilde\Lambda$ (respectively, $\widetilde{N}$) on the vector bundle $\wedge^2(TM/G)\to \bar{M}=M/G$ (respectively, $TM/G\otimes T^*M/G\to \bar{M}=M/G$) in such a way that
\[
\begin{array}{lcl}\widetilde{\Lambda}(\alpha,\beta)\smc p=\Lambda(\alpha,\beta)&\mbox{ for } &\alpha,\beta\in \Omega^1(M)^G,\\
\tilde{N}X=NX,&\mbox{ for } &X\in {\mathfrak X}^G(M).\end{array}
\]
Moreover,
using the definition of the Lie algebroid structure on the Atiyah algebroid $p:TM/G\to \bar{M}=M/G$ and the fact that $(\Lambda,N)$ is a Poisson-Nijenhuis structure on $M$, we may prove the following result

\begin{prop}\label{2.6}
Let $p:M\to \bar{M}=M/G$ be a principal $G$-bundle and $(\Lambda,N)$ be a $G$-invariant Poisson-Nijenhuis structure on $M$. Then:
\begin{enumerate}
\item[$i)$] $(\Lambda,N)$ induces a Poisson-Nijenhuis Lie algebroid structure $(\tilde\Lambda, \tilde{N})$ on the Atiyah algebroid $\tilde{p}: TM/G\to \bar{M}=M/G$
\item[$ii)$] The Poisson structures $\Lambda$ and $N\Lambda$ on $M$ are $p$-projectable to two compatible Poisson structures $\bar\Lambda$ and $\overline{N\Lambda}$ on $\bar{M}=M/G.$
\item[$iii)$] The Poisson structures on $\bar{M}=M/G$ which are induced by the Poisson bi-sections $\tilde{\Lambda}$ and $\tilde{N}\tilde{\Lambda}$ on the Atiyah algebroid $p:M\to \bar{M}=M/G$ are just $\bar{\Lambda}$ and $\overline{N\Lambda}$, respectively.
\end{enumerate}
\end{prop}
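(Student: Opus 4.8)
The plan is to set up a dictionary between the Lie algebroid calculus on the Atiyah algebroid $A=TM/G$ and the ordinary tensor calculus on $M$ restricted to $G$-invariant objects, and then to transport the Poisson-Nijenhuis identities from $M$ to $A$ through it. Under the standard identifications $\Gamma(\wedge^k A)\cong\mathfrak{X}^k(M)^G$ and $\Gamma(\wedge^k A^*)\cong\Omega^k(M)^G$ --- compatible with the $C^\infty(\bar M)\cong C^\infty(M)^G$-module structures, with the wedge products, with the fibrewise pairing of forms and multivectors, and with $\rho_A=Tp$ --- I would first check that $d^A$ becomes the de Rham differential $d$, that $\lie^A_X$ becomes the ordinary Lie derivative, that the bracket $[\,\cdot\,,\cdot\,]_A$ and the Schouten bracket on $\Gamma(\wedge^\bullet A)$ become the Schouten bracket of $G$-invariant multivector fields, and that the Koszul-type bracket \eqref{eq:Koszul:bracket} becomes the analogous bracket of $G$-invariant $1$-forms on $M$. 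These matchings are forced: each of these operators is determined by $\rho_A$ and $[\,\cdot\,,\cdot\,]_A$ through the same Koszul and Cartan formulas as on $TM$, the Atiyah bracket of invariant vector fields is the ordinary Lie bracket, and $\rho_A(X)(f)$ corresponds to $X(f\circ p)$. Under this dictionary $\tilde\Lambda$ corresponds to $\Lambda\in\mathfrak{X}^2(M)^G$ and $\tilde N$ to $N$, directly from the definitions of $\tilde\Lambda$ and $\tilde N$; I expect assembling this dictionary carefully to be essentially the only real work, the three assertions being then a transport of known identities on $M$ together with two soft facts: that invariant sections span $A$ fibrewise, and that invariant Poisson bivectors descend.

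Given the dictionary, $i)$ follows at once. The Schouten square $[\tilde\Lambda,\tilde\Lambda]_A$ corresponds to $[\Lambda,\Lambda]=0$, so $\tilde\Lambda$ is a Poisson structure on $A$. The deformed bracket \eqref{eq:deformed:bracket} for $\tilde N$ corresponds to the deformed bracket for $N$ on invariant vector fields, hence the torsion \eqref{eq:Nijenhuis} of $\tilde N$ corresponds to $\T_N$; since $\T_N=0$ on $M$ and invariant sections span $A$ fibrewise, we get $\T_{\tilde N}=0$, i.e. $\tilde N$ is a Nijenhuis operator. For the compatibility, $\tilde N\circ\tilde\Lambda^\sharp=\tilde\Lambda^\sharp\circ\tilde N^*$ follows fibrewise from $N\circ\Lambda^\sharp=\Lambda^\sharp\circ N^*$, and the Magri-Morosi concomitant \eqref{15'} of $(\tilde\Lambda,\tilde N)$ corresponds, via the dictionary, to $\C(\Lambda,N)$, which vanishes because $(\Lambda,N)$ is Poisson-Nijenhuis. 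Thus $(A,\tilde\Lambda,\tilde N)$ is a Poisson-Nijenhuis Lie algebroid.

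For $ii)$, the elementary observation is that a $G$-invariant Poisson bivector always descends: for $f,g\in C^\infty(M)^G$ the function $\{f,g\}_\Lambda$ is again $G$-invariant (pull back by $g\in G$ and use $G$-invariance of $\Lambda$), so $C^\infty(M)^G\cong C^\infty(\bar M)$ is a Poisson subalgebra and $\bar\Lambda$ is well defined. By the classical Magri-Morosi theory \cite{MagriMorosi,KM}, $N\Lambda$ (with $(N\Lambda)^\sharp=N\circ\Lambda^\sharp$) is again Poisson and compatible with $\Lambda$; being built from the $G$-invariant data $N$ and $\Lambda$ it is itself $G$-invariant and hence also descends, to a Poisson bivector $\overline{N\Lambda}$ on $\bar M$. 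Finally $\Lambda$ is $p$-related to $\bar\Lambda$ and $N\Lambda$ to $\overline{N\Lambda}$, and the Schouten bracket of $p$-related multivector fields is $p$-related; applying this to $0=[\Lambda,N\Lambda]$ gives $[\bar\Lambda,\overline{N\Lambda}]=0$, so $\bar\Lambda$ and $\overline{N\Lambda}$ are compatible Poisson structures on $\bar M$.

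For $iii)$, I would simply unwind \eqref{eq:Poisson:induced}. Identifying the fibre $A_{p(x)}$ with $T_xM$, one has $\rho_A^*=(T_xp)^*$ and $\tilde\Lambda^\sharp_{p(x)}=\Lambda^\sharp_x$ (since $\tilde\Lambda\leftrightarrow\Lambda$), so the induced sharp map sends $\xi\in T^*_{p(x)}\bar M$ to $T_xp\big(\Lambda^\sharp_x((T_xp)^*\xi)\big)$; on the other hand, evaluating the defining relation $\{\bar f,\bar g\}_{\bar\Lambda}\circ p=\{\bar f\circ p,\bar g\circ p\}_\Lambda$ at the point $x$ shows that $\bar\Lambda^\sharp$ is given by exactly the same formula, so the Poisson structure on $\bar M$ induced by $\tilde\Lambda$ is $\bar\Lambda$. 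The identical computation with $\tilde N\tilde\Lambda\leftrightarrow N\Lambda$ shows that the Poisson structure induced by $\tilde N\tilde\Lambda$ is $\overline{N\Lambda}$, which finishes the proof.
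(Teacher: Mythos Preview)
Your proposal is correct and follows precisely the approach the paper indicates: the paper does not give a detailed proof of this proposition, but states just before it that the result follows ``using the definition of the Lie algebroid structure on the Atiyah algebroid \ldots and the fact that $(\Lambda,N)$ is a Poisson-Nijenhuis structure on $M$'', which is exactly the dictionary you set up between $G$-invariant tensor calculus on $M$ and the Lie algebroid calculus on $TM/G$. Your write-up supplies the details the paper omits, and the three parts are handled as one would expect.
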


\section{Reduction of Poisson-Nijenhuis Lie algebroids by restriction}     %
We consider the Poisson-Nijenhuis Lie algebroid $A=(T{\Rr}^{2n})/\Rr$ associated with the Toda lattice.
 It is easy to prove that if we restrict to a suitable open subset of the base manifold $\Rr^{2n}/\Rr$
 then $A=(T\Rr^{2n})/\Rr$ is a symplectic-Nijenhuis Lie algebroid with a nondegenerate Nijenhuis tensor.
 The main result of this paper is prove that, under regularities conditions, every Poisson-Nijenhuis algebroid may be reduced to a nondegenerate symplectic-Nijenhuis Lie algebroid. This reduction has two steps. In the first step we obtain a symplectic-Nijenhuis Lie algebroid, and then we will reduce it to a symplectic-Nijenhuis Lie algebroid with a nondegenerate Nijenhuis tensor using a general theory about the projectability of a Poisson-Nijenhuis structure with respect to a Poisson-Nijenhuis Lie algebroid epimorphism.   In this section we will describe the first step which is a  reduction by restriction. Previously,  we recall some notions about Lie algebroid morphisms which will be useful in the sequel.

\subsection{Lie algebroid morphisms and subalgebroids}
Let $\tau_A \colon A \to M$ and $\tau_{\widetilde{A}} \colon
\widetilde{A} \to \widetilde{M}$ be vector bundles. Suppose that
we have a morphism of vector bundles $(F,f)$ from $A$ to
$\widetilde{A}$:

\[
\xymatrix@C=3pc@R=3pc{A\ar[r]^{F}\ar[d]^{\tau_A} & \widetilde{A}\ar[d]^{\tau_{\widetilde{A}}}\\
            M\ar[r]^{f} & \widetilde{M}}
\]
\vspace{4mm}

\noindent A section of $A$, $X:M\to A,$ is said to be
$F$-\emph{projectable} if there is
$\widetilde{X}\in\Gamma(\widetilde{A})$ such that the following
diagram is commutative:

\[
\xymatrix@C=3pc@R=3pc{A\ar[r]^{F} & \widetilde{A}\\
            M\ar[u]^{X}\ar[r]^{f} & \widetilde{M}\ar[u]^{\widetilde{X}}}
\]
\vspace{5mm}

\noindent A section $\al \colon M \to \wedge^k A^\ast$ of
$\tau^k_{A^\ast} \colon \wedge^k A^\ast \to M$  is said to be
$F$-\emph{projectable} if there is
$\widetilde{\al}\in\Gamma(\wedge^k \widetilde{A}^\ast)$ such that
$\al=F^\ast\widetilde{\al}$, where $F^\ast
\widetilde{\al}\in\Gamma(\wedge^k A^\ast)$ is defined by
\begin{equation}\label{eq:F*}
(F^\ast
\widetilde{\al})(x)(a_1,\dots,a_k)=\widetilde{\al}({f(x)})(F(a_1),\dots,F(a_k))
\end{equation}
with $x\in M$ and $a_1, \dots, a_k \in A_x$.

Now, we consider Lie
algebroid structures $(\br_A,\rho_A)$ and
$(\br_{\widetilde{A}},\rho_{\widetilde{A}})$ on $A$ and
$\widetilde{A}$, respectively. We say that $(F,f)$ is a \emph{Lie
algebroid morphism} if
\begin{equation}\label{eq:Lie:alg:morphism}
\d^{A} (F^\ast \widetilde{\al})= F^\ast (\d^{\widetilde{A}}
\widetilde{\al}) \quad \mbox{for all }
\widetilde{\al}\in\Gamma(\wedge^k \widetilde{A}^\ast) \mbox{ and
all }k.
\end{equation}

Any Lie algebroid morphism preserves the anchor, i.e.,
\begin{equation}\label{eq:morphism:anchors}
\rho_{\widetilde{A}}\smc F=Tf\smc \rho_A.
\end{equation}
Moreover, if $X$ and $Y$ are $F$-projectable sections on $\widetilde{X}$ and $\widetilde{Y},$ respectively, it follows that $[X,Y]_A$ is a $F$-projectable section on $[\widetilde{X},\widetilde{Y}]_{\widetilde{A}}.$

In addition, if $X\in\Gamma(A)$ is $F$-projectable and
$\widetilde{\al}\in\Gamma(\widetilde{A}^\ast)$, then $\lie^{A}_X
(F^\ast \widetilde{\al})$ is a $F$-projectable section of $A^\ast$.
In fact, using (\ref{eq:Cartan:formula}) and (\ref{eq:Lie:alg:morphism}), we have that
\begin{equation}\label{eq:Lie:reduced}
\lie^{A}_X (F^\ast \widetilde{\al})=
F^\ast(\lie^{\widetilde{A}}_{\widetilde{X}} \widetilde{\al}),
\end{equation}
where $\widetilde{X}\in\Gamma(\widetilde{A})$ satisfies $F\smc
X=\widetilde{X} \smc f$.

Note that if $M=\widetilde{M}$ and $f$ is the identity map for
$M$, then $F:A\to \widetilde{A}$ is a Lie algebroid morphism if
and only if
\begin{equation}\label{eq:Lie:alg:morphism:same:base}
F\brr{X,Y}_A= \brr{FX,FY}_{\widetilde{A}},\qquad
\rho_{\widetilde{A}}(FX)=\rho_A(X)
\end{equation}
for $X,Y\in \Gamma(A).$

A \emph{Lie subalgebroid} is a morphism of Lie algebroids $I\colon
B\to A $ over $\iota\colon N\to M$ such that $\iota$ is an injective immersion and $I_{|B_x}:B_x\to A_{\iota(x)}$ is a
monomorphism, for all $x\in N$ (see \cite{HM}).

\subsection{The first step of the reduction: Reduction of Poisson-Nijenhuis Lie algebroids by restriction}


Let $(A,P)$ be a Poisson Lie algebroid. In order to reduce $A$ to
a symplectic Lie algebroid, let us consider the generalized
distribution $D\subset TM$ defined as follows: for each $x\in M$,
\[
D(x):=\rho_{A}(P^\sharp(A^\ast_x))\subset T_x M.
\]
Since $P^\sharp$ and $\rho_{A}$ are Lie algebroid morphisms over
the identity $id_M:M\to M$, we have
\[
\brr{\rho_{A}(P^\sharp\al),\rho_{A}(P^\sharp\be)}=\rho_{A}(P^\sharp\brr{\al,\be}_{P}),
\]
for any $\al,\be\in \Gamma(A^\ast)$, i.e. $D$ is involutive.
Furthermore, $D$ is locally finitely generated as a $C^\infty(M)$-module. As a consequence $D$
defines a generalized foliation of $M$ in the sense of Sussmann
\cite{Su}. Note that, due to \eqref{eq:Poisson:induced}, the tangent
distribution $S=\Lambda^\sharp(T^*M)$ of the symplectic foliation of
the induced Poisson structure $\Lambda\in\Gamma(\wedge^2TM)$ on the
base manifold $M$ is a subset of
$D=\rho_{A}(P^\sharp(A^\ast))$.

Let $L\subset M$ be a leaf of the foliation $D$ and consider the
subset $A_L:=P^\sharp(A^\ast)_{|L}\subset A$. We assume that the
Poisson structure $P^\sharp: A^\ast \to A$ has constant rank on each
leaf $L$. Then,  $A_L\to L$ is a vector subbundle of the vector bundle $A\to M$ and, since that $\rho_A(A_L)\subseteq TL$, we deduce that the Lie algebroid structure $([\cdot,\cdot]_A, \rho_A)$ on $A$ induces a Lie algebroid structure $([\cdot,\cdot]_{A_L},\rho_{A_L})$ on $A_L.$  In fact, $\rho_{A_L}=(\rho_A)_{|A_L}$ and the Lie bracket $[\cdot,\cdot]_{A_L}$ is characterized by the condition
$$
\brr{P^\sharp\al_{|L},P^\sharp\be_{|L}}_{A_L}=(\brr{P^\sharp\al,P^\sharp\be}_{A})_{|L}=(P^\sharp\brr{\al,\be}_{P})_{|L}
$$
for all $\alpha,\beta\in \Gamma(A^*).$ Note that if $\alpha,\alpha'\in \Gamma(A^*)$ and $P^\sharp(\alpha)_{|L}=P^\sharp(\alpha')_{|L}$ then, using that the restriction to $L$ of $\rho_A(P^\sharp(\beta))$ is tangent to $L$, we obtain that
$$(\brr{P^\sharp\al,P^\sharp\be}_{A})_{|L}=(\brr{P^\sharp\al',P^\sharp\be}_{A})_{|L}.$$

Furthermore, if we denote by $I:A_L\to A$ and $\iota:L\to M$,
respectively, the inclusion mappings of $A_L$ in $A$ and of $L$ in
$M$, then $I$ is a monomorphism of Lie algebroids from $A_L$ to
$A$ over $\iota:L\to M$ so that $A_L$ is a Lie subalgebroid of
$A$.

Now, we will prove that the Lie algebroid $A_L$ is symplectic.

Note that for any $X_L\in\Gamma(A_L)$ there exists a section
$\al\in\Gamma(A^\ast)$ such that $X_L$ $I$-projects on
$P^\sharp\al$, i.e., $I\smc X_L= P^\sharp\al \smc \iota$.

 Let us define a section
$\Omega_L\colon L\to \wedge^2 A_L^\ast$ by setting
\begin{equation}\label{eq:omega:L}
\Omega_L(X_L, Y_L)=P(\al,\be)\smc \iota, \qquad \mbox{for any }
X_L,Y_L \in \Gamma(A_L)
\end{equation}
$\al,\be$ being sections of $A^\ast$ such
that $X_L$ and $Y_L$ $I$-project on $P^\sharp\al$ and
$P^\sharp\be$, respectively. Clearly, $\Omega_L$ is well defined.
Indeed, if $P^\sharp\al\smc \iota=P^\sharp\alpha'\smc \iota$ then $P(\alpha,\beta)\smc \iota=P(\alpha',\beta)\smc \iota,$ for all $\beta\in \Gamma(A^*).$

Moreover, $\Omega_L$ is nondegenerate. Note that if $X_L\in \Gamma(A_L),$
$$I\smc X_L=(P^\sharp\alpha)\smc \iota$$
and $\Omega_L(X_L,Y_L)=0,$
for all $Y_L\in \Gamma(A_L)$, then $P^\sharp\al\smc
\iota=0$ and therefore $X_L=0$. Hence, $\Omega_L$ is an
almost symplectic structure on $A_L$.

In order to show that $\Omega_L$ is symplectic, we will prove the
following Lemma.

\begin{lem}\label{prop:one:to:one:correspondence}
Let $X_L,Y_L$ be sections of $A_L$ and $\al,\be\in\Gamma(A^\ast)$
such that $I\smc X_L= P^\sharp\al \smc \iota$ and $I\smc Y_L=
P^\sharp\be \smc \iota$. Then:
\begin{itemize}
\item[$(i)$] $\Omega_L^{\flat}(X_L)=-I^\ast\al$,
\item[$(ii)$]
$i_{\brr{X_L,Y_L}_{A_L}}\Omega_L=\lie^{A_L}_{X_{L}}\be_L-\lie^{A_L}_{Y_{L}}\al_L+
\d^{A_L}(P(\al,\be)\smc \iota),$
\end{itemize}
where $\al_L=i_{X_L}\Omega_L$ and $\be_L=i_{X_L}\Omega_L$.
\end{lem}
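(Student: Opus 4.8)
The plan is to derive both assertions from the Koszul bracket formula \eqref{eq:Koszul:bracket} together with the compatibility \eqref{eq:Lie:reduced} of the Lie derivative with the Lie algebroid morphism $I\colon A_L\to A$. For part $(i)$ I would simply unwind the definitions: for $Y_L\in\Gamma(A_L)$ one has, pointwise, $(I^\ast\al)(Y_L)=\al\smc(I\smc Y_L)=\al\smc(P^\sharp\be\smc\iota)=\langle\al,P^\sharp\be\rangle\smc\iota$, and since $\langle\al,P^\sharp\be\rangle=\langle\al,i_\be P\rangle=P(\be,\al)=-P(\al,\be)$, comparison with \eqref{eq:omega:L} gives $(I^\ast\al)(Y_L)=-\Omega_L(X_L,Y_L)$. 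As $Y_L$ is arbitrary this is exactly $\Omega_L^{\flat}(X_L)=i_{X_L}\Omega_L=-I^\ast\al$. In particular $\al_L=-I^\ast\al$ and, correcting the evident typo in the Lemma to $\be_L=i_{Y_L}\Omega_L$, also $\be_L=-I^\ast\be$.

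For part $(ii)$ the first step is to identify the $I$-projection of $\brr{X_L,Y_L}_{A_L}$. Since $X_L$ and $Y_L$ are $I$-projectable onto $P^\sharp\al$ and $P^\sharp\be$ and $I$ is a Lie algebroid morphism, $\brr{X_L,Y_L}_{A_L}$ is $I$-projectable onto $\brr{P^\sharp\al,P^\sharp\be}_A$; and since $P^\sharp\colon(\Gamma(A^\ast),\br_P)\to(\Gamma(A),\br_A)$ is a Lie algebra morphism, $\brr{P^\sharp\al,P^\sharp\be}_A=P^\sharp\brr{\al,\be}_P$. Hence $I\smc\brr{X_L,Y_L}_{A_L}=P^\sharp\brr{\al,\be}_P\smc\iota$, and applying part $(i)$ to the section $\brr{X_L,Y_L}_{A_L}$ together with the $1$-form $\brr{\al,\be}_P$ yields
\[
i_{\brr{X_L,Y_L}_{A_L}}\Omega_L=-I^\ast\brr{\al,\be}_P .
\]
I would then expand $\brr{\al,\be}_P$ via \eqref{eq:Koszul:bracket} and distribute $-I^\ast$ over the three summands. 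On the first two, \eqref{eq:Lie:reduced} applied to $I$ (with $X_L$ projecting onto $P^\sharp\al$, resp.\ $Y_L$ onto $P^\sharp\be$) gives $I^\ast(\lie^A_{P^\sharp\al}\be)=\lie^{A_L}_{X_L}(I^\ast\be)$ and $I^\ast(\lie^A_{P^\sharp\be}\al)=\lie^{A_L}_{Y_L}(I^\ast\al)$; on the third, the morphism identity $I^\ast\smc\d^A=\d^{A_L}\smc I^\ast$ gives $I^\ast(\d^A P(\al,\be))=\d^{A_L}(P(\al,\be)\smc\iota)$. Substituting these and using $\al_L=-I^\ast\al$, $\be_L=-I^\ast\be$ produces exactly
\[
i_{\brr{X_L,Y_L}_{A_L}}\Omega_L=\lie^{A_L}_{X_L}\be_L-\lie^{A_L}_{Y_L}\al_L+\d^{A_L}(P(\al,\be)\smc\iota).
\]

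The one point that deserves care is the identification of the $I$-projection of $\brr{X_L,Y_L}_{A_L}$: $I$-projectability is a condition only along $L$, so the argument genuinely relies on the fact, recorded just before the Lemma, that every section of $A_L$ $I$-projects on some $P^\sharp\al$, together with $P^\sharp$ intertwining $\br_P$ and $\br_A$. Everything else is formal Cartan calculus with the identities collected in Section~2, so I expect no further obstacle. This Lemma is precisely the input for the next step, namely $\d^{A_L}\Omega_L=0$: combining $(ii)$ with the identities \eqref{eq:Cartan:formula} and \eqref{wellknown} one rewrites $i_{\brr{X_L,Y_L}_{A_L}}\Omega_L$ in a second way, and the comparison forces $i_{X_L}i_{Y_L}\d^{A_L}\Omega_L$ to vanish on the sections $X_L$ obtained this way, which generate $\Gamma(A_L)$; hence $\d^{A_L}\Omega_L=0$ and $\Omega_L$ is symplectic.
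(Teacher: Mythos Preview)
Your proof is correct and follows essentially the same approach as the paper: part $(i)$ is the same pointwise computation unwinding the definition \eqref{eq:omega:L}, and part $(ii)$ proceeds exactly as the paper does, first identifying $I\smc\brr{X_L,Y_L}_{A_L}=P^\sharp\brr{\al,\be}_P\smc\iota$ via the morphism properties of $I$ and $P^\sharp$, then applying $(i)$ to obtain $i_{\brr{X_L,Y_L}_{A_L}}\Omega_L=-I^\ast\brr{\al,\be}_P$, and finally expanding via \eqref{eq:Koszul:bracket} together with \eqref{eq:Lie:reduced} and the compatibility of $I^\ast$ with the differentials. Your observation that $\be_L=i_{X_L}\Omega_L$ in the statement should read $\be_L=i_{Y_L}\Omega_L$ is also correct.
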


\begin{proof}
$(i)$ If $Y_L\in\Gamma(A_L)$ is a section of $A_L$ which
$I$-projects on $P^\sharp\be$, for some $\be\in \Gamma(A^\ast)$, then
\begin{align*}
\Omega_L^\flat(X_L)(Y_L)=(\be\smc \iota)(P^\sharp\al \smc
\iota)=-(\al\smc \iota)(P^\sharp\be \smc \iota) =-(\al\smc
\iota)(I \smc Y_L)=-I^\ast\al(Y_L).
\end{align*}

$(ii)$ Note that, since $(I,\iota)$ and $P^\sharp$ are Lie
algebroid morphisms, we have
\begin{equation*}
I\smc \brr{X_L,Y_L}_{A_L}=\brr{P^\sharp\al, P^\sharp\be}_{A}\smc
\iota= P^\sharp\brr{\al,\be}_{P}\smc \iota.
\end{equation*}
So, by $(i)$ we obtain
\begin{equation}\label{eq:I-relation}
i_{\brr{X_L,Y_L}_{A_L}}\Omega_L=-I^\ast \brr{\al,\be}_{P}.
\end{equation}
Now, from  \eqref{eq:Koszul:bracket}, \eqref{eq:Lie:reduced} and
\eqref{eq:I-relation} we obtain the claim.
\end{proof}

\begin{prop}\label{prop:Omega_L:symplectic}
The $2$-section $\Omega_L$ on $A_L$ defined by \eqref{eq:omega:L}
is symplectic.
\end{prop}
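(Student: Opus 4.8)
The plan is to prove that $\d^{A_L}\Omega_L=0$; since nondegeneracy of $\Omega_L$ has already been checked, this is exactly what is needed for $\Omega_L$ to be symplectic. Because $\d^{A_L}\Omega_L$ is $C^\infty(L)$-multilinear (hence tensorial), it suffices to evaluate it on triples $X_L,Y_L,Z_L\in\Gamma(A_L)$ of the special form appearing in Lemma~\ref{prop:one:to:one:correspondence}, i.e.\ sections that $I$-project respectively on $P^\sharp\al$, $P^\sharp\be$, $P^\sharp\ga$ for suitable $\al,\be,\ga\in\Gamma(A^\ast)$; since $A_L=P^\sharp(A^\ast)_{|L}$ by construction, such sections span $\Gamma(A_L)$ locally.

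On such a triple I would write out the Cartan-type expression for $\d^{A_L}\Omega_L(X_L,Y_L,Z_L)$, namely the alternating sum of the three anchor-derivative terms $\rho_{A_L}(X_L)(\Omega_L(Y_L,Z_L))$ and cyclic, plus the three bracket terms $-\Omega_L(\brr{X_L,Y_L}_{A_L},Z_L)$ and cyclic. Into each bracket term I substitute part $(ii)$ of Lemma~\ref{prop:one:to:one:correspondence}, which gives $i_{\brr{X_L,Y_L}_{A_L}}\Omega_L=\lie^{A_L}_{X_L}\be_L-\lie^{A_L}_{Y_L}\al_L+\d^{A_L}(P(\al,\be)\smc\iota)$, with $\al_L=i_{X_L}\Omega_L$ and $\be_L=i_{Y_L}\Omega_L$. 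Then, using the standard identity $(\lie^{A_L}_{X_L}\be_L)(Z_L)=\rho_{A_L}(X_L)(\be_L(Z_L))-\be_L(\brr{X_L,Z_L}_{A_L})$ — a consequence of \eqref{wellknown} — together with $\be_L(Z_L)=\Omega_L(Y_L,Z_L)$ and $\d^{A_L}(P(\al,\be)\smc\iota)(Z_L)=\rho_{A_L}(Z_L)(\Omega_L(X_L,Y_L))$ (recall $\Omega_L(X_L,Y_L)=P(\al,\be)\smc\iota$ by \eqref{eq:omega:L}), each term $\Omega_L(\brr{X_L,Y_L}_{A_L},Z_L)$ is rewritten entirely in terms of anchor-derivatives of $\Omega_L(\cdot,\cdot)$ and of the remaining two cyclic bracket terms.

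A direct bookkeeping then shows that, after these substitutions, every anchor-derivative term and every bracket term in $\d^{A_L}\Omega_L(X_L,Y_L,Z_L)$ cancels against its counterpart, so the expression vanishes identically on the spanning sections, whence $\d^{A_L}\Omega_L=0$. (Slightly more slickly, one can instead equate Lemma~\ref{prop:one:to:one:correspondence}$(ii)$ with the tautological identity $i_{\brr{X_L,Y_L}_{A_L}}\Omega_L=\lie^{A_L}_{X_L}\be_L-i_{Y_L}\lie^{A_L}_{X_L}\Omega_L$, expand $\lie^{A_L}_{X_L}\Omega_L=i_{X_L}\d^{A_L}\Omega_L+\d^{A_L}\al_L$ via \eqref{eq:Cartan:formula}, and observe that everything collapses to $i_{Y_L}i_{X_L}\d^{A_L}\Omega_L=0$, the same conclusion.) I do not expect a deep obstacle here: the genuine input, the vanishing $[P,P]_A=0$ entering through the Koszul bracket \eqref{eq:Koszul:bracket}, has already been absorbed into Lemma~\ref{prop:one:to:one:correspondence}$(ii)$, so the only things requiring care are the sign bookkeeping in the substitution and the observation that the distinguished $I$-projectable sections locally span $\Gamma(A_L)$.
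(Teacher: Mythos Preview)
Your proposal is correct, and in particular your ``slicker'' variant is essentially the paper's own argument: the paper computes $i_{X_L}i_{Y_L}\d^{A_L}\Omega_L$ via \eqref{eq:Cartan:formula} and \eqref{wellknown}, substitutes Lemma~\ref{prop:one:to:one:correspondence}$(ii)$ for $i_{\brr{X_L,Y_L}_{A_L}}\Omega_L$, and observes that what remains is $\d^{A_L}(i_{X_L}\be_L)+\d^{A_L}(P(\al,\be)\smc\iota)=0$ because $i_{X_L}\be_L=-P(\al,\be)\smc\iota$. Your first (three-argument Cartan-formula) route is just a more explicit unpacking of the same cancellation.
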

\begin{proof}
We have only to prove that $\Omega_L$ is closed. In fact, for any $X_L,Y_L
\in \Gamma(A_L)$, we have

\begin{equation}\label{omega:closed}
\begin{split}
i_{X_L}i_{Y_L}\d^{A_L}\Omega_L &=i_{X_L}\lie^{A_L}_{Y_{L}}\Omega_L - i_{X_L}\d^{A_L}i_{Y_{L}}\Omega_L\\
&=\lie^{A_L}_{Y_{L}}i_{X_L}\Omega_L +
i_{\brr{X_L,Y_L}_{A_L}}\Omega_L- i_{X_L}\d^{A_L}i_{Y_{L}}\Omega_L,
\end{split}
\end{equation}
where we have used \eqref{eq:Cartan:formula} and
\eqref{wellknown}.

By applying Lemma \ref{prop:one:to:one:correspondence}, from
\eqref{omega:closed} we get
$$
i_{X_L}i_{Y_L}\d^{A_L}\Omega_L = \d^{A_L}i_{X_{L}}\be_L + \d^{A_L} (P(\al,\be)\smc \iota)=0.
$$
\end{proof}

Now, we consider a Nijenhuis operator $N:A\to A$ on the Lie
algebroid $A$ which is compatible with the Poisson structure $P$.
Using the compatibility condition $N\smc P^\sharp=P^\sharp\smc N^*,$ we may induce by restriction a
new operator $N_L:A_L\to A_L$ on $A_L$ such that
\begin{equation}\label{eq:N_L:N}
I\smc N_L(X_L)=N(P^\sharp \al)\smc\iota, \mbox{ for all $X_L\in \Gamma(A_L)$}
\end{equation}
where  $\al\in\Gamma(A^\ast)$ is a section of $A^*$ such that $X_L$
$I$-projects on $P^\sharp \al$.

Note that, from (\ref{eq:N_L:N}), we deduce that
\begin{equation}\label{24'}
I\smc N_L=N\smc I
\end{equation}
 which implies that
\begin{equation}\label{eq:N*I*}
N_L^\ast(I^\ast \alpha)=I^\ast (N^\ast \alpha), \mbox{ for }
\alpha\in \Gamma(A^\ast).
\end{equation}

\begin{thm}\label{thm:SN:Lie:algebroid}
Let $(A, P, N)$ be a Poisson-Nijenhuis Lie algebroid such that the
Poisson structure has constant rank in the leaves of the foliation
$D=\rho_{A}(P^\sharp(A^\ast))$. Then, we have a
symplectic-Nijenhuis Lie algebroid $(A_L, \Omega_L, N_L)$ on each
leaf $L$ of $D$.
\end{thm}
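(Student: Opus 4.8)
We already know from Proposition~\ref{prop:Omega_L:symplectic} that $\Omega_L$ is a symplectic structure on the Lie algebroid $A_L$, and we have the restricted operator $N_L\colon A_L\to A_L$ defined by \eqref{eq:N_L:N}. So it remains to check two things: that $N_L$ is a Nijenhuis operator on $A_L$ (i.e. $\T_{N_L}=0$), and that $N_L$ is compatible with the symplectic Poisson structure $P_{\Omega_L}$, which in the symplectic case means exactly that $N_L\smc P_{\Omega_L}^\sharp = P_{\Omega_L}^\sharp\smc N_L^\ast$ together with the vanishing of the Magri--Morosi concomitant $\C(P_{\Omega_L},N_L)$. The first step will be to identify $P_{\Omega_L}^\sharp$ explicitly: since $\Omega_L^\flat(X_L)=-I^\ast\al$ whenever $I\smc X_L=P^\sharp\al\smc\iota$ (part $(i)$ of Lemma~\ref{prop:one:to:one:correspondence}), the inverse $(\Omega_L^\flat)^{-1}$ sends $I^\ast\al$ to $-X_L$, and hence $P_{\Omega_L}^\sharp(I^\ast\al)$ agrees with the section of $A_L$ that $I$-projects on $-P^\sharp\al\smc\iota$. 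In other words $P_{\Omega_L}$ is nothing but the restriction to $L$ of $P$ transported through $P^\sharp$; concretely $I\smc P_{\Omega_L}^\sharp(I^\ast\al) = -P^\sharp\al\smc\iota$. (One should check that every element of $A_L^\ast$ is of the form $I^\ast\al$, which follows from surjectivity of $I^\ast$.)

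Granting that, the compatibility conditions transfer from $A$ to $A_L$ by pullback. For the algebraic condition, apply $I\smc(-)$ to $N_L\smc P_{\Omega_L}^\sharp(I^\ast\al)$: using \eqref{24'} and the displayed formula for $P_{\Omega_L}^\sharp$ this equals $-N(P^\sharp\al)\smc\iota = -(P^\sharp\smc N^\ast)(\al)\smc\iota = P_{\Omega_L}^\sharp(I^\ast N^\ast\al) = P_{\Omega_L}^\sharp(N_L^\ast I^\ast\al)$, where the last step is \eqref{eq:N*I*}; since $I$ is fibrewise injective this gives $N_L\smc P_{\Omega_L}^\sharp=P_{\Omega_L}^\sharp\smc N_L^\ast$. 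For the vanishing of the concomitant $\C(P_{\Omega_L},N_L)$ and of the torsion $\T_{N_L}$, the idea is the same: both are tensorial expressions built from the brackets $\br_{A_L}$, $\br_{P_{\Omega_L}}$, the Lie derivatives $\lie^{A_L}$, and the maps $N_L$, $P_{\Omega_L}^\sharp$, and every one of these ingredients is, by construction, the pullback along $(I,\iota)$ of the corresponding ingredient on $A$ — this is exactly what Lemma~\ref{prop:one:to:one:correspondence}$(ii)$, equations \eqref{24'}, \eqref{eq:N*I*}, \eqref{eq:I-relation} and the characterization of $\br_{A_L}$ encode. Since $I$ is a Lie algebroid monomorphism, an identity $\Theta=0$ on $A$ (here $\Theta=\T_N$ and $\Theta=\C(P,N)$) pulls back to the corresponding identity $\Theta_L=0$ on $A_L$; one checks $I^\ast\Theta_L = (\text{pullback of }\Theta) = 0$ and uses fibrewise injectivity of $I$ to conclude $\Theta_L=0$.

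**Where the work lies.** The only genuinely delicate point is the bookkeeping needed to show that $\T_{N_L}$ and $\C(P_{\Omega_L},N_L)$ really are the $(I,\iota)$-pullbacks of $\T_N$ and $\C(P,N)$, i.e. that evaluating them on $I$-projectable sections $X_L,Y_L$ (resp.\ on pullback $1$-forms $I^\ast\al,I^\ast\be$) reproduces $\T_N(P^\sharp\al,P^\sharp\be)\smc\iota$ etc. This requires knowing that sections of $A_L$ are locally $I$-projectable and that all the derived brackets ($\br_{N_L}$, $\br_{N_LP_{\Omega_L}}$, $\br_{P_{\Omega_L}}^{N_L^\ast}$, the deformed Koszul bracket) restrict compatibly — each of these is a routine consequence of \eqref{eq:Lie:reduced}, the Leibniz rule, and the formulas already established, but there are several of them and one must be careful that the restriction process commutes with each bracket-forming operation. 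Once that is in place, the vanishing statements on $A_L$ are immediate from those on $A$, and together with Proposition~\ref{prop:Omega_L:symplectic} this proves that $(A_L,\Omega_L,N_L)$ is a symplectic-Nijenhuis Lie algebroid.
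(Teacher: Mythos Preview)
Your approach is essentially identical to the paper's: identify $P_L^\sharp(I^\ast\alpha)=X_L$ via Lemma~\ref{prop:one:to:one:correspondence}(i), then verify that $\T_{N_L}$, the algebraic compatibility $N_L\smc P_L^\sharp=P_L^\sharp\smc N_L^\ast$, and $\C(P_L,N_L)$ all arise as pullbacks through the Lie algebroid monomorphism $(I,\iota)$ of the corresponding (vanishing) quantities on $A$, using \eqref{24'}, \eqref{eq:N*I*}, \eqref{eq:Lie:reduced} and fibrewise injectivity of $I$. One small slip: with the paper's convention $P_L^\sharp=-(\Omega_L^\flat)^{-1}$ (see \eqref{eq:Poisson:symplectic}) one gets $I\smc P_L^\sharp(I^\ast\alpha)=+P^\sharp\alpha\smc\iota$ rather than $-P^\sharp\alpha\smc\iota$ --- the sign is harmless for the argument (it cancels symmetrically in the compatibility check and only flips the overall sign of $\C$), but worth correcting.
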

\begin{proof}
From Proposition \ref{prop:Omega_L:symplectic}, we deduce that
$(A_L,\Omega_L)$ is a symplectic Lie algebroid. Denote by $P_L$ the
Poisson structure corresponding to $\Omega_L$, defined by
$P_L^\sharp:= -(\Omega_L^\flat)^{-1}$. Note that, using Lemma \ref{prop:one:to:one:correspondence}, we have that
\begin{equation}\label{25'} P_L(I^*\alpha,I^*\beta)=P(\alpha,\beta)\smc \iota,\;\;\;\mbox{ for all }\alpha,\beta\in \Gamma(A^*).
\end{equation}

 Next, we prove that $N_L$ is a Nijenhuis operator compatible
with $P_L$. Indeed, firstly consider $X_L,Y_L$ sections of $A_L$.
Then, there are $\al$ and $\be$ sections of $A^\ast$ such that $X_L$
and $Y_L$ $I$-project on $P^\sharp \al$ and $P^\sharp \be,$
respectively. Thus, using \eqref{24'} and the fact that $(I,\iota)$ is a monomorphism of Lie algebroids, we
deduce that
\begin{equation}\label{eq:T_N_L:T_N}
I \smc \T_{N_L}(X_L,Y_L)= \T_{N}(P^\sharp\al, P^\sharp\be) \smc
\iota=0.
\end{equation}

On the other hand, for  $\al\in\Gamma(A^\ast),$ we consider the section
$X_L\in\Gamma(A_L)$ defined by
\[
I\smc X_L=P^\sharp \al \smc \iota.
\]
Using Lemma \ref{prop:one:to:one:correspondence} we deduce that
\begin{equation}\label{eq:P_L}
P_L^\sharp(I^\ast\al)= X_L.
\end{equation}

Now, from \eqref{eq:N_L:N}  and since $N \smc P^\sharp=P^\sharp \smc N^*,$ it follows that
$$I(N_L(X_L))=P^\sharp(N^*\alpha)\smc \iota.$$
Therefore, using again Lemma \ref{prop:one:to:one:correspondence}, we obtain that
$$P_L^\sharp(I^*(N^*\alpha))=N_L(X_L)=N_L(P^\sharp_L(I^*\alpha))$$
which implies that (see \eqref{eq:N*I*})
$$P^\sharp_L(N_L^*(I^*\alpha))=N_L(P_L^\sharp(I^*\alpha)).$$
This proves that $P_L^\sharp \smc N_L^*=N_L\smc P^\sharp_L.$

Finally, from \eqref{15'}, \eqref{15''}, \eqref{eq:Lie:reduced}, \eqref{eq:N_L:N}, \eqref{25'} and using that $N\smc P^\sharp=P^\sharp\smc N^*$ and the fact that $(I,\iota)$ is a Lie algebroid monomorphism, we conclude that
$$0=I^*(C(P,N)(\alpha,\beta))=C(P_L, N_L)(I^*\alpha, I^*\beta)\smc \iota,$$
for $\alpha,\beta\in \Gamma(A^*).$

This ends the proof of the result.

\end{proof}

\section{Reduction of Poisson-Nijenhuis Lie algebroids by epimorphisms of Lie algebroids}%
\label{sec:reduction:by:epimorphism}                                                     %
In order to complete the process of reduction, we now deal with the
general problem of the projectability of a Poisson-Nijenhuis structure
on a Lie algebroid with respect to a  vector bundle epimorphism.

Let $\tau_A \colon A \to M$ and $\tau_{\widetilde{A}} \colon
\widetilde{A} \to \widetilde{M}$ be vector bundles on the manifolds
$M$ and $\widetilde{M}$, respectively, and let $(\Pi,\pi)$  be an
epimorphism of vector bundles,
\[
\xymatrix@C=4pc@R=4pc{ A \ar[r]^{\Pi}\ar[d]_{\tau_A} & \widetilde{A} \ar[d]^{\tau_{\widetilde{A}}}\\
                       M\ar[r]^{\pi} & \widetilde{M}}
\]
\vspace{3mm}

\noindent i.e., the map $\pi\colon M \to \widetilde{M}$ is a surjective
submersion and, for each $x\in M$, $\Pi_x\colon A_x \to
\widetilde{A}_{\pi(x)}$ is an epimorphism of vector spaces.

Denote by $\Gamma_p(A)$ (respectively, $\Gamma_p(A^\ast)$) the
space of the $\Pi$-projectable sections of $A$ (respectively, of
$A^\ast$). In \cite{IMMMP} a characterization is found to
establish when a vector bundle epimorphism is a Lie algebroid
epimorphism.

\begin{prop}\label{prop:epi} (see \cite{IMMMP})
Let $(\Pi,\pi):A\to \widetilde{A}$ be a vector bundle epimorphism.
Suppose that $(\br_A,\rho_A)$ is a Lie algebroid structure over
$A$. Then, there exists a unique Lie algebroid structure on
$\widetilde{A}$ such that $(\Pi,\pi)$ is a Lie algebroid
epimorphism if and only if the following conditions hold:
\begin{enumerate}
\item[i)] The space $\Gamma_p(A)$ of the $\Pi$-projectable
sections of $A$ is a Lie subalgebra of $(\Gamma(A),\br_A)$ and
\item[ii)] $\Gamma(\ker \Pi)$ is an ideal of
$\Gamma_p(A).$
\end{enumerate}
\end{prop}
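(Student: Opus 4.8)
The plan is to prove both implications, the forward one being essentially a restatement of properties of morphisms and the backward one a descent-of-structure argument. For necessity, suppose $\widetilde A$ carries a Lie algebroid structure for which $(\Pi,\pi)$ is an epimorphism. By the property recalled above for Lie algebroid morphisms, the bracket of two projectable sections is again projectable (onto the bracket of their images), so $\Gamma_p(A)$ is a Lie subalgebra of $(\Gamma(A),\br_A)$; this is $i)$. Also every $V\in\Gamma(\ker\Pi)$ covers the zero section of $\widetilde A$, so $\Gamma(\ker\Pi)\subseteq\Gamma_p(A)$, and for $W\in\Gamma_p(A)$ covering $\widetilde W$ the section $\brr{V,W}_A$ covers $\brr{0,\widetilde W}_{\widetilde A}=0$, i.e.\ $\brr{V,W}_A\in\Gamma(\ker\Pi)$; hence $\Gamma(\ker\Pi)$ is an ideal of $\Gamma_p(A)$, which is $ii)$.

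For sufficiency, assume $i)$ and $ii)$. I would first record two preliminary facts, which are exactly where the hypotheses get used. The first is that \emph{projectable sections are abundant}: given $\tilde x\in\widetilde M$ and $\tilde a\in\widetilde A_{\tilde x}$, take a local section of $\widetilde A$ through $\tilde a$, lift it through $\Pi$ over the $\pi$-preimage of a small neighbourhood of $\tilde x$ (possible since $\Pi$ is fibre-wise surjective over the submersion $\pi$), and multiply by the pull-back of a bump function; this yields $X\in\Gamma_p(A)$ whose projection $\widetilde X$ satisfies $\widetilde X(\tilde x)=\tilde a$, so projectable sections project onto local frames of $\widetilde A$. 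The second is that \emph{the anchor descends}: for $X\in\Gamma_p(A)$ the vector field $\rho_A(X)$ is $\pi$-projectable, and for $V\in\Gamma(\ker\Pi)$ the vector field $\rho_A(V)$ is tangent to the fibres of $\pi$. Indeed, if $f=\widetilde f\smc\pi$ is $\pi$-basic and $Y\in\Gamma_p(A)$, then $fY\in\Gamma_p(A)$, so by $i)$ the Leibniz identity $\rho_A(X)(f)\,Y=\brr{X,fY}_A-f\brr{X,Y}_A$ shows $\rho_A(X)(f)\,Y\in\Gamma_p(A)$; evaluating on each fibre at a point where some projectable $Y$ has non-vanishing image (available by the first fact) forces $\rho_A(X)(f)$ to be constant along fibres, i.e.\ $\pi$-basic, and letting $f$ range over basic functions this says precisely that $\rho_A(X)$ is $\pi$-projectable. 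The same computation with $V\in\Gamma(\ker\Pi)$ in place of $X$ and $ii)$ in place of $i)$ gives $\rho_A(V)(f)=0$ for all basic $f$, i.e.\ $\rho_A(V)$ is vertical.

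Granting these two facts, I would construct the structure on $\widetilde A$ as follows. Define $\rho_{\widetilde A}\colon\widetilde A\to T\widetilde M$ by $\rho_{\widetilde A}(\Pi_x a)=T_x\pi(\rho_A a)$; this is well defined on each fibre by verticality (if $\Pi_x a=\Pi_x a'$, extend $a-a'$ to a section of $\ker\Pi$) and independent of the chosen preimage $x$ of $\tilde x$, because a projectable $X$ with $\widetilde X(\tilde x)=\Pi_x a$ has $\rho_A(X)$ $\pi$-related to a single vector field on $\widetilde M$. For the bracket, if $X,Y\in\Gamma_p(A)$ cover $\widetilde X,\widetilde Y$, set $\brr{\widetilde X,\widetilde Y}_{\widetilde A}$ to be the projection of $\brr{X,Y}_A$, which is legitimate by $i)$ and independent of the lifts, since changing $X$ to $X'$ makes $X-X'$ a section of $\ker\Pi$ and then $ii)$ makes $\brr{X-X',Y}_A$ a section of $\ker\Pi$. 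Extend $\br_{\widetilde A}$ to arbitrary sections by forcing the Leibniz rule with respect to $\rho_{\widetilde A}$, using the local frames of projectable sections; consistency of this extension is routine. Testing on such local frames and propagating by Leibniz, $\br_{\widetilde A}$ is antisymmetric and satisfies the Jacobi identity (inherited from $A$ via the fibre-wise surjectivity of $\Pi$ and the surjectivity of $\pi$), so $(\widetilde A,\br_{\widetilde A},\rho_{\widetilde A})$ is a Lie algebroid; by construction $(\Pi,\pi)$ intertwines anchors and sends $\br_A$ of projectable sections to $\br_{\widetilde A}$ of their images, which is equivalent to the morphism condition \eqref{eq:Lie:alg:morphism}. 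Uniqueness is forced: any such structure must satisfy $\rho_{\widetilde A}\smc\Pi=T\pi\smc\rho_A$ by \eqref{eq:morphism:anchors} and must send $\br_A$ of projectable sections onto $\br_{\widetilde A}$ of their images, and by the first fact these data determine $\rho_{\widetilde A}$ and $\br_{\widetilde A}$ completely.

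The step I expect to be the main obstacle is the second preliminary fact: squeezing out of the Leibniz rule together with $i)$ and $ii)$ that the anchor $\rho_A$ passes to the quotient (projectable sections to $\pi$-projectable fields, $\ker\Pi$ to vertical fields). Once that is in hand the remainder is the standard, if lengthy, bookkeeping of transporting an algebraic structure along a fibre-wise surjective bundle map, the only other delicate point being the supply of projectable sections forming local frames.
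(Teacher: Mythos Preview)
The paper does not actually prove this proposition: it is stated with the attribution ``(see \cite{IMMMP})'' and only the resulting formulas \eqref{eq:projected:algebroid} are recorded, so there is no in-paper argument to compare your proposal against.

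That said, your proof is sound and follows the expected route for such descent results. The necessity direction is immediate from the properties of Lie algebroid morphisms already recalled in the paper. For sufficiency, your key step---extracting from the Leibniz rule and conditions $i)$, $ii)$ that $\rho_A$ sends projectable sections to $\pi$-projectable vector fields and sections of $\ker\Pi$ to vertical fields---is exactly the non-obvious point, and your argument via $\rho_A(X)(f)\,Y\in\Gamma_p(A)$ (respectively $\Gamma(\ker\Pi)$) for basic $f$ is correct. The remaining construction of $(\br_{\widetilde A},\rho_{\widetilde A})$ and the uniqueness claim are routine once local frames of projectable sections are available, which you also address. One small stylistic remark: the well-definedness of $\rho_{\widetilde A}$ over different preimages $x,x'$ of $\tilde x$ can be argued more directly by noting that a projectable $X$ through $a$ has $\rho_A(X)$ $\pi$-related to a fixed vector field, rather than invoking the extension of $a-a'$ to a section of $\ker\Pi$ (which handles ambiguity within a single fibre but not across fibres); you do say this, but the two issues are slightly conflated in your wording.
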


In such a  case, the structure of Lie algebroid over $\widetilde{A}$
is characterized by
\begin{equation}\label{eq:projected:algebroid}
[\widetilde{X}, \widetilde{Y}]_{\widetilde{A}}\smc \pi=
\Pi\smc\brr{X,Y}_A, \qquad \rho_{\widetilde{A}}(\widetilde{X})
(\widetilde{f})\smc \pi= \rho_A(X)(\widetilde{f}\smc \pi),
\end{equation}
where $\widetilde{X}, \widetilde{Y}\in\Gamma(\widetilde{A})$,
$\widetilde{f}\in C^\infty(\widetilde{M})$ and $X,Y\in \Gamma(A)$
are such that
\[
\widetilde{X}\smc \pi=\Pi\smc X, \qquad \widetilde{Y}\smc \pi=\Pi \smc Y.
\]
Note that the real function $\rho_A(X)(\widetilde{f}\smc \pi)$ on $M$ is basic with respect to $\pi$ (see \cite{IMMMP}).

Let $(A, \br_A,\rho_A)$ and $(\widetilde{A},
\br_{\widetilde{A}},\rho_{\widetilde{A}})$ be Lie algebroids over
$M$ and $\widetilde{M}$, respectively, and let $(\Pi,\pi):A \to
\widetilde{A}$ be an epimorphism of Lie algebroids. We denote by
$V\pi$ the vertical subbundle of $\pi\colon M \to \widetilde{M}$.
Then, $\rho_A(Ker \Pi)\subseteq V\pi$ (see
(\ref{eq:morphism:anchors})).

We can always find a local basis $\{\xi_i,X_a\}$ of sections of
$A$ such that $\xi_i\in\Gamma(Ker \Pi),$  for all $i$, and  $X_a$ is a
$\Pi$-projectable section, for all $a$. Indeed, to obtain such a base we
choose a bundle  metric on $A$ which gives us the decomposition
$A=Ker\Pi\oplus (Ker\Pi)^\perp$ where $(Ker\Pi)^\perp$ is the
orthogonal complement defined by the chosen metric. Then we
consider a local basis $\{\xi_i\}$ of sections of $Ker\Pi$ and a local basis
$\{\widetilde{X}_a\}$ of sections of $\widetilde{A}$. It follows
that $\{\xi_i,X_a=\widetilde{X}_a^H \}$, where $\widetilde{X}_a^H$
is the horizontal lift of $\widetilde{X}_a$, is a local basis of sections
of $A$. Furthermore, note that if $\{\eta_i,\al_a\}$ is the dual
basis of $\{\xi_i,X_a\}$, then $\al_a=\Pi^\ast\widetilde{\al}_a$,
where $\{\widetilde{\al}_a\}$ is the dual basis of
$\{\widetilde{X}_a \}$ in  $\widetilde{A}$. By using these tools
we can prove the following results about projectable sections of
$A$ and $A^\ast$.

\begin{prop}\label{prop:projectable:sections}
Let $(\Pi,\pi):A \to
\widetilde{A}$ be an epimorphism of Lie algebroids and suppose that $X\in \Gamma(A)$ and $\alpha\in \Gamma(A^*)$.  Then,
\begin{enumerate}
\item[i)] If $X$ is  a $\Pi$-projectable section of $A$, then $\brr{\xi,X}_A\in
\Gamma(Ker\Pi)$ for any $\xi\in \Gamma(Ker\Pi)$. Moreover, if
$\al$ is a $\Pi$-projectable section of $A^*$, then $\al(\xi)=0$ and
$\lie_{\xi}^A\al=0,$ for any $\xi\in \Gamma(Ker\Pi)$.
\item[ii)] Assume that $\rho_A(Ker \Pi) = V\pi$. Then,
\begin{enumerate}
\item[a)] $X$ is a $\Pi$-projectable section of $A$ if and only if $\brr{\xi,X}_A\in
\Gamma(Ker\Pi),$ for any $\xi\in \Gamma(Ker\Pi)$.
\item[b)] $\al$ is a $\Pi$-projectable section of $A^*$ if and only if $\al(\xi)=0$ and $\lie_{\xi}^A\al=0,$ for any $\xi\in
\Gamma(Ker\Pi)$.
\end{enumerate}
\end{enumerate}
\end{prop}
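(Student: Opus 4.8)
The plan is to work with the explicit local basis $\{\xi_i, X_a\}$ of $\Gamma(A)$ constructed just before the statement, where $\xi_i \in \Gamma(\Ker\Pi)$ and each $X_a$ is $\Pi$-projectable, together with the dual basis $\{\eta_i, \al_a\}$ satisfying $\al_a = \Pi^\ast\widetilde{\al}_a$. For part i) the implications are a direct consequence of what a Lie algebroid morphism does: if $X$ projects to $\widetilde{X}$, then for any $\xi \in \Gamma(\Ker\Pi)$ one has (using the remark that $[\,\cdot\,,\cdot\,]_A$ of projectable sections is projectable, with $\xi$ projecting to the zero section) that $[\xi, X]_A$ is $\Pi$-projectable onto $[0,\widetilde{X}]_{\widetilde{A}} = 0$, hence $[\xi,X]_A \in \Gamma(\Ker\Pi)$. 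For the covector statement, a $\Pi$-projectable $\al = \Pi^\ast\widetilde{\al}$ evaluated on $\xi \in \Gamma(\Ker\Pi)$ gives $\al(\xi) = \widetilde{\al}(\Pi\xi)\smc\pi = 0$. To get $\lie_\xi^A\al = 0$ I would use the Cartan formula \eqref{eq:Cartan:formula} and \eqref{wellknown}: for any projectable $X$ projecting to $\widetilde X$, the identity \eqref{eq:Lie:reduced} gives $\lie_\xi^A(\Pi^\ast\widetilde\al) = \Pi^\ast(\lie_{0}^{\widetilde A}\widetilde\al) = 0$, since $\xi$ projects to $0$; alternatively one checks $i_X(\lie_\xi^A\al) = \rho_A(\xi)(\al(X)) - \al([\xi,X]_A)$ vanishes on basis sections, because $\al(X)$ is $\pi$-basic and $\rho_A(\xi)$ is $\pi$-vertical, while $\al([\xi,X]_A) = 0$ as $[\xi,X]_A \in \Gamma(\Ker\Pi)$ by part i) when $X$ is projectable (and trivially when $X = \xi_j$). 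Evaluating against $\xi_j$ as well, $i_{\xi_j}(\lie_\xi^A\al) = \rho_A(\xi)(\al(\xi_j)) - \al([\xi,\xi_j]_A)$, and the first term vanishes since $\al(\xi_j) = 0$ and the second since $[\xi,\xi_j]_A \in \Gamma(\Ker\Pi)$ by hypothesis ii) of Proposition \ref{prop:epi}.

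For part ii) the forward implications are exactly i), so only the converses need work, and here the extra hypothesis $\rho_A(\Ker\Pi) = V\pi$ is essential. For ii)a), assume $[\xi, X]_A \in \Gamma(\Ker\Pi)$ for all $\xi \in \Gamma(\Ker\Pi)$; write $X = \sum f^a X_a + \sum g^i \xi_i$ in the local basis. The component $\sum g^i\xi_i$ lies in $\Gamma(\Ker\Pi)$ and is trivially handled, so by the Leibniz rule it suffices to show each $f^a$ is $\pi$-basic, i.e. $\rho_A(\xi)(f^a) = 0$ for all $\xi \in \Gamma(\Ker\Pi)$ — this, together with $X_a$ projectable, forces $X$ projectable. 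Computing $[\xi, X]_A$ modulo $\Gamma(\Ker\Pi)$ (using that $[\xi, X_a]_A \in \Gamma(\Ker\Pi)$ by i) and the Leibniz rule), the $(Ker\Pi)^\perp$-component is $\sum_a \rho_A(\xi)(f^a)\, X_a$, which must vanish; so $\rho_A(\xi)(f^a) = 0$ for every $\xi \in \Gamma(\Ker\Pi)$, and since $\rho_A(\Ker\Pi) = V\pi$ this says $df^a$ annihilates $V\pi$, i.e. $f^a$ is $\pi$-basic. Hence $X$ is $\Pi$-projectable.

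For ii)b), assume $\al(\xi) = 0$ and $\lie_\xi^A\al = 0$ for all $\xi \in \Gamma(\Ker\Pi)$. The condition $\al(\xi_i) = 0$ means $\al = \sum_a h^a \al_a = \sum_a h^a \Pi^\ast\widetilde{\al}_a$ for functions $h^a \in C^\infty(M)$; I want each $h^a$ to be $\pi$-basic, for then $\al = \Pi^\ast(\sum_a (h^a\smc\pi^{-1})\,\widetilde\al_a)$ locally — more precisely $h^a$ basic means $h^a = \widetilde h^a\smc\pi$ and $\al = \Pi^\ast(\sum \widetilde h^a \widetilde\al_a)$ — so $\al$ is $\Pi$-projectable. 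To extract this, evaluate $0 = \lie_\xi^A\al$ on a projectable section $X_b$: using \eqref{wellknown}, $0 = (\lie_\xi^A\al)(X_b) = \rho_A(\xi)(\al(X_b)) - \al([\xi, X_b]_A)$. Now $\al(X_b) = h^b$ and $[\xi, X_b]_A \in \Gamma(\Ker\Pi)$ by i), so $\al([\xi,X_b]_A) = \sum_a h^a \al_a([\xi,X_b]_A) = 0$ because $[\xi,X_b]_A$ has no $(Ker\Pi)^\perp$-component. Thus $\rho_A(\xi)(h^b) = 0$ for all $\xi \in \Gamma(\Ker\Pi)$, and again $\rho_A(\Ker\Pi) = V\pi$ gives $h^b$ $\pi$-basic, so $\al$ is $\Pi$-projectable. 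The one subtlety — and the main point to be careful with — is the quantifier in ``$h^a$ basic $\Rightarrow$ $\al$ projectable globally'': one argues locally with the chosen basis and then notes projectability is a local condition, or equivalently checks directly that $F^\ast\widetilde\al = \al$ for the globally well-defined $\widetilde\al$ whose local expression is $\sum \widetilde h^a\widetilde\al_a$. I expect the main obstacle to be bookkeeping the $\Ker\Pi$ versus $(Ker\Pi)^\perp$ decompositions cleanly so that the vanishing of the transverse component is visibly equivalent to the functions being basic; everything else is the two Cartan-calculus identities \eqref{eq:Cartan:formula} and \eqref{wellknown} applied on basis sections.
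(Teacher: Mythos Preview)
Your proposal is correct and follows essentially the same approach as the paper's proof: part i) via the ideal property of $\Gamma(\Ker\Pi)$ in $\Gamma_p(A)$ (Proposition \ref{prop:epi}) together with \eqref{eq:Lie:reduced}, and part ii) via the local basis $\{\xi_i,X_a\}$, extracting the transverse coefficients and showing they are $\pi$-basic using $\rho_A(\Ker\Pi)=V\pi$. The paper's write-up is slightly terser but the computations and key identities are the same as yours.
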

\begin{proof}
The first part of $i)$ is a consequence of Proposition
\ref{prop:epi}.

Assume that there exists $\widetilde{\al}\in
\Gamma(\widetilde{A}^\ast)$ such that
$\al=\Pi^\ast\widetilde{\al}$. If $\xi\in \Gamma(Ker \Pi)$ then
$\al(\xi)=\Pi^\ast\widetilde{\al}(\xi)=0$ and, by using
\eqref{eq:Lie:reduced},
\[
\lie_{\xi}^A {\al} =\lie _{\xi}^A \Pi^\ast
\widetilde{\al}=0.
\]

To prove $ii)$ we proceed as follows. Let $\{\xi_i,X_a\}$ be a local
basis of sections of $A$ such that $\xi_i\in\Gamma(Ker \Pi),$ for all $i$, and $X_a$ is a $\Pi$-projectable section over $\widetilde{X}_a\in \Gamma(\widetilde{A})$ for all $a.$
\begin{enumerate}
\item[$a)$] Suppose that $X\in \Gamma(A)$ is such that $[\xi,X]_A\in \Gamma(\ker \Pi),$ for any $\xi\in \Gamma(\ker\Pi).$ If
\[
X=f^i\xi_i+F^a X_a\qquad \mbox{with }f^i,F^a \mbox{ local $C^\infty$-functions on } M
\]
then, by using Proposition
\ref{prop:epi}, we have that
\[
0=\Pi\smc\brr{\xi_i,X}_A=\Pi\smc(\rho_A(\xi_i)(F^a) X_a)=\rho_A(\xi_i)(F^a)(\widetilde{X}_a\smc \pi).
\]
So, if $Z\in V_x\pi$, with $x\in M,$ then there exists $\xi\in\Gamma(Ker \Pi)$
such that $Z=\rho_A(\xi)(x)$ and therefore
\[
Z(F^a)=\rho_A(\xi)(F^a)(x)=0.
\]
We conclude that there exists $\widetilde{F}^a\in
C^\infty(\widetilde{M})$ such that
\[
F^a=\widetilde{F}^a\smc\pi,
\]
and $X$ is a $\Pi$-projectable section of $A$.

\item[$b)$] Assume  that $\al$ is a section of $A^\ast$ such that $\al(\xi)=0$ and $\lie_{\xi}^A\al=0$, for any $\xi\in
\Gamma(Ker\Pi)$. Let $\{\eta_i,\Pi^\ast\widetilde{\al}_a\}$ be the
dual basis of $\{\xi_i,X_a\}$. Thus,
\[
\al=g^i\eta_i + \sigma^a \Pi^\ast\widetilde{\al}_a, \qquad
\mbox{with }g^i,\sigma^a\in C^\infty(M).
\]

As $\al(\xi_i)=0$, we deduce that $g^i=0$. On the other hand, using (\ref{wellknown}) and Proposition \ref{prop:epi},
\[
0=\lie^A_{\xi_i}\al(X_a)=\rho_A(\xi_i)(\sigma^a)-
\al(\brr{\xi_i,X_a}_A) =\rho_A(\xi_i)(\sigma^a).
\]
As before, this implies that
$\sigma^a=\widetilde{\sigma}^a\smc\pi$ for some function
$\widetilde{\sigma}^a\in C^\infty(\widetilde{M})$. Hence, $\al$ is
$\Pi$-projectable.
\end{enumerate}
\end{proof}

We consider now a section $P$ of the vector bundle $\wedge^2A \to M$.
$P$ is said to be $\Pi$-\emph{projectable} if,  for each
$\widetilde{\al}\in\Gamma(\widetilde{A}^\ast),$ we have
$P^\sharp\Pi^\ast\widetilde{\al}\in\Gamma_p(A)$.

\begin{prop}\label{prop:projectable:bivector}
Let $(\Pi,\pi):A \to
\widetilde{A}$ be an epimorphism of Lie algebroids.
If $P\in\Gamma(\wedge^2 A)$ is $\Pi$-projectable, then
\begin{equation}\label{eq:lie:P}
([\xi,P]_A)^\sharp(\Gamma_p(A^\ast))\subseteq\Gamma(Ker\Pi)
\end{equation}
for any $\xi\in\Gamma(Ker\Pi)$. Moreover, if
$\rho_A(Ker\Pi)=V\pi$, then $P$ is $\Pi$-projectable if and only
if \eqref{eq:lie:P} holds.
\end{prop}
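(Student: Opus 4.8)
The plan is to prove the two directions separately, using the local basis $\{\xi_i,X_a\}$ with $\xi_i\in\Gamma(\Ker\Pi)$ and $X_a$ $\Pi$-projectable, together with its dual basis $\{\eta_i,\al_a=\Pi^\ast\widetilde{\al}_a\}$, as set up just before Proposition~\ref{prop:projectable:sections}. First I would establish the forward implication, which holds with no assumption on $\rho_A(\Ker\Pi)$. Assume $P$ is $\Pi$-projectable. Fix $\xi\in\Gamma(\Ker\Pi)$ and a $\Pi$-projectable $\al\in\Gamma_p(A^\ast)$; I want to show $([\xi,P]_A)^\sharp(\al)\in\Gamma(\Ker\Pi)$, i.e.\ that $\eta_i\big(([\xi,P]_A)^\sharp\al\big)=([\xi,P]_A)(\eta_i,\al)=0$ for all $i$. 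Using the Schouten-bracket formula \eqref{9'} for the bracket of the section $\xi$ with the bivector $P$,
\[
[\xi,P]_A(\eta_i,\al)=\rho_A(\xi)\big(P(\eta_i,\al)\big)-P(\lie^A_\xi\eta_i,\al)-P(\eta_i,\lie^A_\xi\al).
\]
By Proposition~\ref{prop:projectable:sections}(i), since $\al$ is $\Pi$-projectable and $\xi\in\Gamma(\Ker\Pi)$, we have $\lie^A_\xi\al=0$, killing the last term. For the first term, note $P(\eta_i,\al)=-\eta_i(P^\sharp\al)$ and $P^\sharp\al=P^\sharp\Pi^\ast\widetilde{\al}\in\Gamma_p(A)$ by $\Pi$-projectability of $P$; writing this projectable section in the basis, its $\xi_i$-coefficients are $\pi$-basic functions, so $\rho_A(\xi)$ applied to $P(\eta_i,\al)$ — which, since $\al$ is projectable, can be rearranged to an expression in basic functions and projectable data — vanishes. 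The remaining middle term $P(\lie^A_\xi\eta_i,\al)$ I would handle by expanding $\lie^A_\xi\eta_i$ in the dual basis and using \eqref{wellknown}: the coefficients are among $\lie^A_\xi\eta_i(\xi_j)=-\eta_i([\xi,\xi_j]_A)$ and $\lie^A_\xi\eta_i(X_a)=-\eta_i([\xi,X_a]_A)$, the latter lying in $\Ker\Pi$ by Proposition~\ref{prop:projectable:sections}(i), so pairing against $\al$ (which vanishes on $\Ker\Pi$) gives zero, and the surviving piece combines with the first term. The cleanest route, which I would actually write, is to compute $([\xi,P]_A)(\al_a,\al_b)$ directly for projectable $\al_a,\al_b$: by \eqref{9'} this equals $\rho_A(\xi)(P(\al_a,\al_b))-P(\lie^A_\xi\al_a,\al_b)-P(\al_a,\lie^A_\xi\al_b)$, and the last two terms vanish by Proposition~\ref{prop:projectable:sections}(i) while the first vanishes because $P(\al_a,\al_b)$ is the pairing of a projectable section with a projectable covector, hence a $\pi$-basic function, and $\rho_A(\xi)$ of a basic function is zero; this shows $([\xi,P]_A)^\sharp(\al_b)$ is annihilated by every projectable $\al_a$, hence lies in $\Ker\Pi$ (since the $\al_a$ span a complement to $\Ker\Pi^\ast$, equivalently their common annihilator in $A$ is $\Ker\Pi$).

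For the converse, assume $\rho_A(\Ker\Pi)=V\pi$ and that \eqref{eq:lie:P} holds; I want to conclude $P$ is $\Pi$-projectable, i.e.\ $P^\sharp\Pi^\ast\widetilde{\al}\in\Gamma_p(A)$ for every $\widetilde{\al}\in\Gamma(\widetilde{A}^\ast)$. By Proposition~\ref{prop:projectable:sections}(ii)(a), this is equivalent to showing $[\xi,P^\sharp\Pi^\ast\widetilde{\al}]_A\in\Gamma(\Ker\Pi)$ for all $\xi\in\Gamma(\Ker\Pi)$. The key identity, which I would verify by a short direct computation from \eqref{9'} and the definition \eqref{eq:P:sharp} of $P^\sharp$, is
\[
\brr{\xi,P^\sharp\beta}_A=([\xi,P]_A)^\sharp(\beta)+P^\sharp(\lie^A_\xi\beta),\qquad \beta\in\Gamma(A^\ast),
\]
which is just the Leibniz rule for the Schouten bracket applied to the contraction $P^\sharp\beta=i_\beta P$. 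Taking $\beta=\Pi^\ast\widetilde{\al}$: the first term on the right lies in $\Gamma(\Ker\Pi)$ by the hypothesis \eqref{eq:lie:P} (since $\Pi^\ast\widetilde{\al}\in\Gamma_p(A^\ast)$), and the second term vanishes outright because $\lie^A_\xi\Pi^\ast\widetilde{\al}=0$ by \eqref{eq:Lie:reduced}. Hence $[\xi,P^\sharp\Pi^\ast\widetilde{\al}]_A\in\Gamma(\Ker\Pi)$, and Proposition~\ref{prop:projectable:sections}(ii)(a) gives $\Pi$-projectability of $P$.

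I expect the main obstacle to be pinning down the first-term computation in the forward direction — showing $\rho_A(\xi)\big(P(\al_a,\al_b)\big)=0$ cleanly. The substance is that $P(\al_a,\al_b)=P(\Pi^\ast\widetilde{\al}_a,\Pi^\ast\widetilde{\al}_b)$ must be recognized as a $\pi$-basic function: this follows because $P^\sharp\Pi^\ast\widetilde{\al}_b$ is projectable (hypothesis on $P$) and its pairing with the projectable covector $\Pi^\ast\widetilde{\al}_a$ is basic, exactly as in the remark after \eqref{eq:projected:algebroid} on $\rho_A(X)(\widetilde f\smc\pi)$ being basic; but making this last point rigorous without circularity requires care in which direction one argues. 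A safe alternative I would keep in reserve is to avoid basic functions entirely and instead test $([\xi,P]_A)^\sharp$ against the $\eta_i$, expanding everything in the local frame and using only Proposition~\ref{prop:projectable:sections}(i) plus the Leibniz identity above; that reduces the whole statement to bookkeeping with the structure functions of the frame. Either way the two displayed identities — the Schouten–Leibniz rule for $\brr{\xi,P^\sharp\beta}_A$ and the vanishing $\lie^A_\xi\Pi^\ast\widetilde{\al}=0$ — are the engine, and everything else is routine frame manipulation.
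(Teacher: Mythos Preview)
Your proof is correct, but you and the paper essentially swap tools between the two directions. For the forward implication the paper uses precisely the Leibniz identity you write down for the converse, $[\xi,P^\sharp\al]_A=([\xi,P]_A)^\sharp(\al)+P^\sharp(\lie^A_\xi\al)$: since $\al\in\Gamma_p(A^\ast)$ one has $\lie^A_\xi\al=0$, and since $P$ is $\Pi$-projectable $P^\sharp\al\in\Gamma_p(A)$, so $[\xi,P^\sharp\al]_A\in\Gamma(\Ker\Pi)$ by Proposition~\ref{prop:projectable:sections}(i); this finishes the forward direction in one line. Your direct computation of $([\xi,P]_A)(\al_a,\al_b)$ via \eqref{9'} works too---your verification that $P(\al_a,\al_b)$ is $\pi$-basic is not circular, and $\rho_A(\Ker\Pi)\subseteq V\pi$ always holds by \eqref{eq:morphism:anchors}---but it is longer than necessary given that you already have the key identity in hand. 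Conversely, the paper proves the reverse implication by writing $P^\sharp\Pi^\ast\widetilde{\al}_a=f^i_a\xi_i+F^b_a X_b$ in the local frame and showing directly that $\rho_A(\xi)(F^b_a)=0$, hence $F^b_a$ is $\pi$-basic; your argument via the Leibniz identity together with Proposition~\ref{prop:projectable:sections}(ii)(a) is shorter and coordinate-free. The most economical proof would therefore combine the paper's forward step with your converse.
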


\begin{proof}

Assume that $P$ is $\Pi$-projectable. Then, for any $\al\in
\Gamma_p(A^\ast)$ and $\xi\in\Gamma(Ker\Pi)$, by using
\eqref{eq:P:sharp} and Proposition \ref{prop:projectable:sections}
we have
\begin{align*}
([\xi, P]_A)^\sharp (\al)&=[\xi, P^\sharp \al]_A- P^\sharp
\lie_\xi^A \al\\
&=[\xi, P^\sharp \al]_A \in\Gamma(Ker \Pi).
\end{align*}

Now, we suppose that $P$ satisfies \eqref{eq:lie:P} and
$\rho_A(Ker\Pi)=V\pi$. Consider a local basis of sections
$\{\xi_i,X_a\}$ of $A$ such that $\xi_i\in\Gamma(Ker \Pi)$ and
$X_a\in\Gamma_p(A)$. Let $\{\eta_i,\Pi^\ast\widetilde{\al}_a\}$ be
the dual basis of $\{\xi_i,X_a\}$. We have
\[
P^\sharp \Pi^\ast\widetilde{\al}_a=f^i_a\xi_i+F^b_a X_b,\qquad
\mbox{with } f^i_a,F^b_a\mbox{ local real  $C^\infty$-functions on $M.$}
\]
Note that $F^b_a=-F^a_b$.

By using Proposition \ref{prop:projectable:sections} we have
\begin{align*}
0=&\Pi\smc ( ([\xi, P]_A)^\sharp (\Pi^\ast\widetilde{\al}_a)
)(\widetilde{\al}_b)=([\xi, P]_A)(\Pi^\ast\widetilde{\al}_a,
\Pi^\ast\widetilde{\al}_b)\\
&= \rho_A(\xi)(P(\Pi^\ast\widetilde{\al}_a,
\Pi^\ast\widetilde{\al}_b)) = \rho_A(\xi)(F_a^b),
\end{align*}
for any $\xi\in\Gamma(Ker\Pi)$.

So, if $Z\in V_x\pi$, then there exists $\xi\in\Gamma(Ker
\Pi)$  such that $Z=\rho_A(\xi)(x)$ and therefore
\[
Z(F_a^b)=0.
\]
Hence, there exists a local real $C^\infty$-function $\widetilde{F_a^b}$ on $\widetilde{M}$
such that
\[
F_a^b=\widetilde{F_a^b}\smc\pi.
\]
\end{proof}

If $P$ is a $\Pi$-projectable Poisson structure on $A$, then we
may construct the $2$-section $\widetilde{P}\in\Gamma(\wedge^2
\widetilde{A})$ of $\widetilde{A}$ characterized by
\begin{equation}\label{eq:projection:Poisson:tensor}
(\widetilde{P}^\sharp\widetilde{\al})\smc \pi= \Pi(P^\sharp
(\Pi^\ast\widetilde{\al})), \mbox{ for any } \widetilde{\al}\in\Gamma(\widetilde{A}^\ast)
\end{equation}
or equivalently,
\begin{equation}\label{31'}
\widetilde{P}(\widetilde{\alpha},\widetilde{\beta})\smc \pi=P(\Pi^*\widetilde{\alpha},\Pi^*\widetilde{\beta}), \mbox{ for any } \widetilde{\al},\widetilde{\be}\in\Gamma(\widetilde{A}^\ast).
\end{equation}

\begin{prop}\label{prop:projectable:Poisson}
Let $(\Pi,\pi):A \to
\widetilde{A}$ be an epimorphism of Lie algebroids. If $P$ is a $\Pi$-projectable Poisson structure on $A$, then
$\widetilde{P}$ is a Poisson structure on $\widetilde{A}$.
\end{prop}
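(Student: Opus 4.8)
The plan is to verify that $[\widetilde P,\widetilde P]_{\widetilde A}=0$ by pulling the computation back to $A$ via the epimorphism $(\Pi,\pi)$, where we know $[P,P]_A=0$. The key observation is that $\Pi$ is surjective on both the fibres of the bundle and on the base, so a section of $\wedge^3\widetilde A$ vanishes if and only if its ``pullback'' evaluated on projectable sections vanishes; more precisely, for $\widetilde\alpha,\widetilde\beta,\widetilde\gamma\in\Gamma(\widetilde A^\ast)$ the function $[\widetilde P,\widetilde P]_{\widetilde A}(\widetilde\alpha,\widetilde\beta,\widetilde\gamma)\smc\pi$ is determined on all of $M$, and vanishes everywhere iff $[\widetilde P,\widetilde P]_{\widetilde A}=0$ since $\pi$ is surjective. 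So it suffices to show $[\widetilde P,\widetilde P]_{\widetilde A}(\widetilde\alpha,\widetilde\beta,\widetilde\gamma)\smc\pi=0$.

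First I would recall the standard description of the Schouten square of a bivector in terms of the Koszul bracket: for $\widetilde\alpha,\widetilde\beta,\widetilde\gamma\in\Gamma(\widetilde A^\ast)$ one has, up to a universal constant and cyclic sum,
\[
[\widetilde P,\widetilde P]_{\widetilde A}(\widetilde\alpha,\widetilde\beta,\widetilde\gamma)
= \oint \Big( \widetilde P\big([\widetilde\alpha,\widetilde\beta]_{\widetilde P},\widetilde\gamma\big) - \text{(lower order terms that also only involve $[\cdot,\cdot]_{\widetilde P}$, $d^{\widetilde A}$, $\widetilde P^\sharp$)}\Big),
\]
and an identical formula holds for $[P,P]_A$ in terms of $[\cdot,\cdot]_P$, $d^A$, $P^\sharp$. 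The strategy is then to show that each building block of the $\widetilde A$-side formula, pulled back along $\pi$ (resp.\ $\Pi^\ast$), equals the corresponding building block of the $A$-side formula applied to $\Pi^\ast\widetilde\alpha,\Pi^\ast\widetilde\beta,\Pi^\ast\widetilde\gamma$. Concretely: (1) $\widetilde P(\widetilde\alpha,\widetilde\beta)\smc\pi = P(\Pi^\ast\widetilde\alpha,\Pi^\ast\widetilde\beta)$ is exactly \eqref{31'}; (2) $\Pi^\ast(d^{\widetilde A}\widetilde f) = d^A(\widetilde f\smc\pi)$ and more generally $\Pi^\ast\smc d^{\widetilde A} = d^A\smc\Pi^\ast$ because $(\Pi,\pi)$ is a Lie algebroid morphism, cf.\ \eqref{eq:Lie:alg:morphism}; (3) the Koszul bracket is compatible with $\Pi^\ast$, i.e.\ $\Pi^\ast[\widetilde\alpha,\widetilde\beta]_{\widetilde P} = [\Pi^\ast\widetilde\alpha,\Pi^\ast\widetilde\beta]_P$. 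Item (3) is the technical heart: using the defining formula \eqref{eq:Koszul:bracket} for each bracket, it reduces to (2) together with the identity $P^\sharp\Pi^\ast\widetilde\alpha$ being $\Pi$-projectable over $\widetilde P^\sharp\widetilde\alpha$ — which is precisely the definition of $\Pi$-projectability of $P$ combined with \eqref{eq:projection:Poisson:tensor} — and the compatibility \eqref{eq:Lie:reduced} of the Lie derivative with $\Pi^\ast$ along projectable sections (applicable since $P^\sharp\Pi^\ast\widetilde\beta\in\Gamma_p(A)$).

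Once (1)--(3) are in place, assembling them gives
\[
[\widetilde P,\widetilde P]_{\widetilde A}(\widetilde\alpha,\widetilde\beta,\widetilde\gamma)\smc\pi
= [P,P]_A(\Pi^\ast\widetilde\alpha,\Pi^\ast\widetilde\beta,\Pi^\ast\widetilde\gamma) = 0,
\]
and surjectivity of $\pi$ finishes the proof. The main obstacle I anticipate is bookkeeping in step (3): one must be careful that in the definition $[\widetilde\alpha,\widetilde\beta]_{\widetilde P} = \lie^{\widetilde A}_{\widetilde P^\sharp\widetilde\alpha}\widetilde\beta - \lie^{\widetilde A}_{\widetilde P^\sharp\widetilde\beta}\widetilde\alpha - d^{\widetilde A}(\widetilde P(\widetilde\alpha,\widetilde\beta))$ the vector field acting is $\widetilde P^\sharp\widetilde\alpha$, whose pullback involves $\Pi$-projectability of $P$ in an essential way, so \eqref{eq:Lie:reduced} can only be invoked after identifying $P^\sharp\Pi^\ast\widetilde\alpha$ as a $\Pi$-projectable lift of $\widetilde P^\sharp\widetilde\alpha$. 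A secondary point is that one should check the Koszul bracket of projectable $1$-forms is again projectable (so that iterating the bracket, as the Schouten square formula requires, stays within the projectable framework); this follows from Proposition~\ref{prop:projectable:sections}(ii)(b) once one verifies the annihilation conditions, or more directly from (3) itself since $[\Pi^\ast\widetilde\alpha,\Pi^\ast\widetilde\beta]_P = \Pi^\ast[\widetilde\alpha,\widetilde\beta]_{\widetilde P}$ is manifestly of the form $\Pi^\ast(\text{something})$.
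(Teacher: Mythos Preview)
Your proposal is correct and follows the same overall strategy as the paper: pull the vanishing of the Schouten square back to $A$ by showing that the ingredients ($\widetilde P^\sharp$, $d^{\widetilde A}$, the Lie derivative along $\widetilde P^\sharp\widetilde\alpha$) are all $\Pi$-related to their counterparts on $A$, then use $[P,P]_A=0$ and surjectivity of $\pi$.

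The one organizational difference worth flagging is the choice of identity for the Schouten square. The paper does not evaluate $[\widetilde P,\widetilde P]_{\widetilde A}$ on three $1$-forms via the Koszul bracket; instead it uses the single-contraction formula
\[
\tfrac12\, i_{\widetilde\alpha}[\widetilde P,\widetilde P]_{\widetilde A}
= -\widetilde P^\sharp(d^{\widetilde A}\widetilde\alpha) + [\widetilde P^\sharp\widetilde\alpha,\widetilde P]_{\widetilde A},
\]
and then checks separately that $\wedge^2\Pi\smc P^\sharp(d^A\Pi^\ast\widetilde\alpha) = \widetilde P^\sharp(d^{\widetilde A}\widetilde\alpha)\smc\pi$ and $\wedge^2\Pi\smc[P^\sharp\Pi^\ast\widetilde\alpha,P]_A = [\widetilde P^\sharp\widetilde\alpha,\widetilde P]_{\widetilde A}\smc\pi$. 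This has the mild advantage of avoiding the explicit three-form formula you allude to (so nothing is left ``up to a universal constant and cyclic sum''), and it makes your secondary worry about iterating the Koszul bracket moot, since no iteration occurs. Conversely, your route isolates the compatibility $\Pi^\ast[\widetilde\alpha,\widetilde\beta]_{\widetilde P} = [\Pi^\ast\widetilde\alpha,\Pi^\ast\widetilde\beta]_P$ as an explicit lemma, which the paper only establishes later (in the proof of Theorem~\ref{thm:PN:epi}); proving it here is a clean way to package the argument and would let you shorten that later proof.
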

\begin{proof}
Let $\widetilde{\al}\in\Gamma(\widetilde{A}^\ast)$. Then, by using
\eqref{eq:P:sharp} one may prove that
\begin{equation}\label{eq:condition:poisson:tensor:tilde}
\frac{1}{2}
i_{\widetilde{\al}}[\widetilde{P},\widetilde{P}]_{\widetilde{A}}=
-\widetilde{P}^\sharp (\d^{\widetilde{A}}\widetilde{\al}) +
[\widetilde{P}^{\sharp} \widetilde{\al},
\widetilde{P}]_{\widetilde{A}}
\end{equation}
where $\widetilde{P}^\sharp (\d^{\widetilde{A}}\widetilde{\al})$
is the section of the vector bundle $\wedge^2\widetilde{A}\to \widetilde{M}$ defined by
\[
\widetilde{P}^\sharp (\d^{\widetilde{A}}\widetilde{\al})
(\widetilde{\be_1},\widetilde{\be_2}) = \d^{\widetilde{A}}
\widetilde{\al} (\widetilde{P}^\sharp \widetilde{\be}_1,
\widetilde{P}^\sharp \widetilde{\be}_2),
\]
for any $\widetilde{\be}_1, \widetilde{\be}_2
\in\Gamma(\widetilde{A}^\ast)$.

From the equality \eqref{eq:condition:poisson:tensor:tilde} for
the Poisson structure $P$ and the $1$-section $\Pi^*\widetilde{\al}$ of $A$, we deduce that
\begin{equation}\label{eq:condition:poisson:tensor}
P^\sharp (\d^A \Pi^\ast\widetilde{\al}) =
[P^\sharp(\Pi^\ast\widetilde{\al}), P]_A.
\end{equation}
On the other hand, from \eqref{eq:Lie:alg:morphism} and
\eqref{eq:projection:Poisson:tensor} we deduce that
\begin{equation}\label{eq:project:Pi:poisson:condition}
\wedge^2\Pi\smc P^\sharp\d^A \Pi^\ast\widetilde{\al} =
\widetilde{P}^\sharp \d^{\widetilde{A}} \widetilde{\al}\smc \pi.
\end{equation}
Projecting by $\Pi$, the equation
\eqref{eq:condition:poisson:tensor} and using
\eqref{eq:project:Pi:poisson:condition} we get
\begin{equation}\label{eq:Poisson:Lie:Poisson}
\widetilde{P}^\sharp \d^{\widetilde{A}} \widetilde{\al}\smc
\pi=\wedge^2\Pi\smc[P^\sharp \Pi^\ast\widetilde{\al},  P]_A.
\end{equation}
Since $(\Pi,\pi)$ is an epimorphism of Lie algebroids,  from
\eqref{eq:Lie:reduced} and \eqref{eq:projection:Poisson:tensor} we
also obtain
\begin{equation}\label{eq:lie:commutes:Pi}
\lie^A_{P^\sharp (\Pi^\ast\widetilde{\al})} (\Pi^\ast\widetilde{\be})
=\Pi^\ast(\lie^{\widetilde{A}}_{\widetilde{P}^\sharp
\widetilde{\al}} \widetilde{\be}) \qquad\mbox{for any }
\widetilde{\be}\in\Gamma(\widetilde{A}^\ast).
\end{equation}
\begin{equation}\label{36'}
\begin{array}{rcl}
\rho_A(P^\sharp(\Pi^*\widetilde{\al}))(\widetilde{f}\smc \pi)&=&\lie^A_{P^\sharp(\Pi^*\widetilde{\al})}(\widetilde{f}\smc \pi)=(\lie^{\widetilde{A}}_{\widetilde{P}^\sharp\widetilde{\al}}\widetilde{f})\smc \pi\\[5pt]&=&\rho_{\widetilde{A}}(\widetilde{P}^\sharp\widetilde{\al})(\widetilde{f})\smc \pi, \;\;\; \mbox{ with }\widetilde{f}\in C^\infty(\widetilde{M}).
\end{array}
\end{equation}
This fact allows us to prove that
\begin{equation}\label{eq:lie:projection:poisson}
\wedge^2\Pi\smc [P^\sharp(\Pi^\ast\widetilde{\al}), P]_A =
[\widetilde{P}^\sharp \widetilde{\al},
\widetilde{P}]_{\widetilde{A}} \smc \pi,
\end{equation}
by using \eqref{9'}.

From \eqref{eq:condition:poisson:tensor:tilde},
\eqref{eq:Poisson:Lie:Poisson} and
\eqref{eq:lie:projection:poisson} we deduce that
\[
i_{\widetilde{\al}}[\widetilde{P},\widetilde{P}]_{\widetilde{A}}=0,
\]
for any $\widetilde{\al}\in\Gamma(\widetilde{A}^\ast)$. In
conclusion $\widetilde{P}$ is a Poisson structure.
\end{proof}

Assume that $N\colon A\to A$ is a Nijenhuis operator on $A$. 
 $N$ is said to be $\Pi$-\emph{projectable} if
\[
N(\Gamma_p(A))\subseteq \Gamma_p(A) \quad \mbox{and} \quad
N(\Gamma(Ker\Pi))\subseteq \Gamma(Ker\Pi).
\]

\begin{prop}\label{prop:projectable:N}
Let $(\Pi,\pi):A \to
\widetilde{A}$ be an epimorphism of Lie algebroids.
If $N$ is a $\Pi$-projectable Nijenhuis operator on $A$, then
\begin{equation}\label{eq:lie:N}
\lie^A_{\xi} N(\Gamma_p(A))\subseteq \Gamma(Ker\Pi) \quad
\mbox{and} \quad N(\Gamma(Ker\Pi))\subseteq \Gamma(Ker\Pi),
\end{equation}
for any $\xi\in\Gamma(Ker\Pi)$. Moreover, if
$\rho_A(Ker\Pi)=V\pi$, then $N$ is $\Pi$-projectable if and only
if \eqref{eq:lie:N} holds.
\end{prop}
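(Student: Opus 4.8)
The plan is to derive both inclusions in \eqref{eq:lie:N} from the single tensorial identity
\[
(\lie^A_\xi N)(X)=\brr{\xi,NX}_A-N\brr{\xi,X}_A ,\qquad \xi\in\Gamma(Ker\Pi),\ X\in\Gamma(A),
\]
which merely expresses that $\lie^A_\xi N$ is the Lie derivative along $\xi$ of the $(1,1)$-tensor $N$, together with the characterization of $\Pi$-projectable sections of $A$ given in Proposition~\ref{prop:projectable:sections}. This is the exact analogue, for the recursion operator, of what was done for the bivector $P$ in Propositions~\ref{prop:projectable:bivector} and~\ref{prop:projectable:Poisson}; the Nijenhuis condition $\T_N=0$ will not be used here.

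First I would prove \eqref{eq:lie:N} assuming that $N$ is $\Pi$-projectable. The inclusion $N(\Gamma(Ker\Pi))\subseteq\Gamma(Ker\Pi)$ is part of the definition of projectability, so only the first inclusion needs an argument. Fix $X\in\Gamma_p(A)$ and $\xi\in\Gamma(Ker\Pi)$. Since $N$ is $\Pi$-projectable, $NX\in\Gamma_p(A)$; hence Proposition~\ref{prop:projectable:sections}~$i)$, applied to the projectable sections $X$ and $NX$, yields $\brr{\xi,X}_A\in\Gamma(Ker\Pi)$ and $\brr{\xi,NX}_A\in\Gamma(Ker\Pi)$. Applying $N$ to the first membership and using $N(\Gamma(Ker\Pi))\subseteq\Gamma(Ker\Pi)$ gives $N\brr{\xi,X}_A\in\Gamma(Ker\Pi)$, and therefore $(\lie^A_\xi N)(X)=\brr{\xi,NX}_A-N\brr{\xi,X}_A\in\Gamma(Ker\Pi)$. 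This establishes \eqref{eq:lie:N}; in particular it gives the ``only if'' direction of the stated equivalence, which holds without any hypothesis on $\rho_A(Ker\Pi)$.

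For the converse I would assume $\rho_A(Ker\Pi)=V\pi$ and that \eqref{eq:lie:N} holds, and show that $N$ is $\Pi$-projectable. Once more $N(\Gamma(Ker\Pi))\subseteq\Gamma(Ker\Pi)$ is already part of \eqref{eq:lie:N}, so it remains to prove $N(\Gamma_p(A))\subseteq\Gamma_p(A)$. Let $X\in\Gamma_p(A)$ and let $\xi\in\Gamma(Ker\Pi)$ be arbitrary. Rewriting the identity above as
\[
\brr{\xi,NX}_A=(\lie^A_\xi N)(X)+N\brr{\xi,X}_A ,
\]
the first summand lies in $\Gamma(Ker\Pi)$ by \eqref{eq:lie:N}; for the second, $\brr{\xi,X}_A\in\Gamma(Ker\Pi)$ by Proposition~\ref{prop:projectable:sections}~$i)$ (as $X$ is projectable), so $N\brr{\xi,X}_A\in\Gamma(Ker\Pi)$ again by \eqref{eq:lie:N}. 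Hence $\brr{\xi,NX}_A\in\Gamma(Ker\Pi)$ for every $\xi\in\Gamma(Ker\Pi)$, and Proposition~\ref{prop:projectable:sections}~$ii)\,a)$ --- available precisely because $\rho_A(Ker\Pi)=V\pi$ --- lets us conclude that $NX$ is $\Pi$-projectable.

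I do not expect a genuine obstacle: the proof is bookkeeping once the Lie-derivative identity is recorded. The only subtlety worth flagging is that in the converse the defining requirement $N(\Gamma_p(A))\subseteq\Gamma_p(A)$ is not checked directly but recovered through the ``if and only if'' in Proposition~\ref{prop:projectable:sections}~$ii)\,a)$, which is exactly the place where the regularity hypothesis $\rho_A(Ker\Pi)=V\pi$ enters.
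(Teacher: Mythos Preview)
Your proof is correct. The forward direction is essentially identical to the paper's: both record the identity $(\lie^A_\xi N)(X)=\brr{\xi,NX}_A-N\brr{\xi,X}_A$ and use that $\Gamma(Ker\Pi)$ is an ideal of $\Gamma_p(A)$ (the paper cites Proposition~\ref{prop:epi} for this, you cite Proposition~\ref{prop:projectable:sections}~$i)$, which is the same fact).

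For the converse the two arguments diverge slightly. The paper works in a local adapted basis $\{\xi_i,X_a\}$, writes $N(X)=f^i\xi_i+F^aX_a$, and shows directly that $\rho_A(\xi)(F^a)=0$ for all $\xi\in\Gamma(Ker\Pi)$, whence $F^a$ is $\pi$-basic because $\rho_A(Ker\Pi)=V\pi$. You instead rearrange the same identity to get $\brr{\xi,NX}_A\in\Gamma(Ker\Pi)$ and then invoke Proposition~\ref{prop:projectable:sections}~$ii)\,a)$ to conclude that $NX$ is projectable. Your route is cleaner and avoids re-doing in coordinates what Proposition~\ref{prop:projectable:sections} has already packaged; the paper's route is more self-contained but essentially reproves that proposition in this special case. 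Both are fine, and both use the hypothesis $\rho_A(Ker\Pi)=V\pi$ at exactly the same point.
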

\begin{proof}
Assume that $N$ is $\Pi$-projectable. For any $X\in\Gamma_p(A)$
\[
(\lie^A_\xi N)(X)=[\xi,NX]_A-N([\xi,X]_A).
\]
Since $NX\in\Gamma_p(A)$, $[\xi,NX]_A \in \Gamma(Ker\Pi)$ and
$[\xi,X]_A \in \Gamma(Ker\Pi)$ (see Proposition \ref{prop:epi}),
then
\[
\lie^A_{\xi} N(X)\in \Gamma(Ker\Pi).
\]

Now we suppose that $\rho_A(Ker\Pi)=V\pi$ and that \eqref{eq:lie:N} holds.
 Consider a local basis of sections $\{\xi_i,X_a\}$ of
$A$ such that $\xi_i\in\Gamma(Ker \Pi)$ and $X_a\in\Gamma_p(A)$.
Then, $X\in\Gamma_p(A)$, implies that $N(X)\in\Gamma_p(A)$.
Indeed,
\[
N(X)=f^i\xi_i+F^a X_a\qquad \mbox{with }f^i,F^a\mbox{ local real $C^\infty$-functions on $M$.}
\]
Hence, keeping in account that $N[\xi,X]_A\in\Gamma(Ker \Pi)$, we
have
\begin{align*}
\begin{array}{rcl}
0&=&\Pi\smc (\lie^A_{\xi}N(X))= \Pi\smc ([\xi,NX]_A-N([\xi,X]_A))
\\&=&\Pi\smc ([\xi,NX]_A)=\Pi\smc (\rho_A(\xi)(F^a)X_a).
\end{array}
\end{align*}
Therefore, $\rho_A(\xi)(F^a)=0$ for any $\xi\in\Gamma(Ker \Pi)$.

Let $Z\in V_x\pi$, with $x\in M.$ Hence, there exists $\xi\in\Gamma(Ker
\Pi)$ such that
\[
Z=\rho_A(\xi)(x).
\]
Thus, we can conclude that $Z(F^a)=0$, i.e. there exists a local $C^\infty-$function $\widetilde{F_a}$ on $\widetilde{M}$
 such that
\[
F_a=\widetilde{F_a}\smc\pi.
\]
\end{proof}

If $N$ is a $\Pi$-projectable Nijenhuis operator on $A$, then we
can construct a new operator $\widetilde{N}\colon \widetilde{A}\to
\widetilde{A}$ as follows.
\begin{equation}\label{eq:N:tilde}
(\widetilde{N}\widetilde{X})\smc \pi=\Pi\smc (NX) \qquad \mbox{for any
} \widetilde{X}\in\Gamma(\widetilde{A}),
\end{equation}
where $X\in\Gamma_p(A)$ is a projectable section such that $\Pi\smc
X=\widetilde{X}\smc\pi$. Note that $\widetilde{N}$ is well defined
since $X\in\Gamma_p(A)$ and therefore $NX\in\Gamma_p(A)$.
Moreover, if $X'$ is another section of $A$ such that $\Pi\smc X'=\Pi\smc X$ then $X'-X\in \Gamma(Ker \Pi)$ and $NX=NX'.$

From previous results, we give conditions for obtaining a Poisson-Nijenhuis structure on the Lie algebroid image of a Lie algebroid epimorphism.
\begin{thm}\label{thm:PN:epi}
Let $(\Pi,\pi):A\to \widetilde{A}$ be a Lie algebroid epimorphism.
Assume that $(P,N)$ is a Poisson-Nijenhuis structure on $A$ such
that $P$ and $N$ are $\Pi$-projectable. Then,
$(\widetilde{P},\widetilde{N})$ is  a Poisson-Nijenhuis structure
on $\widetilde{A}$.
\end{thm}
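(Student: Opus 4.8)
The plan is to transfer every defining property of a Poisson–Nijenhuis structure from $A$ to $\widetilde{A}$ along $\Pi$: push forward, via $\Pi$, identities involving sections, pull back, via $\Pi^{\ast}$, identities involving $1$-sections, and at the end cancel $\pi$ (surjective) and $\Pi^{\ast}$ (fiberwise injective). By Proposition~\ref{prop:projectable:Poisson} the bivector $\widetilde{P}$ is already a Poisson structure, and $\widetilde{N}$ is the well-defined bundle map of \eqref{eq:N:tilde}; so it remains to show that $\widetilde{N}$ has vanishing torsion, that $\widetilde{N}\smc\widetilde{P}^{\sharp}=\widetilde{P}^{\sharp}\smc\widetilde{N}^{\ast}$, and that $\C(\widetilde{P},\widetilde{N})=0$.

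First I would record the intertwining relations used throughout. Dualizing \eqref{eq:N:tilde} (equivalently, noting that $\widetilde{N}_{\pi(x)}\smc\Pi_{x}=\Pi_{x}\smc N_{x}$ on each fiber) gives $\Pi^{\ast}\smc\widetilde{N}^{\ast}=N^{\ast}\smc\Pi^{\ast}$ on $\Gamma(\widetilde{A}^{\ast})$. Next, if $X,Y\in\Gamma_p(A)$ project on $\widetilde{X},\widetilde{Y}$, then $NX,NY\in\Gamma_p(A)$ project on $\widetilde{N}\widetilde{X},\widetilde{N}\widetilde{Y}$ (since $N$ is $\Pi$-projectable and by \eqref{eq:N:tilde}); combining this with the fact that brackets of projectable sections are projectable on the brackets of their images \eqref{eq:projected:algebroid}, one gets that $\brr{X,Y}_{N}$, $N\brr{X,Y}_{N}$ and $\brr{NX,NY}_{A}$ are all projectable, on $\brr{\widetilde{X},\widetilde{Y}}_{\widetilde{N}}$, $\widetilde{N}\brr{\widetilde{X},\widetilde{Y}}_{\widetilde{N}}$ and $\brr{\widetilde{N}\widetilde{X},\widetilde{N}\widetilde{Y}}_{\widetilde{A}}$ respectively. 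Hence $\Pi\smc\T_{N}(X,Y)=\T_{\widetilde{N}}(\widetilde{X},\widetilde{Y})\smc\pi$; since $\T_{N}=0$, $\pi$ is surjective, the torsion is tensorial, and every local frame of $\widetilde{A}$ has $\Pi$-projectable lifts, this forces $\T_{\widetilde{N}}=0$. For the compatibility, take $\widetilde{\al}\in\Gamma(\widetilde{A}^{\ast})$; then $P^{\sharp}\Pi^{\ast}\widetilde{\al}\in\Gamma_p(A)$ projects on $\widetilde{P}^{\sharp}\widetilde{\al}$, so using \eqref{eq:N:tilde}, the compatibility $N\smc P^{\sharp}=P^{\sharp}\smc N^{\ast}$ on $A$, the relation $N^{\ast}\Pi^{\ast}=\Pi^{\ast}\widetilde{N}^{\ast}$ and \eqref{eq:projection:Poisson:tensor} one computes
\[
(\widetilde{N}\widetilde{P}^{\sharp}\widetilde{\al})\smc\pi=\Pi\smc(NP^{\sharp}\Pi^{\ast}\widetilde{\al})=\Pi\smc(P^{\sharp}N^{\ast}\Pi^{\ast}\widetilde{\al})=\Pi\smc(P^{\sharp}\Pi^{\ast}\widetilde{N}^{\ast}\widetilde{\al})=(\widetilde{P}^{\sharp}\widetilde{N}^{\ast}\widetilde{\al})\smc\pi,
\]
and surjectivity of $\pi$ gives $\widetilde{N}\smc\widetilde{P}^{\sharp}=\widetilde{P}^{\sharp}\smc\widetilde{N}^{\ast}$. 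The same computation shows that the bivector $NP$ is $\Pi$-projectable, with projected bivector equal to $\widetilde{N}\widetilde{P}$, a fact needed below.

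The last and most delicate point is $\C(\widetilde{P},\widetilde{N})=0$, and for this I would work with $\Pi^{\ast}$. The key claim is that $\Pi^{\ast}$ intertwines the Koszul-type brackets: applying $\Pi^{\ast}$ to $\brr{\widetilde{\al},\widetilde{\be}}_{\widetilde{P}}$ and using \eqref{eq:lie:commutes:Pi}, the morphism property $\Pi^{\ast}\smc\d^{\widetilde{A}}=\d^{A}\smc\Pi^{\ast}$ and \eqref{31'} one obtains $\Pi^{\ast}\brr{\widetilde{\al},\widetilde{\be}}_{\widetilde{P}}=\brr{\Pi^{\ast}\widetilde{\al},\Pi^{\ast}\widetilde{\be}}_{P}$; running the identical argument with the $\Pi$-projectable bivector $NP$ (whose projection is $\widetilde{N}\widetilde{P}$) in place of $P$ gives $\Pi^{\ast}\brr{\widetilde{\al},\widetilde{\be}}_{\widetilde{N}\widetilde{P}}=\brr{\Pi^{\ast}\widetilde{\al},\Pi^{\ast}\widetilde{\be}}_{NP}$. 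Feeding these two identities, together with $\Pi^{\ast}\widetilde{N}^{\ast}=N^{\ast}\Pi^{\ast}$, into the definition \eqref{15''} of the deformed bracket yields $\Pi^{\ast}\bigl(\brr{\widetilde{\al},\widetilde{\be}}_{\widetilde{P}}^{\widetilde{N}^{\ast}}\bigr)=\brr{\Pi^{\ast}\widetilde{\al},\Pi^{\ast}\widetilde{\be}}_{P}^{N^{\ast}}$, hence $\Pi^{\ast}\bigl(\C(\widetilde{P},\widetilde{N})(\widetilde{\al},\widetilde{\be})\bigr)=\C(P,N)(\Pi^{\ast}\widetilde{\al},\Pi^{\ast}\widetilde{\be})=0$. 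Since every $\Pi_{x}$ is onto, every $\Pi_{x}^{\ast}$ is injective, so $\Pi^{\ast}$ is injective on $\Gamma(\widetilde{A}^{\ast})$ (using once more that $\pi$ is surjective), and therefore $\C(\widetilde{P},\widetilde{N})=0$. Together with the torsion and compatibility statements, this shows $(\widetilde{P},\widetilde{N})$ is a Poisson–Nijenhuis structure on $\widetilde{A}$. I expect the main obstacle to be exactly this last step: getting the two Koszul brackets $\brr{\cdot,\cdot}_{\widetilde{P}}$ and $\brr{\cdot,\cdot}_{\widetilde{N}\widetilde{P}}$ and the $\widetilde{N}^{\ast}$-deformation to pass cleanly through $\Pi^{\ast}$ requires simultaneously aligning \eqref{eq:lie:commutes:Pi}, the Lie-algebroid-morphism property of $\Pi$, \eqref{31'} and $N^{\ast}\Pi^{\ast}=\Pi^{\ast}\widetilde{N}^{\ast}$, and that is where essentially all the bookkeeping accumulates.
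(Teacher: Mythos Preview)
Your proposal is correct and follows essentially the same route as the paper: first the compatibility $\widetilde{N}\smc\widetilde{P}^{\sharp}=\widetilde{P}^{\sharp}\smc\widetilde{N}^{\ast}$ and the torsion identity $\T_{\widetilde{N}}(\widetilde{X},\widetilde{Y})\smc\pi=\Pi\smc\T_{N}(X,Y)$ via \eqref{eq:projection:Poisson:tensor}, \eqref{eq:N:tilde} and the morphism property, and then the Koszul-bracket intertwinings $\Pi^{\ast}\brr{\widetilde{\al},\widetilde{\be}}_{\widetilde{P}}=\brr{\Pi^{\ast}\widetilde{\al},\Pi^{\ast}\widetilde{\be}}_{P}$, $\Pi^{\ast}\brr{\widetilde{\al},\widetilde{\be}}_{\widetilde{N}\widetilde{P}}=\brr{\Pi^{\ast}\widetilde{\al},\Pi^{\ast}\widetilde{\be}}_{NP}$ and $\Pi^{\ast}\brr{\widetilde{\al},\widetilde{\be}}_{\widetilde{P}}^{\widetilde{N}^{\ast}}=\brr{\Pi^{\ast}\widetilde{\al},\Pi^{\ast}\widetilde{\be}}_{P}^{N^{\ast}}$ to conclude $\Pi^{\ast}\C(\widetilde{P},\widetilde{N})=\C(P,N)(\Pi^{\ast}\cdot,\Pi^{\ast}\cdot)=0$. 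Your version is in fact slightly more explicit than the paper's in two places---you spell out that $NP$ is $\Pi$-projectable with projection $\widetilde{N}\widetilde{P}$, and you invoke fiberwise injectivity of $\Pi^{\ast}$ to cancel it at the end---but these are exactly the points the paper leaves implicit, so the arguments coincide.
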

\begin{proof}
We will show that $\widetilde{N}$ is compatible with the Poisson
structure $\widetilde{P}$. Indeed, firstly we show that
\begin{equation}\label{eq:NP=PN:tilde}
\widetilde{N}\smc\widetilde{P}^\sharp=\widetilde{P}^\sharp\smc\widetilde{N}^\ast.
\end{equation}
From \eqref{eq:projection:Poisson:tensor} and \eqref{eq:N:tilde}
it follows that for any
$\widetilde{\al}\in\Gamma(\widetilde{A^\ast})$
\begin{align*}
\widetilde{N}(\widetilde{P}^\sharp\widetilde{\al})\smc\pi=\Pi\smc
N(P^\sharp\Pi^\ast\widetilde{\al})
=\Pi \smc P^\sharp N^\ast(\Pi^\ast\widetilde{\al})=\Pi \smc P^\sharp (\Pi^*\widetilde{N}^*\widetilde{\al})=
\widetilde{P}^\sharp (\widetilde{N}^\ast \widetilde{\al})
\smc \pi.
\end{align*}
On the other hand, using \eqref{eq:Nijenhuis}, \eqref{eq:deformed:bracket}  and the fact that $(\Pi,\pi)$ is a Lie algebroid morphism, we get
\begin{equation}\label{eq:T_N:tilde}
\T_{\widetilde{N}}(\widetilde{X},\widetilde{Y})\smc \pi=\Pi\smc
\T_N(X,Y) \qquad \mbox{for any } \widetilde{X},\widetilde{Y}
\in\Gamma(\widetilde{A}),
\end{equation}
where $X,Y\in\Gamma(A)$ are such that $\widetilde{X}\smc \pi=\Pi\smc
X$, $\widetilde{Y}\smc \pi=\Pi \smc Y$.

Finally, by using \eqref{eq:Lie:alg:morphism:same:base}, \eqref{eq:Lie:reduced},  \eqref{eq:projection:Poisson:tensor}  and
\eqref{eq:N:tilde},  we can prove that
\begin{equation*}\label{eq:bracket:NP:tilde}
\Pi^\ast[\widetilde{\al},\widetilde{\be}]_{\widetilde{P}}
=[\Pi^\ast\widetilde{\al},\Pi^\ast\widetilde{\be}]_{P},\;\;\;
\Pi^\ast[\widetilde{\al},\widetilde{\be}]_{\widetilde{N}\widetilde{P}}
=[\Pi^\ast\widetilde{\al},\Pi^\ast\widetilde{\be}]_{NP}
\end{equation*}
and
\begin{equation*}\label{eq:bracket:N:star:P:tilde}
\Pi^\ast[\widetilde{\al},\widetilde{\be}]^{\widetilde{N}^\ast}_{\widetilde{P}}
=[\Pi^\ast\widetilde{\al},\Pi^\ast\widetilde{\be}]_P^{N^\ast}.
\end{equation*}
As a consequence,
\begin{equation}\label{eq:C(P,N):tilde}
\Pi^\ast(\C(\widetilde{P},\widetilde{N})(\widetilde{\al},\widetilde{\be}))
=\C(P,N)(\Pi^\ast\widetilde{\al},\Pi^\ast\widetilde{\be}),
\end{equation}
for any
$\widetilde{\al},\widetilde{\be}\in\Gamma(\widetilde{A}^\ast)$.

From \eqref{eq:NP=PN:tilde}, \eqref{eq:T_N:tilde} and
\eqref{eq:C(P,N):tilde}
we obtain that $(\widetilde{P},\widetilde{N})$ is a
Poisson-Nijenhuis structure on $\widetilde{A}$.
\end{proof}

The above result suggests us to introduce the following definition.

\begin{defn}\label{def:PN:Lie:alg:morphism}
Let $(\Pi,\pi):A\to \widetilde{A}$ be a Lie algebroid morphism. We
say that $(\Pi,\pi)$ is a \emph{Poisson-Nijenhuis Lie algebroid
morphism} if we have Poisson-Nijenhuis structures $(P, N)$,
$(\widetilde{P}, \widetilde{N})$ on $A$ and $\widetilde{A}$,
respectively, such that
\begin{align*}
(\widetilde{P}^\sharp\widetilde{\al})\smc\pi&=
\Pi \smc (P^\sharp (\Pi^\ast\widetilde{\al})),\\
(\widetilde{N}\widetilde{X})\smc \pi &= \Pi \smc (NX),
\end{align*}
for all $\widetilde{\al}\in\Gamma(\widetilde{A}^\ast)$,
$X\in\Gamma(A)$ and $\widetilde{X}\in\Gamma(\widetilde{A})$ such
that $\widetilde{X}\smc \pi=\Pi\smc X$.
\end{defn}

The following result follows easily from Proposition
\ref{prop:epi} and Theorem \ref{thm:PN:epi}.
\begin{thm}\label{prop:PN:morphism}
Let $(\Pi,\pi):A\to \widetilde{A}$ be a vector bundle epimorphism.
Suppose that $(\br_A,\rho_A,P,N)$ is a Poisson-Nijenhuis Lie
algebroid structure over $A$. Then, there exists a unique Poisson-Nijenhuis Lie algebroid structure on $\widetilde{A}$ such that $(\Pi,\pi)$ is a
Poisson-Nijenhuis Lie algebroid epimorphism if and only if the
following conditions hold:
\begin{enumerate}
\item[i)] The space $\Gamma_p(A)$ of the $\Pi$-projectable
sections of $A$ is a Lie subalgebra of $(\Gamma(A),\br_A)$;
\item[ii)] $\Gamma(\ker \Pi)$ is an ideal of
$\Gamma_p(A)$ and
\item[iii)] $P$ and $N$ are $\Pi$-projectable.
\end{enumerate}
\end{thm}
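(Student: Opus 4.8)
The statement is an ``if and only if'', and the backward direction is essentially immediate: if (i), (ii) hold then Proposition \ref{prop:epi} gives a unique Lie algebroid structure on $\widetilde{A}$ making $(\Pi,\pi)$ a Lie algebroid epimorphism, and if in addition (iii) holds then $P$ and $N$ are $\Pi$-projectable, so Theorem \ref{thm:PN:epi} produces the Poisson-Nijenhuis structure $(\widetilde{P},\widetilde{N})$ on $\widetilde{A}$; by construction (formulas \eqref{eq:projection:Poisson:tensor} and \eqref{eq:N:tilde}) the maps $\Pi$ intertwine $P^\sharp$ with $\widetilde{P}^\sharp$ and $N$ with $\widetilde{N}$ in the sense of Definition \ref{def:PN:Lie:alg:morphism}, so $(\Pi,\pi)$ is a Poisson-Nijenhuis Lie algebroid epimorphism. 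Uniqueness of the Lie algebroid structure on $\widetilde{A}$ comes from Proposition \ref{prop:epi}, and once that structure is fixed, uniqueness of $\widetilde{P}$ and $\widetilde{N}$ follows because $\Pi$ is fibrewise surjective and $\pi$ is a surjective submersion, so \eqref{eq:projection:Poisson:tensor} and \eqref{eq:N:tilde} determine $\widetilde{P}$ and $\widetilde{N}$ pointwise.

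For the forward direction, suppose $(\Pi,\pi)$ underlies a Poisson-Nijenhuis Lie algebroid epimorphism with target structure $(\br_{\widetilde A},\rho_{\widetilde A},\widetilde P,\widetilde N)$. In particular $(\Pi,\pi)$ is then a Lie algebroid epimorphism, so Proposition \ref{prop:epi} immediately yields (i) and (ii). It remains to extract (iii), i.e. that $P$ and $N$ are $\Pi$-projectable in the sense defined before Proposition \ref{prop:projectable:bivector} and before Proposition \ref{prop:projectable:N}. For $P$: given $\widetilde\al\in\Gamma(\widetilde A^\ast)$, the defining relation $(\widetilde P^\sharp\widetilde\al)\smc\pi=\Pi\smc(P^\sharp(\Pi^\ast\widetilde\al))$ exhibits $P^\sharp(\Pi^\ast\widetilde\al)$ as a section of $A$ that is $\Pi$-related to the section $\widetilde P^\sharp\widetilde\al$ of $\widetilde A$; hence $P^\sharp(\Pi^\ast\widetilde\al)\in\Gamma_p(A)$, which is exactly $\Pi$-projectability of $P$. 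For $N$: the relation $(\widetilde N\widetilde X)\smc\pi=\Pi\smc(NX)$ for every $\Pi$-projectable $X$ (over any $\widetilde X$) shows at once that $N$ maps $\Gamma_p(A)$ into $\Gamma_p(A)$; and taking $\widetilde X=0$, i.e. $X\in\Gamma(\Ker\Pi)$, forces $\Pi\smc(NX)=0$, so $N(\Gamma(\Ker\Pi))\subseteq\Gamma(\Ker\Pi)$. Thus $N$ is $\Pi$-projectable and (iii) holds.

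The only genuinely delicate point is to make sure that the pair of Poisson-Nijenhuis structures appearing in Definition \ref{def:PN:Lie:alg:morphism} is compatible with the specific Lie algebroid structure that Proposition \ref{prop:epi} singles out on $\widetilde A$; but since by hypothesis $(\Pi,\pi)$ is a Lie algebroid \emph{epimorphism}, that structure is forced, so there is no ambiguity. I expect the main (and essentially only) obstacle to be bookkeeping: carefully translating between the two equivalent descriptions of $\Pi$-projectability of $P$ (via $P^\sharp\Pi^\ast\widetilde\al\in\Gamma_p(A)$) and of the derived conditions on $N$, and checking that ``$\Pi$-related section'' is exactly the notion of $\Pi$-projectable section used earlier. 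No serious computation is needed beyond unwinding the definitions and invoking Proposition \ref{prop:epi} and Theorem \ref{thm:PN:epi}.
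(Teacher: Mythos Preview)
Your proposal is correct and matches the paper's approach: the paper simply states that the result ``follows easily from Proposition \ref{prop:epi} and Theorem \ref{thm:PN:epi}'', and your argument is precisely the natural unpacking of that sentence, with the forward direction (extracting (i)--(iii) from the definition of a Poisson-Nijenhuis Lie algebroid epimorphism) spelled out in the obvious way. No gap and no genuinely different route.
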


\section{Reduction of a Lie algebroid induced by a Lie subalgebroid}  \label{section5}
\label{sec:reduction:subalgebroid}
In this section we will describe, using the above results about  reduction by epimorphisms  of Lie algebroids,  the reduction of a Lie algebroid by a certain  foliation
associated with  a given
Lie subalgebroid.
In the next section, we will use this construction for obtaining, under suitable regularity conditions,  a
reduced nondegenerate Poisson-Nijenhuis Lie algebroid from an
arbitrary Poisson-Nijenhuis Lie algebroid through a suitable choice of the Lie subalgebroid.

In this reduction procedure of a Lie algebroid,   fundamental tools  are the complete and vertical lifts of sections associated with a Lie algebroid. Firstly, we recall these notions and some properties about them.

\subsection{Complete and vertical lifts in a Lie algebroid}%
Let $(A,\br_A,\rho_A)$  be a Lie algebroid over a manifold $M$ and
$\tau_A:A\to M$ be the corresponding vector bundle projection.

Given $f\in C^\infty(M)$, we will denote by $f^c$ and $f^v$ {\it
the complete and vertical lift} to $A$ of $f$. Here $f^c$ and
$f^v$ are the real functions on $A$ defined by
\begin{equation}\label{fcv}
f^c(a)=\rho_{A}(a)(f),\;\;\;\; f^v(a)=f(\tau_A(a)),
\end{equation}
for all $a\in A$.

Now, let $X$ be a section of $A$. Then, we can consider {\it the
vertical lift of $X$ to $A$} as the vector field $X^v$ on $A$
given by
\[
X^v(a)=X(\tau_A(a))_a^v,\;\;\; \mbox{ for } a\in A,
\]
where $\;^v_{a}:A_{\tau_A(a)}\to T_a(A_{\tau_A(a)})$ is the
canonical isomorphism between the vector spaces $A_{\tau_A(a)}$
and $T_a(A_{\tau_A(a)})$.

On the other hand, there exists a unique vector field $X^c$ on $A$,
{\it the complete lift of $X$ to $A$,} characterized by the two
following conditions:
\begin{enumerate}
\item[$(i)$] $X^c$ is $\tau$-projectable on $\rho_{A}(X)$ and
\item[$(ii)$] $X^c(\hat{\alpha})=\widehat{{\mathcal L}^A_X\alpha},$
\end{enumerate}
for all $\alpha\in \Gamma(A^*)$ (see \cite{GU1}). Here, if
$\beta\in\Gamma(A^*)$ then $\hat\beta$ is the linear function on
$A$ defined by
\[
\hat\beta(a)=\beta(\tau_A(a))(a),\;\;\; \mbox{ for all  $a\in A$.}
\]

Complete and vertical lifts may be extended to associate to any
section $Q\colon M\to\wedge^q A$ of the bundle $\wedge^q A\to M$ a
pair of $q$-multivectors $Q^c,Q^v:A\to \wedge^q (TA)$ on $A.$ These
extensions are uniquely determined by the following equalities
(see \cite{GU1}):
\begin{equation}\label{eq:exterior:complete}
(Q\wedge R)^c = Q^c \wedge R^v + Q^v \wedge R^c,
\end{equation}
and
\begin{equation}\label{eq:exterior:vertical}
(Q\wedge R)^v = Q^v \wedge R^v,
\end{equation}
which are satisfied by any pair of sections $Q:M\to \wedge^q A$,
$R:M\to \wedge^r A$.

A direct computation proves that (see \cite{GU1})
\begin{equation}\label{eq:complete:vertical}
[Q^c,R^c]=[Q,R]_A^c,\;\;\; [Q^c,R^v]=[Q,R]_A^v, \;\;\;
[Q^v,R^v]=0.
\end{equation}

Given $X\in\Gamma(A)$, we can also define the complete lift of $X$
to $A^\ast$ as the vector field $X^{\ast c}$ over $A^\ast$ such
that it is $\tau_{A^\ast}$-projectable on $\rho_A(X)$ and
\begin{equation}\label{eq:completo:star}
X^{\ast c}(\widehat{Y})=\widehat{[X,Y]_A},
\end{equation}
for all $Y\in\Gamma(A)$ (see \cite{GU2}). Here $\widehat{Z}$, with
$Z\in\Gamma(A)$, is the linear map over $A^\ast$ induced by $Z$.
In fact, the complete lifts of a section $X\in\Gamma(A)$ to $A$
and $A^\ast$ are related by the following formula
\begin{equation}\label{eq:completo:star:flusso}
X^{\ast c}(\widehat{Y})=\frac{\d}{\d t}_{|t=0}(\widehat{Y}\smc
\varphi_t^{\ast}), \qquad   \mbox{for any }Y\in\Gamma(A)
\end{equation}
where $\varphi_t\colon A \to A$ is the flow of $X^c\in\X(A)$ (see
\cite{MX2,MAR}).

Suppose that $(x^i)$ are coordinates on an open subset $U$ of $M$,
$\{e_\alpha\}$ is a basis of sections of $\tau_A^{-1}(U)\to U$ and
$\{e^\alpha\}$ is the dual basis of sections of
$\tau_{A^*}^{-1}(U)\to U$. Denote by $(x^i,y^\alpha)$ the
corresponding local coordinates on $\tau_A^{-1}(U)$ and by
$(x^i,y_\alpha)$ the local coordinates on $\tau_{A^*}^{-1}(U)$. Finally,
let $\rho_\alpha^i$ and $C_{\alpha\beta}^\gamma$ be the
corresponding local structure functions of $A$, defined by
\[
\rho_A(e_\alpha)=\rho_\alpha^i\frac{\partial }{\partial
x^i}\;\;\;\mbox{ and }\;\;\; [
e_\alpha,e_\beta]_A=C_{\alpha\beta}^\gamma e_\gamma.
\]

If $X$ is a section of $A$ and on $U$ we have
\[
X=X^\alpha e_\alpha,
\]
then the coordinate expressions of the lifts are given by
\begin{equation}\label{eq:complete:vertical:coordinate}
\begin{array}{rcl} X^v&=&\displaystyle
X^\alpha\displaystyle\frac{\partial}{\partial y^\alpha},\\
X^c&=&X^\alpha \rho_\alpha^i\displaystyle\frac{\partial }{\partial
x^i} + \left(\rho_\beta^i\displaystyle\frac{\partial
X^\alpha}{\partial x^i}-X^\gamma
C_{\gamma\beta}^\alpha\right)y^\beta\displaystyle\frac{\partial
}{\partial y^\alpha},\\
X^{*c}&=&X^\alpha \rho_\alpha^i
\displaystyle\frac{\partial}{\partial x^i} -\left(\rho^i_\alpha
\frac{\partial X^\beta}{\partial x^i} y_\beta +
C^\gamma_{\alpha\beta}y_\gamma X^\beta\right)
\displaystyle\frac{\partial}{\partial y_\alpha}.
\end{array}
\end{equation}

In particular,
\begin{equation}\label{eq:e:complete:vertical:coordinate}
e_\alpha^v=\frac{\partial }{\partial y^\alpha},\;\;\;
e_\alpha^c=\rho_\alpha^i\frac{\partial }{\partial
x^i}-C_{\alpha\beta}^\gamma y^\beta\frac{\partial }{\partial
y^\gamma}, \;\;\;  e_\alpha^{*c}=\rho_\alpha^i\frac{\partial
}{\partial x^i} + C_{\alpha\beta}^\gamma y_\gamma\frac{\partial
}{\partial y_\beta}.
\end{equation}


\subsection{Reduction procedure of a Lie algebroid induced by a Lie subalgebroid}

Before describing this procedure, we prove the following general lemma on vector bundles which will be useful in the
sequel.
\begin{lem}\label{induced:VB}
Let $\pi_A\colon A\to M$ a vector bundle of rank $k$ and  $\pi_B\colon
B\to M'$ be a surjective submersion. Assume that there exist two smooth maps
$\Phi\colon A\to B$ and $\phi\colon M\to M'$ in such a way that  the following
diagram
\[
\xymatrix@C=3pc@R=3pc{A\ar[r]^{\pi_A}\ar[d]^{\Phi} & M\ar[d]^{\phi}\\
            B\ar[r]^{\pi_B} & M'}
\]
\vspace{3mm}

\noindent is commutative and such that
\begin{itemize}
\item[1)] $\phi$ is a submersion;
\item[2)] $\forall x\in M$, $\Phi_x\colon \pi_A^{-1}(x)\to \pi_B^{-1}(\phi(x))$ is
a diffeomorphism and
\item[3)] $\forall x,y\in M$ such that $\phi(x)=\phi(y)$,
\[
\Phi^{-1}_{y} \smc\Phi_x \colon \pi_A^{-1}(x)\to \pi_A^{-1}(y)
\]
is an isomorphism of vector spaces.

Then, $\pi_B\colon B\to M'$ is a vector bundle of rank $k$ and $(\Phi,\phi)$ is a vector bundle epimorphism.
\end{itemize}
\end{lem}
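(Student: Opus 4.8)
The plan is to construct the vector bundle structure on $\pi_B\colon B\to M'$ by transporting the structure of $\pi_A\colon A\to M$ through the fibrewise diffeomorphisms $\Phi_x$, and then to check that this is well defined (independent of the choice of preimage $x$ under $\phi$) and locally trivial. First I would define, for $b,b'\in\pi_B^{-1}(x')$ with $x'\in M'$, the operations $b+b':=\Phi_x(\Phi_x^{-1}(b)+\Phi_x^{-1}(b'))$ and $\lambda b:=\Phi_x(\lambda\Phi_x^{-1}(b))$, where $x\in M$ is any point with $\phi(x)=x'$; the zero of the fibre is $\Phi_x(0_x)$. Hypothesis~$3)$ is exactly what guarantees that these operations do not depend on the chosen $x$: if $\phi(x)=\phi(y)=x'$, then $\Phi_y^{-1}\smc\Phi_x$ is a linear isomorphism $\pi_A^{-1}(x)\to\pi_A^{-1}(y)$, so it intertwines the two candidate operations. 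Since each $\Phi_x$ is a bijection onto $\pi_B^{-1}(x')$, these operations make $\pi_B^{-1}(x')$ a $k$-dimensional real vector space, and by construction each $\Phi_x$ is then a linear isomorphism of fibres.

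Next I would build local trivializations of $B$. Fix $x'_0\in M'$. Since $\phi$ is a surjective submersion, pick $x_0\in\phi^{-1}(x'_0)$ and a local section $s\colon U'\to M$ of $\phi$ through $x_0$ defined on an open neighbourhood $U'$ of $x'_0$ (shrinking so that $A$ is trivial over $s(U')$, say via $\psi\colon\pi_A^{-1}(s(U'))\to s(U')\times\Rr^k$). Then the map
\[
\Psi\colon \pi_B^{-1}(U')\To U'\times\Rr^k,\qquad
\Psi(b)=\bigl(\pi_B(b),\,\pr_{\Rr^k}(\psi(\Phi_{s(\pi_B(b))}^{-1}(b)))\bigr)
\]
is a bijection which is fibrewise linear by the previous paragraph, and it is smooth with smooth inverse because $\Phi$, $\phi$, $s$, $\psi$ and their inverses are smooth and, by hypothesis~$2)$, $\Phi$ restricts to a diffeomorphism on each fibre depending smoothly on the base point (this last smoothness is where one uses that $\Phi$ and $\phi$ are globally smooth maps of manifolds, not merely fibrewise diffeomorphisms). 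Using these trivializations as an atlas, the transition functions are smooth $GL(k,\Rr)$-valued maps, so $\pi_B\colon B\to M'$ is a vector bundle of rank $k$.

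Finally, with this vector bundle structure in place, the pair $(\Phi,\phi)$ is a morphism of vector bundles: the diagram commutes by assumption, and $\Phi_x\colon\pi_A^{-1}(x)\to\pi_B^{-1}(\phi(x))$ is linear — indeed an isomorphism — for every $x$ by construction. It is an epimorphism because $\phi$ is surjective (being a surjective submersion) and each $\Phi_x$ is surjective, in fact bijective.

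I expect the main obstacle to be the smoothness of the trivializations $\Psi$, i.e.\ verifying that $x'\mapsto\Phi_{s(x')}^{-1}$ assembles into a smooth map on the total space; one handles this by working in the local picture where $\Phi$ is a smooth map between manifolds and $\phi$ admits a smooth local section, so that $b\mapsto\Phi_{s(\pi_B(b))}^{-1}(b)$ is a composition of smooth maps, and then invoking hypothesis~$2)$ to see it lands diffeomorphically in $\pi_A^{-1}(s(U'))$. All the algebraic verifications (well-definedness, bilinearity, that transition maps land in $GL(k,\Rr)$) are routine given hypothesis~$3)$.
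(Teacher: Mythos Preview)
Your proposal is correct and follows essentially the same route as the paper: transport the linear structure on each fibre $\pi_B^{-1}(x')$ via $\Phi_x$, use hypothesis~3) for well-definedness, and build local trivializations by composing a local section $s\colon U'\to M$ of $\phi$ with a trivialization of $A$ over $s(U')$ and with $\Phi$. The paper writes the trivialization in the inverse direction, $\overline{\psi}(y',g)=\Phi(\psi_{s(y')}(g))$, and simply records its inverse $\overline{\psi}^{-1}(b)=\bigl(\pi_B(b),\psi^{-1}_{s(\pi_B(b))}\smc\Phi^{-1}_{s(\pi_B(b))}(b)\bigr)$, which is exactly your $\Psi$; your discussion of the smoothness of this inverse and of the epimorphism conclusion is slightly more explicit than the paper's, but the argument is the same.
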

\begin{proof}
Let $x'=\phi(x)\in M'$. Then, there exists a unique structure of
vector space on the fiber $\pi_B^{-1}(x')$ in such a way that the
diffeomorphism $\Phi_x:\pi_A^{-1}(x)\to \pi_B^{-1}(x')$ is an
isomorphism of vector spaces. Moreover, this structure doesn't
depend on the chosen point $x\in M.$ In fact, if $y\in M$ and
$\phi(y)=\phi(x)=x'$ then, from the third assumption above, we
deduce that the map $\Phi_y^{-1}\smc \Phi_x:\pi_A^{-1}(x)\to
\pi_A^{-1}(y)$ is an isomorphism of vector spaces.

On the other hand, using that $\phi$ is a submersion and the fact
that $\pi_A:A\to M$ is a vector  bundle, we have that there exists
an open neighbourhood $U\subset M$ of $x\in M$,
 an open  neighbourhood $U'\subseteq M'$ of
$x'\in M'$ and two smooth maps  $s\colon U'\to
U$ and $\psi\colon U\times \Rr^k \to \pi_A^{-1}(U)$ such that
\begin{itemize}
\item[$1)$] $\phi\smc s =1_{U'}$ and $s(x')=x$.
\item[$2)$] $\psi$ is a diffeomorphism, $\pi_A \smc \psi =pr_1$ and for
each $y\in U$, $\psi_{y}\colon \Rr^k \to \pi_A^{-1}(y)$ is a
vector space isomorphism.
\end{itemize}
Therefore we can construct a diffeomorphism
\[
\overline{\psi}\colon U'\times \Rr^k \to \pi_B^{-1}(U')
\]
as follows: $\overline{\psi}(y',g)=(\Phi\smc \psi_{s(y')})(g)$ for $(y',g)\in U'\times {\Bbb R}^k.$  Note
that
$\overline{\psi}^{-1}(b)=\left(\pi_B(b),(\psi^{-1}_{s(\pi_B(b))}\smc
\Phi^{-1}_{s(\pi_B(b))})(b)\right)$. Moreover, if $y'\in U'$ it is easy to prove that
$\overline{\psi}_{y'}\colon \Rr^k \to \pi_B^{-1}(y')$
is an isomorphism of vector spaces.
\end{proof}

Let $\tau_A\colon A\to M$ be a vector bundle and
$\left(\br_{A},\rho_{A}\right)$ be a Lie algebroid structure on
$A$. Consider a Lie subalgebroid $\tau_B\colon B\to M$ of $A$.
Then, we have the following result.
\begin{prop}\label{prop:gen:foliation}
\noindent\begin{itemize}
\item[$1)$] The generalized distribution $\rho_A(B)$ on $M$ defined by
$$\rho_A(B)_x=\rho_A(B_x)\subseteq T_xM, \mbox{ for every } x\in M,$$
 is a generalized foliation. Moreover,
\[
\dim(\rho_A(B)_x)\leq \rank B, \mbox{ for every }x\in M.
\]
\item[$2)$] The generalized distribution $\mathcal{F}^B$ on $A$
defined by
\[
\mathcal{F}^B_a=\{X^c(a)+Y^v(a)\;|\; X,Y\in\Gamma(B)\}\subseteq T_aA, \;\; \mbox{ for $a\in A$ }
\]
is a generalized foliation. Furthermore,  $\dim \mathcal{F}^B_b=\dim (\rho_A(B))_{\tau_B(b)} + \rank B,$ for $b\in B.$  Thus, if $\mathcal{F}^B$ has constant rank then $\rho_A(B)$ also has constant rank.
\end{itemize}
\end{prop}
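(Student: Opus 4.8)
The plan is to obtain both parts from the involutivity-plus-local-finite-generation criterion for generalized foliations already used in Section~3 for $D=\rho_A(P^\sharp(A^\ast))$, and then to compute the rank of $\mathcal{F}^{B}$ along $B$ by splitting the tangent spaces of $A$ into their vertical and base-projecting parts. For part~$1)$, I would first note that $\rho_A(B)$ is the generalized distribution generated by the family $\{\rho_A(X)\mid X\in\Gamma(B)\}$. Since $B$ is a Lie subalgebroid, $\Gamma(B)$ is closed under $\br_A$, and because $\rho_A\colon(\Gamma(A),\br_A)\to(\X(M),\br)$ is a morphism of Lie algebras, $[\rho_A(X),\rho_A(Y)]=\rho_A([X,Y]_A)$ with $[X,Y]_A\in\Gamma(B)$; hence this family is involutive. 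Over any open set $U$ on which $B$ admits a frame $\{X_1,\dots,X_r\}$, $r=\rank B$, the fields $\rho_A(X_1),\dots,\rho_A(X_r)$ span $\rho_A(B)$, so it is locally finitely generated as a $C^\infty(M)$-module, and, arguing exactly as in Section~3, $\rho_A(B)$ is a generalized foliation \cite{Su}. The bound $\dim\rho_A(B)_x\le\rank B$ is immediate, $\rho_A(B)_x$ being the image of the linear map $(\rho_A)_x\colon B_x\to T_xM$.

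For the foliation statement in part~$2)$ I would run the same argument on the total space $A$. The distribution $\mathcal{F}^{B}$ is generated by $\{X^c\mid X\in\Gamma(B)\}\cup\{Y^v\mid Y\in\Gamma(B)\}$. Specialising \eqref{eq:exterior:complete} and \eqref{eq:exterior:vertical} to a function and a section gives $(fX)^c=f^cX^v+f^vX^c$ and $(fX)^v=f^vX^v$, whence the fields $X_1^c,\dots,X_r^c,X_1^v,\dots,X_r^v$ generate $\mathcal{F}^{B}$ over $\tau_A^{-1}(U)$, so it is locally finitely generated. Involutivity follows from \eqref{eq:complete:vertical}, namely $[X^c,Y^c]=[X,Y]_A^c$, $[X^c,Y^v]=[X,Y]_A^v$, $[X^v,Y^v]=0$, together with $[X,Y]_A\in\Gamma(B)$, after expanding $[X,Y]_A$ in the frame of $B$ and using the lift identities once more. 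Hence $\mathcal{F}^{B}$ is a generalized foliation.

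It remains to compute $\dim\mathcal{F}^{B}_b$ for $b\in B$. Put $x=\tau_A(b)=\tau_B(b)$ and use the exact sequence $0\to V_b\to T_bA\xrightarrow{T_b\tau_A}T_xM\to 0$, with $V_b=\ker T_b\tau_A$ canonically identified with $A_x$. As $X^c$ is $\tau_A$-projectable onto $\rho_A(X)$ and $Y^v$ is $\tau_A$-vertical, $T_b\tau_A$ carries $\mathcal{F}^{B}_b$ onto $\rho_A(B)_x$; and $\{Y^v(b)\mid Y\in\Gamma(B)\}$ is precisely the image of $B_x$ under the vertical identification, so it lies in $\mathcal{F}^{B}_b\cap V_b$ and has dimension $\rank B$. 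The step I expect to be the main obstacle is the reverse inclusion $\mathcal{F}^{B}_b\cap V_b\subseteq\{Y^v(b)\mid Y\in\Gamma(B)\}$: if $X^c(b)+Y^v(b)\in V_b$ then $\rho_A(X)(x)=0$, and one must check that $X^c(b)$ is then a vertical lift of an element of $B_x$. I would verify this in a frame $\{e_\alpha\}$ adapted to $B$ (so that $e_1,\dots,e_r$ frame $B$): writing $X=X^ae_a$ and reading $X^c$ off \eqref{eq:complete:vertical:coordinate}, the component of $X^c(b)$ along $\partial/\partial y^\alpha$ with $\alpha>r$ equals $-X^\gamma C^\alpha_{\gamma\beta}y^\beta$, and this vanishes because $b\in B$ forces $y^\beta=0$ for $\beta>r$, while for $\beta\le r$ one has $C^\alpha_{\gamma\beta}=0$ whenever $\alpha>r$, precisely by the Lie subalgebroid condition on $B$. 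Together with $\rho_A(X)(x)=0$, this shows $X^c(b)$ is vertical with components only along $\partial/\partial y^a$, $a\le r$, i.e.\ $X^c(b)\in\{Y^v(b)\mid Y\in\Gamma(B)\}$. Hence $\mathcal{F}^{B}_b\cap V_b=\{Y^v(b)\mid Y\in\Gamma(B)\}$, and the exact sequence yields $\dim\mathcal{F}^{B}_b=\dim\rho_A(B)_x+\rank B$. Finally, if $\dim\mathcal{F}^{B}$ is constant on $A$, this identity makes $\dim\rho_A(B)_{\tau_B(b)}$ constant in $b\in B$, and since $\tau_B\colon B\to M$ is surjective, $\dim\rho_A(B)_x$ is constant on $M$.
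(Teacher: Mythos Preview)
Your proof is correct and follows essentially the same route as the paper's: both parts use the involutivity-plus-local-finite-generation criterion, and the dimension of $\mathcal{F}^B_b$ is computed via the projection $T_b\tau_A$ onto $\rho_A(B)_x$ together with an adapted-frame argument on the vertical part. Your exact-sequence formulation and explicit use of the subalgebroid condition $C^\alpha_{a\beta}=0$ for $\alpha>r$, $a,\beta\le r$, make the key step (that $X^c(b)$ lies in the vertical lift of $B_x$ when $\rho_A(X)(x)=0$ and $b\in B$) more transparent than the paper's bare citation of \eqref{eq:complete:vertical:coordinate}, but the underlying computation is the same.
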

\begin{proof}
\noindent\begin{itemize}
\item[$1)$] It is clear that $\rho_A(B)$ is a finitely generated distribution. Moreover, if $X,Y\in \Gamma(B)$ then, using that $[X,Y]_B=[X,Y]_A$, we deduce that
$$[\rho_A(X),\rho_B(Y)]=\rho_A[X,Y]_B$$
which implies that $\rho_A(B)$ is an involutive distribution. Thus, $\rho_A(B)$ is a generalized foliation.

On the other hand, if $x\in M,$ we have that
$$\dim (\rho_A(B)_x)\leq \dim B_x=\rank B.$$

\item[$2)$] $\mathcal{F}^B$ is a finitely generated distribution. In fact, let $U$ be an open subset of $M$  and $\{X_i\}$ be a basis of $\Gamma(\tau_B^{-1}(U)).$ Then, $\{X_i^c(a),X_i^v(a)\}$ is a generator system of $\mathcal{F}^B_a,$ for all $a\in \tau_A^{-1}(U).$

Moreover, since $B$ is a Lie subalgebroid of $A$, we deduce that $\mathcal{F}^B$ is an involutive distribution (see \eqref{eq:complete:vertical})).

Now, let $b$ be a point of $B_x$, with $x\in M$, and suppose that
$\{v_a,v_\beta\}$ is a basis of $B_x$, such that $\{\rho_A(v_a)\}$
(respectively, $\{v_\beta\}$) is a basis of $\rho_A(B_x)$
(respectively, $Ker (\rho_{A|B_x})).$ Then, we can choose an open
subset $U$ of $M$, with $x\in U,$ and a basis $\{X_a,X_\beta\}$ of
$\Gamma(\tau_{B}^{-1}(U))$ satisfying
$$X_a(x)=v_a,\;\;\; X_\beta(x)=v_\beta.$$
We complete the basis $\{X_a,X_\beta\}$ to a basis of $\Gamma(\tau_A^{-1}(U))$
$$\{X_a,X_\beta,X_{\bar{a}}\}$$
Next, we will assume, without the loss of generality, that on $U$ we have a system of local coordinates $(x^i)$. Thus, we can consider the corresponding local coordinates $(x^i,y^a,y^\beta,y^{\bar{a}})$ on $\tau_A^{-1}(U).$

Using \eqref{eq:complete:vertical:coordinate}, we deduce that
$$X_a^v(b)=\frac{\partial }{\partial y^a}_{|b},\;\;\; X_\beta^v(b)=\frac{\partial }{\partial y^\beta}_{|b}$$
$$(T\tau_B)(X_a^c(b))=\rho_A(v_a),\;\,\; X_\beta^c(b)\in V_b\tau_B=<\frac{\partial }{\partial y^a}_{|b}, \frac{\partial }{\partial y^\beta}_{|b}>$$ for all $a$ and $\beta.$

Therefore,
$$\dim \mathcal{F}^B_b=\dim (\rho_A(B)_{\tau_B(b)}) + \rank B$$
\end{itemize}
\end{proof}

\begin{rem}\label{r5.2'}
{\rm Note that if $a\in A_x$ then $(T_a\tau_A)(\mathcal{F}^B_a)=\rho_A(B_x).$ }
\end{rem}

Assume that $\rho_A(B)$ and $\mathcal{F}^B$ are regular foliations,
i.e., they have finite constant rank, $M/\rho_A(B)$ and $A/\mathcal{F}^B$
are differentiable manifolds, and
\[
\pi\colon M\to M/\rho_A(B)=\widetilde{M} \qquad\mbox{and}\qquad
\Pi\colon A\to A/\mathcal{F}^B=\widetilde{A}
\]
are submersions.

We define $\tau_{\widetilde{A}}\colon
\widetilde{A}=A/\mathcal{F}^B\to \widetilde{M}=M/\rho_A(B)$ such
that the following diagram is commutative
\[
\xymatrix@C=4pc@R=4pc{ A \ar[r]^(.35){\Pi}\ar[d]^{\tau_A} & \widetilde{A}=A/\mathcal{F}^B\phantom{AA} \ar[d]^{\tau_{\widetilde{A}}}\\
                       M\ar[r]^(.35){\pi} & \widetilde{M}=M/\rho_A(B)}
\]
\vspace{3mm}

\noindent The map $\tau_{\widetilde{A}}$ is well defined. Indeed,
if $\Pi(a_x)=\Pi(a_{x'})\in A/\mathcal{F}^B$, with $a_x\in A_x$,
$a_{x'}\in A_{x'}$, $x,x'\in M$, then there exists a curve
$\sigma_A\colon [0,1] \to A$ continuous, piecewise differentiable,
tangent to $\mathcal{F}^B$, such that $\sigma_A(0)=a_x$ and
$\sigma_A(1)=a_{x'}$. Consider now the curve
$\sigma_M=\tau_A\smc\sigma_A\colon [0,1] \to M$ which results to
be continuous, piecewise differentiable, tangent to $\rho_A(B)$
(see Remark \ref{r5.2'}), such that
\[
\sigma_M(0)=x \qquad\mbox{and}\qquad \sigma_M(1)={x'}.
\]
Hence, $\pi(x)=\pi(x')$.

Note that $\tau_{\widetilde{A}}$ is a submersion since
$\tau_A,\Pi$ and $\pi$ are submersions. On the other hand, we have a
vector bundle $\tau_{\overline{A}}\colon \overline{A}=A/B \to M$
such that the following diagram is commutative
\[
\xymatrix@C=3pc@R=3pc{ A \ar[r]^(.35){\overline{\Pi}}\ar[dr]^{\tau_A} & A/B=\overline{A}\phantom{AA} \ar[d]^{\tau_{\overline{A}}}\\
                       & M}
\]
\vspace{3mm}
In fact, $\overline{\Pi}$ is a vector bundle epimorphism.
\noindent Therefore, we can induce a smooth map $\widetilde{\Pi}\colon A/B \to
A/\mathcal{F}^B$ making commutative the following diagram
\[
\xymatrix@C=4pc@R=4pc{ \overline{A}=A/B \ar[r]^(.40){\widetilde{\Pi}}\ar[d]^{\tau_{\overline{A}}} & \widetilde{A}=A/\mathcal{F}^B\phantom{AAA} \ar[d]^{\tau_{\widetilde{A}}}\\
                       M\ar[r]^(.40){\pi} & \widetilde{M}=M/\rho_A(B)}
\]
\vspace{3mm}

\noindent Indeed, if $a'_x - a_x \in B_x$, then we can consider
the curve $\sigma\colon [0,1] \to A_x $ defined by
\[
\sigma(t)=a_x+t (a'_x-a_x).
\]
Note that $\sigma(0)=a_x$ and $\sigma(1)=a_{x'}$. Moreover,
$\dot{\sigma}(t_0)=(a'_x-a_x)^v_{\sigma(t_0)}\in\mathcal{F}^B_{\sigma(t_0)}$.
Hence, $\Pi(a_x)=\Pi(a'_x)$. $\widetilde{\Pi}$ is a smooth map since $\overline{\Pi}:A\to \bar{A}=A/B$ is a submersion.

In order to guarantee that $\tau_{\widetilde{A}}$ is a vector bundle, we suppose that $B$  satisfies {\it the condition $\mathcal{F}^B$ }, i.e.
\[
a_x,a'_x\in A_x \mbox{ are in the same leaf of } \mathcal{F}^B\qquad
\Longleftrightarrow  \qquad a'_x-a_x\in B_x.
\]
for any $x\in M.$ Note that if $a'_x-a_x\in B_x$ then $a_x$ and $a'_x$ are in the same leaf of  $\mathcal{F}^B.$ In order to prove that the  condition $\mathcal{F}^B$ holds it is only necessary to verify the other implication.

\begin{prop}\label{prop:reduced:is:vector:bundle}
Assume that  $\rho_A(B)$ and $\mathcal{F}^B$ are regular foliations
and that $B$ satisfies the condition $\mathcal{F}^B$. Then,
$\tau_{\widetilde{A}}\colon \widetilde{A}=A/\mathcal{F}^B\to
\widetilde{M}=M/\rho_A(B)$ is a vector bundle, the fiber of $\widetilde{A}$ over the point $\pi(x)\in \widetilde{M}$ is
isomorphic to the quotient vector space $A_x/B_x$  and the diagram
\[
\xymatrix@C=3pc@R=3pc{ \overline{A}=A/B \ar[r]^(.40){\widetilde{\Pi}}\ar[d]^{\tau_{\overline{A}}} & \widetilde{A}=A/\mathcal{F}^B\phantom{AAA} \ar[d]^{\tau_{\widetilde{A}}}\\
                       M\ar[r]^(.40){\pi} & \widetilde{M}=M/\rho_A(B)}
\]
\vspace{3mm}
\noindent is a vector bundle epimorphism. In fact, the restriction of $\widetilde{\Pi}$ to the fiber $\bar{A}_x=A_x/B_x$ is a linear isomorphism on $\widetilde{A}_{\pi(x)}.$
\end{prop}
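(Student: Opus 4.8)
The plan is to apply Lemma~\ref{induced:VB} to the vector bundle $\tau_{\overline{A}}\colon \overline{A}=A/B\to M$ (which has rank $\rank A-\rank B$), the surjective submersion $\tau_{\widetilde{A}}\colon \widetilde{A}=A/\mathcal{F}^B\to \widetilde{M}=M/\rho_A(B)$, and the maps $\widetilde{\Pi}\colon \overline{A}\to\widetilde{A}$ and $\pi\colon M\to\widetilde{M}$, which form the commutative square displayed above. Hypothesis $(1)$ of the Lemma holds because $\pi$ is the projection onto the leaf space of the regular foliation $\rho_A(B)$, hence a submersion; hypotheses $(2)$ and $(3)$ are the two points that require work. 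Once they are verified, Lemma~\ref{induced:VB} gives immediately that $\tau_{\widetilde{A}}$ is a vector bundle of rank $\rank A-\rank B$, that its fibre over $\pi(x)$ is (via $\widetilde{\Pi}_x$) the vector space $A_x/B_x$, and that $(\widetilde{\Pi},\pi)$ is a vector bundle epimorphism, which is exactly the assertion.

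For hypothesis $(2)$ I would first show that $\widetilde{\Pi}_x=\widetilde{\Pi}|_{\overline{A}_x}\colon A_x/B_x\to\tau_{\widetilde{A}}^{-1}(\pi(x))$ is a bijection, noting that $\tau_{\widetilde{A}}^{-1}(\pi(x))$ is a submanifold of $\widetilde{A}$ since $\tau_{\widetilde{A}}$ is a submersion. Surjectivity: an element of $\tau_{\widetilde{A}}^{-1}(\pi(x))$ is $\Pi(a_z)$ with $z=\tau_A(a_z)$ in the same leaf of $\rho_A(B)$ as $x$; since $T_a\tau_A(\mathcal{F}^B_a)=\rho_A(B_{\tau_A(a)})$ (Remark~\ref{r5.2'}), the restriction of $\tau_A$ to the leaf of $\mathcal{F}^B$ through $a_z$ submerses onto the leaf of $\rho_A(B)$ through $z$, so that leaf meets $A_x$ in some $a_x$ with $\Pi(a_x)=\Pi(a_z)$. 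Injectivity is precisely the condition $\mathcal{F}^B$: if $\Pi(a_x)=\Pi(a'_x)$ with $a_x,a'_x\in A_x$, then $a_x,a'_x$ lie in one leaf of $\mathcal{F}^B$, hence $a'_x-a_x\in B_x$, i.e. $[a_x]=[a'_x]$. To promote this bijection to a diffeomorphism I would compute $\ker T_{a_x}(\Pi|_{A_x})=T_{a_x}A_x\cap\mathcal{F}^B_{a_x}$: it contains the vertical lift $(B_x)^v_{a_x}$ (take $X=0$ in the definition of $\mathcal{F}^B$), while the regularity of $\mathcal{F}^B$ together with Proposition~\ref{prop:gen:foliation} gives $\dim\mathcal{F}^B_{a_x}=\rank\rho_A(B)+\rank B$ and Remark~\ref{r5.2'} gives $\dim T_{a_x}\tau_A(\mathcal{F}^B_{a_x})=\rank\rho_A(B)$, so $T_{a_x}A_x\cap\mathcal{F}^B_{a_x}$ has dimension $\rank B=\dim(B_x)^v_{a_x}$; hence the two agree. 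Thus $\Pi|_{A_x}$ descends to an injective immersion $\widetilde{\Pi}_x$ between manifolds of equal dimension $\rank A-\rank B$, and a bijective immersion between equidimensional manifolds is a diffeomorphism.

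The main obstacle is hypothesis $(3)$: for $\pi(x)=\pi(y)$ one must see that $\widetilde{\Pi}_y^{-1}\circ\widetilde{\Pi}_x\colon A_x/B_x\to A_y/B_y$, which sends $[a_x]$ to $[a_y]$ whenever $a_x$ and $a_y$ lie in the same leaf of $\mathcal{F}^B$, is \emph{linear}. I would realize this map as the reduction modulo $B$ of a vector bundle automorphism of $A$. Pick a path $\gamma$ from $x$ to $y$ tangent to $\rho_A(B)$ inside the common leaf and write $\dot\gamma(t)=\sum_a c^a(t)\,\rho_A(X_a)(\gamma(t))$ with $X_a\in\Gamma(B)$ (patching finitely many charts if $\gamma$ is not contained in one). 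The time-dependent vector field $\sum_a c^a(t)X_a^c$ on $A$ takes values in $\mathcal{F}^B$ (each $X_a^c(a')\in\mathcal{F}^B_{a'}$) and projects onto $\dot\gamma$, so its flow transports $a_x$ to the $\mathcal{F}^B$-holonomy image of $a_x$ along $\gamma$, which represents $\widetilde{\Pi}_y^{-1}\circ\widetilde{\Pi}_x([a_x])$. Since the flow of a complete lift $X_a^c$ is a one-parameter group of vector bundle automorphisms of $A$ covering the flow of $\rho_A(X_a)$, and since $X_a\in\Gamma(B)$ with $B$ a Lie subalgebroid forces these automorphisms to preserve $B$ (the complete lift of a section of $B$ is tangent to $B$, so $\mathcal{F}^B_b\subseteq T_bB$ for $b\in B$), the resulting transport $A_x\to A_y$ is a linear isomorphism carrying $B_x$ onto $B_y$; hence it induces the required linear isomorphism $A_x/B_x\to A_y/B_y$, establishing $(3)$. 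The final ``in fact'' clause is then automatic: by the construction of the vector space structure in Lemma~\ref{induced:VB}, $\widetilde{\Pi}_x$ is by definition the linear isomorphism $\overline{A}_x=A_x/B_x\to\widetilde{A}_{\pi(x)}$.
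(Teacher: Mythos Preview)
Your approach is essentially the paper's: both apply Lemma~\ref{induced:VB} to the square $(\widetilde{\Pi},\pi)$ and verify hypotheses $(2)$ and $(3)$ for $\widetilde{\Pi}_x$. The differences are in the technical execution of those two checks, and they are worth noting.

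For hypothesis $(2)$, your immersion argument (computing $\ker T_{a_x}(\Pi|_{A_x})=T_{a_x}A_x\cap\mathcal{F}^B_{a_x}=(B_x)^v_{a_x}$ via the dimension formula of Proposition~\ref{prop:gen:foliation} and Remark~\ref{r5.2'}) is cleaner than the paper's, which instead shows $\widetilde{\Pi}_x$ is a \emph{submersion} through a somewhat longer tangent-space chase. However, your surjectivity step has a small gap: knowing that $\tau_A$ restricted to the $\mathcal{F}^B$-leaf through $a_z$ is a submersion into the $\rho_A(B)$-leaf does not by itself guarantee the image is the \emph{whole} leaf; a submersion between connected manifolds need not be surjective. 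The paper closes this by explicitly lifting a path in the $\rho_A(B)$-leaf using the flow of a complete lift $X^c$ with $X\in\Gamma(B)$, which is exactly the mechanism you already invoke in part $(3)$; you should import that flow argument here.

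For hypothesis $(3)$, the situation is reversed: the paper simply asserts that $\widetilde{\Pi}_{x'}^{-1}\circ\widetilde{\Pi}_x$ is linear, while you supply the substance, realizing the map via the flow of a time-dependent sum of complete lifts (fibrewise linear because $X^c$ is a linear vector field on $A$) and observing that it preserves $B$ because $X^c$ is tangent to $B$ for $X\in\Gamma(B)$ when $B$ is a Lie subalgebroid. That last tangency claim is correct and is the key point the paper leaves implicit.
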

\begin{proof}
We apply Lemma \ref{induced:VB} to the following diagram
\[
\xymatrix@C=3pc@R=3pc{ \overline{A}=A/B \ar[d]^{\widetilde{\Pi}} \ar[r]^{\tau_{\overline{A}}} &  M \ar[d]^{\pi}\\
                       \widetilde{A}=A/\mathcal{F}^B \ar[r]^(.40){\tau_{\widetilde{A}}} & \widetilde{M}=M/\rho_A(B)}
\]
\vspace{3mm}

\noindent Note that $\pi$ and $\tau_{\widetilde{A}}$ are
submersions. Then, for all $x\in M$,
$\tau_{\widetilde{A}}^{-1}(\pi(x))$ is a regular submanifold of
$\widetilde{A}$ and $T_{\Pi(a_x)}
\tau_{\widetilde{A}}^{-1}(\pi(x))=\ker T_{\Pi(a_x)}
\tau_{\widetilde{A}}$ for all $a_x\in A_x$.

We will see that
\[
\widetilde{\Pi}_x\colon A_x/B_x \to
\tau_{\widetilde{A}}^{-1}(\pi(x))
\]
is a surjective submersion.

Indeed, let
$\Pi(a_{x'})\in\tau_{\widetilde{A}}^{-1}(\pi(x))$. Then,
$\tau_{\widetilde{A}}(\Pi(a_{x'}))=\pi(x)$. Hence,
$\pi(x)=\pi(x')$. Therefore, there exists a continuous, piecewise
differentiable path $\sigma\colon [0,1] \to M$ tangent to
$\rho_A(B)$ such that $\sigma(0)=x$ and $\sigma(1)=x'$. In each
differentiable piece we can find $X\in\Gamma(B)$ such that
$\sigma$ is an integral curve of $\rho_A(X)$. Assume, without the loss of generality, that the curve $\sigma$ is smooth and let $\psi\colon
\Rr\times M\to M$ be the flow of $\rho_A(X)$. Then, $\psi_x(0)=x$,
\[
\frac{\d\psi_x}{\d t}=\rho_A(X)(\psi_x(t))
\]
and there exists $t_0\in\Rr$ such that $\psi_{t_0}(x)=x'$. Let
$\varphi\colon \Rr\times A\to A$ be the flow of $X^c\in\X(A)$.
Since $X^c$ projects on $\rho_A(X)$ we have that the following
diagram is commutative for any $t$
\[
\xymatrix@C=3pc@R=3pc{ A \ar[d]^{\tau_A} \ar[r]^{\varphi_t} &  A \ar[d]^{\tau_A}\\
                       M \ar[r]^{\psi_t} & M}
\]
\vspace{3mm}

\noindent Hence $\varphi_{-t_0}(a_{x'})\in A_x$ and hence we have
a curve  $\varphi_{a_{x'}}$ on $A$ such that
\[
\frac{\d \varphi_{a_{x'}}}{\d t}(t)=X^c(\varphi_{a_{x'}}(t))\in
\mathcal{F}^B_{\varphi_{a_{x'}}(t)},
\]
$\varphi_{a_{x'}}(0)=a_{x'}$ and
$\varphi_{a_{x'}}(-t_0)=\varphi_{-t_0}(a_{x'})$. Thus,
\[
\widetilde{\Pi}_x\overline{\Pi}_x(\varphi_{-t_0}(a_{x'}))=\Pi_{x'}(\varphi_{-t_0}(a_{x'}))=\Pi_{x'}(a_{x'}),
\]
where $\Pi_x$ and $\Pi_{x'}$ (respectively, $\overline{\Pi}_x$) are  the
restrictions of $\Pi$ (respectively, $\overline{\Pi}$) to the fiber over $x$ and $x'$.
So, $\widetilde{\Pi}_x$ is surjective. Moreover, using that the following diagram
\[
\xymatrix@C=3pc@R=3pc{ A_x/B_x \ar[r]^(.40){\widetilde{\Pi}_x} & \tau_{\widetilde{A}}^{-1}(\pi(x))\phantom{AA}\\
                        A_x  \ar[u]_{\overline{\Pi}_x} \ar[ur]_{\Pi_x}}
\]
\vspace{3mm}
is commutative, we deduce that $\widetilde{\Pi}_x:A_x/B_x\to \tau_{\widetilde{A}}^{-1}(\pi(x))$ is smooth.

As a matter of fact $\widetilde{\Pi}_x$ is a submersion, i.e.,
\[
T_{\overline{\Pi}(a_x)}\widetilde{\Pi}_x\colon
T_{\overline{\Pi}(a_x)}(A_x/B_x)\to
T_{\Pi(a_x)}(\tau_{\widetilde{A}}^{-1}(\pi(x)))
\]
is surjective. Indeed, let $\widetilde{X}\in
T_{\Pi(a_x)}(\tau_{\widetilde{A}}^{-1}(\pi(x)))=\ker
T_{\Pi(a_x)}\tau_{\widetilde{A}}$. Then, since $\Pi\colon A\to
\widetilde{A}=A/\mathcal{F}^B$ is a submersion, there exists $X\in
T_{a_x}A$ such that
\begin{equation}\label{eq:X:tilde}
\widetilde{X}=T_{a_x}\Pi(X).
\end{equation}
Hence,
\begin{align*}
0=T_{\Pi(a_x)}\tau_{\widetilde{A}} (\widetilde{X})&=
T_{\Pi(a_x)}(\tau_{\widetilde{A}} \smc \Pi)(X)=T_{a_x} (\pi \smc
\tau_A)(X),
\end{align*}
i.e. $T_{a_x} \tau_A(X)\in\ker T_x\pi=V_x\pi=\rho_A(B_x)$.

From Remark  \ref{r5.2'}, we deduce that there exists $Y\in \mathcal{F}^B_{a_x}$ such that
\[
T_{a_x}\tau_A (Y)=T_{a_x}\tau_A (X),
\]
or equivalently $X-Y\in \ker T_{a_x} \tau_A = V_{a_x}(\tau_A)=
T_{a_x}A_x$.

Consider now $\overline{\Pi}_x\colon A_x\to A_x/B_x$. Then,
\[
W_x=T_{a_x} \overline{\Pi}_x (X-Y)\in T_{\overline{\Pi}({a_x})}
(A_x/B_x).
\]
We will see now that
$T_{\overline{\Pi}({a_x})}\widetilde{\Pi}_x(W_x)=\widetilde{X}$.
In fact,
$$
T_{\overline{\Pi}({a_x})}\widetilde{\Pi}_x(W_x)=T_{a_x}(\widetilde{\Pi}_x \smc
\overline{\Pi}_x)(X-Y)=T_{a_x}\Pi_x(X-Y).
$$

 On the other hand, from \eqref{eq:X:tilde} and since
$Y\in\mathcal{F}^B_{a_x}$,
\[
\widetilde{X}=T_{a_x}\Pi(X)=T_{a_x}\Pi(X-Y).
\]

Therefore, $T_{\bar\Pi(a_x)}\widetilde{\Pi}_x$ is surjective. Indeed, $T_{\bar\Pi(a_x)}\widetilde{\Pi}_x$ is a linear isomorphism since
\[
\dim T_{\overline{\Pi}(a_x)}(A_x/B_x)=\dim A_x-\dim B_x
\]
and by using Proposition \ref{prop:gen:foliation}
\begin{align*}
\dim T_{\widetilde{\Pi}(a_x)}\tau_{\widetilde{A}}^{-1}(\pi(x))&= \dim \widetilde{A} - \dim \widetilde{M}\\
&=\dim A - \rank \mathcal{F}^B- \dim \widetilde{M}\\
&=\dim A_x -\dim B_x.
\end{align*}
Thus,
\[
\widetilde{\Pi}_x\colon A_x/B_x \to \tau_{\widetilde{A}}^{-1}(\pi(x))
\]
is a local diffeomorphism. Therefore (using that $\widetilde{\Pi}_x$ is bijective), $\widetilde{\Pi}_x$ is  a global diffeomorphism.

Finally, if $\pi(x)=\pi(x')$, it is clear that
\[
\widetilde{\Pi}_{x'}^{-1} \smc \widetilde{\Pi}_x\colon A_x/B_x \to A_{x'}/B_{x'}
\]
is  a linear  isomorphism.
\end{proof}

\begin{prop}\label{prop:quotient:alg}
Under the same conditions as in Proposition \ref{prop:reduced:is:vector:bundle},
we can define a Lie algebroid structure
on the vector bundle
\[
\tau_{\widetilde{A}}\colon \widetilde{A}=A/\mathcal{F}^B\to
\widetilde{M}=M/\rho_A(B)
\]
such that the diagram
\[
\xymatrix@C=3pc@R=3pc{ A \ar[r]^(.35){\Pi}\ar[d]_{\tau_A} & \widetilde{A}=A/\mathcal{F}^B\phantom{AAA} \ar[d]^{\tau_{\widetilde{A}}}\\
                        M\ar[r]^(.35){\pi} & \widetilde{M}=M/\rho_A(B)}
\]
\vspace{3mm}

\noindent is an epimorphism of Lie algebroids.
\end{prop}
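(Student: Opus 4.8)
The plan is to apply Proposition \ref{prop:epi} to the vector bundle epimorphism $(\Pi,\pi):A\to\widetilde{A}=A/\mathcal{F}^B$ furnished by Proposition \ref{prop:reduced:is:vector:bundle}. By that result, it suffices to verify the two conditions of Proposition \ref{prop:epi}: that $\Gamma_p(A)$, the space of $\Pi$-projectable sections, is a Lie subalgebra of $(\Gamma(A),\br_A)$, and that $\Gamma(\Ker\Pi)$ is an ideal of $\Gamma_p(A)$. The point is that $\Ker\Pi$ is controlled by the subalgebroid $B$: since, by construction, $\Pi(a_x)=\Pi(a'_x)$ exactly when $a'_x-a_x\in B_x$ (this is precisely the condition $\mathcal{F}^B$ together with the linear-path argument used in Proposition \ref{prop:reduced:is:vector:bundle}), we have $\Ker\Pi=B$ as a subbundle of $A$. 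Also, because $\rho_A(B_x)=V_x\pi$ by construction of $\widetilde M=M/\rho_A(B)$, we are in the situation $\rho_A(\Ker\Pi)=V\pi$, so the characterizations in the item $(ii)$ of Proposition \ref{prop:projectable:sections} are available.

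First I would identify the $\Pi$-projectable sections. A section $X\in\Gamma(A)$ projects iff $\Pi\smc X$ is constant along the leaves of $\mathcal{F}^B$, i.e. iff $X^c$ is $\Pi$-related to a well-defined vector field downstairs; concretely, by Proposition \ref{prop:projectable:sections}$(ii)(a)$, $X\in\Gamma_p(A)$ iff $\brr{\xi,X}_A\in\Gamma(B)$ for every $\xi\in\Gamma(B)$. One should also observe that every section $\overline X$ of $\widetilde A$ lifts: choosing a local frame of $B$-adapted sections $\{X_a\}$ as in Section \ref{sec:reduction:by:epimorphism} (the horizontal lifts of a frame of $\widetilde A$), the complete lifts $X_a^c$ span a complement of $\mathcal{F}^B$ inside $T A$ along the zero-preserving directions, which is the source of projectability. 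With this description, closure of $\Gamma_p(A)$ under $\br_A$ is the Jacobi identity: if $X,X'\in\Gamma_p(A)$ and $\xi\in\Gamma(B)$, then
\[
\brr{\xi,\brr{X,X'}_A}_A=\brr{\brr{\xi,X}_A,X'}_A+\brr{X,\brr{\xi,X'}_A}_A,
\]
and each term lies in $\Gamma(B)$: $\brr{\xi,X}_A,\brr{\xi,X'}_A\in\Gamma(B)$ since $X,X'$ project, and then $\brr{\brr{\xi,X}_A,X'}_A\in\Gamma(B)$ again because $X'$ projects and $\brr{\xi,X}_A$ is a section of $B$. Hence $\brr{X,X'}_A\in\Gamma_p(A)$.

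For the ideal property, let $X\in\Gamma_p(A)$ and $\xi\in\Gamma(B)$; I must show $\brr{X,\xi}_A\in\Gamma(B)$. Since $B$ is a Lie subalgebroid and $X$ projects, it is most transparent to argue pointwise using the flows: if $\varphi_t$ denotes the flow of $X^c$ on $A$, then $\varphi_t$ covers the flow $\psi_t$ of $\rho_A(X)$ on $M$, and because $X$ is projectable $\varphi_t$ carries $\mathcal{F}^B$-leaves to $\mathcal{F}^B$-leaves, hence (using $\Ker\Pi=B$) it carries $B$ to $B$; differentiating at $t=0$ the relation $\varphi_{-t}(B)\subseteq B$ gives $\lie^A_X\xi=\brr{X,\xi}_A\in\Gamma(B)$. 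Alternatively this can be done algebraically from the structure functions in the $B$-adapted frame, using \eqref{eq:complete:vertical:coordinate}. Either way, once both conditions of Proposition \ref{prop:epi} are checked, that proposition produces the unique Lie algebroid structure $(\br_{\widetilde A},\rho_{\widetilde A})$ on $\widetilde A$ making $(\Pi,\pi)$ a Lie algebroid epimorphism, characterized by \eqref{eq:projected:algebroid}; this is exactly the assertion. The main obstacle is the careful verification that $\Ker\Pi$ equals $B$ (not something larger) — this is where the condition $\mathcal{F}^B$ and the hypothesis $\rho_A(B)=V\pi$ are essential — and the bookkeeping showing that a frame of $B$-adapted projectable sections exists globally enough to make the flow/frame arguments legitimate; the Lie-algebraic identities themselves are then routine.
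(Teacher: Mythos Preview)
Your overall architecture matches the paper's: reduce to Proposition~\ref{prop:epi}, identify $\Ker\Pi=B$, characterize $\Gamma_p(A)$ as the sections $X$ with $\brr{\xi,X}_A\in\Gamma(B)$ for all $\xi\in\Gamma(B)$, and then deduce both conditions of Proposition~\ref{prop:epi} from Jacobi. The problem is how you obtain that characterization.

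You invoke Proposition~\ref{prop:projectable:sections}$(ii)(a)$, but that proposition is stated for an epimorphism \emph{of Lie algebroids}: its proof uses Proposition~\ref{prop:epi} to know that the chosen projectable frame $\{X_a\}$ already satisfies $\brr{\xi_i,X_a}_A\in\Gamma(\Ker\Pi)$. Here $\widetilde{A}$ carries no Lie algebroid structure yet, so the appeal is circular; for an arbitrary metric splitting there is no reason the horizontal lifts $X_a$ should bracket into $B$ with sections of $B$. This is precisely the step the paper does \emph{not} take for granted: the bulk of its proof is a direct argument, in both directions, using the flow $\varphi_t$ of $Y^c$ for $Y\in\Gamma(B)$ (not of $X^c$). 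Since $Y^c$ is tangent to $\mathcal{F}^B$, one has $\Pi\smc\varphi_t=\Pi$, and comparing $X(\psi_t(x))$ with $\varphi_t(X(x))$ and differentiating at $t=0$ via \eqref{eq:completo:star:flusso} yields $\brr{X,Y}_A\in\Gamma(B)$ when $X$ is projectable; the converse is handled by showing $\widetilde X(\pi(x)):=\Pi(X(x))$ is well defined along $\rho_A(B)$-leaves by the same flow trick.

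Your separate flow argument for the ideal property is also off: you run the flow of $X^c$ and assert it preserves $\mathcal{F}^B$-leaves ``because $X$ is projectable'', but $\varphi_t$ preserves the leaves iff $[X^c,Y^c]=\brr{X,Y}_A^c$ and $[X^c,Y^v]=\brr{X,Y}_A^v$ lie in $\mathcal{F}^B$, i.e.\ iff $\brr{X,Y}_A\in\Gamma(B)$ --- which is the very statement you are proving. In any case, once the characterization is established the ideal property is immediate ($\brr{X,\xi}_A=-\brr{\xi,X}_A\in\Gamma(B)$), so no further flow argument is needed. The fix is to drop the appeal to Proposition~\ref{prop:projectable:sections} and prove the characterization directly with the flows of sections of $B$, as the paper does.
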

\begin{proof}
Due to Proposition \ref{prop:epi},  it is enough to prove the following facts
\begin{itemize}
\item[$i)$] If $ X,Y\in\Gamma_p^{\Pi}(A)$ then $\brr{X,Y}_A\in\Gamma_p^{\Pi}(A)$ and
\item[$ii)$] If $X\in\Gamma_p^{\Pi}(A)$ and
$Y\in\Gamma(\ker\Pi)$ then
$\brr{X,Y}_A\in\Gamma(\ker\Pi).$
\end{itemize}
Here $\Gamma_p^{\Pi}(A)$ is the space of $\Pi$-projectable
sections.

Note firstly that $\ker\Pi=B$ (see Proposition \ref{prop:reduced:is:vector:bundle} ). Then,   we will prove that
\[
\Gamma_p^{\Pi}(A)=\{X\in \Gamma(A) \;|\; \brr{X,Y}\in\Gamma(B),\;\;\forall Y\in\Gamma(B) \}.
\]
Once we prove that, condition $i)$ above follows by the Jacobi
identity and $ii)$ is a direct consequence.

 Let $X\in\Gamma_p^{\Pi}(A)$ and
$Y\in\Gamma(B)$. We denote by  $\psi\colon\Rr\times M\to M$ the flow
of $\rho_A(Y)$ and by $\varphi\colon\Rr\times A\to A$ the flow of
$Y^c\in \X(A)$. Using that $Y^c$ is $\tau_A$-projectable over $\rho_A(Y)$, we deduce that the following diagram
\[
\xymatrix@C=3pc@R=3pc{ A \ar[r]^{\varphi_t}\ar[d]_{\tau_A} & A \ar[d]^{\tau_A}\\
                        M\ar[r]^{\psi_t} & M}
\]
\vspace{3mm} is commutative and that the couple $(\varphi_t,\psi_t)$ is a Lie algebroid morphism (see \cite{MX2}).
Note that
\begin{equation}\label{eq:pigrande}
\Pi \smc \varphi_t(a_x)=\Pi \smc \varphi_{a_x}(t)=\Pi(a_x)
\quad\mbox{ and }\quad
\pi \smc \psi_t(x)=\pi\smc\psi_x(t)=\pi(x).
\end{equation}
On the other hand, $X$ is $\Pi$-projectable, thus there exists
$\widetilde{X}\in\Gamma(\widetilde{A})$ such that
\[
\widetilde{X}\smc \pi=\Pi\smc X.
\]
Therefore, by using \eqref{eq:pigrande}
\begin{align*}
\Pi(X(\psi_t(x))-\varphi_t(X(x)))=\widetilde{X}(\pi(\psi_t(x)))-
\Pi(X(x))=0.
\end{align*}
In consequence, there exists $Z_t\in\Gamma(B)$ such that
\[
X(\psi_t(x))-\varphi_t(X(x))=Z_t(\psi_t(x)),
\]
i.e.,
\[
(X-Z_t)(\psi_t(x))=\varphi_t(X(x)).
\]
Thus, if  $\varphi_t^\ast\colon A^\ast\to A^\ast$ is the dual map of
$\varphi_t\colon A\to A$, it follows that
\[
\widehat{X}-\widehat{Z_t}= \widehat{X}\smc \varphi_t^\ast
\]
By derivation and using \eqref{eq:completo:star:flusso} we obtain
that
\[
Y^{\ast c}(\widehat{X})=\frac{\d}{\d t}_{|t=0}(\widehat{X}\smc
\varphi_t^\ast)= -\frac{\d}{\d t}_{|t=0}\widehat{Z_t}.
\]
We denote by $Z$ the section of $B$ characterized by
$\widehat{Z}=\frac{\d}{\d t}_{|t=0}\widehat{Z_t}$. By using
\eqref{eq:completo:star}  we deduce
\[
\widehat{[X,Y]}=\widehat{Z}
\]
so that $[X,Y]\in \Gamma(B)$.

Conversely, let $X\in\Gamma(A)$ such that for all $Y\in\Gamma(B)$,
\[
[X,Y]\in \Gamma(B).
\]
We will see that $X\in\Gamma^{\Pi}_p(A)$. In order to prove it, we introduce the map
\[
\widetilde{X}\colon \widetilde{M}=M/\rho_A(B) \to \widetilde{A}=A/\mathcal{F}^B
\]
given  by $\widetilde{X}(\pi(x))=\Pi(X(x))$, which is well
defined.

 In fact, suppose that $x,x'\in M$ with $\pi(x)=\pi(x')$. Then
there exists a map $\sigma\colon [0,1]\to M$ continuous, piecewise
differentiable, tangent to $\rho_A(B)$ such that $\sigma(0)=x$ and
$\sigma(1)=x'$. So, in each piece there exists $Y\in\Gamma(B)$
such that $\sigma$ is the integral curve of $\rho_A(Y)$. Assume, without the loss of generality, that $\sigma$ is smooth and denote by $\psi_t:{\Bbb R}\times M\to M$ the flow of the vector field $\rho_A(Y)$. We have that there exists $t_0\in {\Bbb R}$ such that
\[
\psi_{t_0}(x)=x'.
\]
Now, let $\varphi:{\Bbb R}\times A\to A$ be the flow of the vector field $Y^c$. Then, for each $t\in {\Bbb R}$, the following diagram
\[
\xymatrix@C=3pc@R=3pc{ A \ar[r]^{\varphi_t}\ar[d]_{\tau_A} & A \ar[d]^{\tau_A}\\
                        M\ar[r]^{\psi_t} & M}
\]
is commutative.

On the other hand, using that $Y\in \Gamma(B),$ we deduce that there exists $Z\in \Gamma(B)$ such that
\[
[X,Y]=Z.
\]
This implies the corresponding relation between the linear maps
\[
(Y^\ast)^c(\widehat{X})=\widehat{[Y,X]}=-\widehat{Z}
\]
or equivalently
\[
\frac{\d}{\d t}_{|t=0}(\widehat{X}\smc \varphi_t^\ast)=
-\widehat{Z}.
\]
So, for each $t_1$
\begin{equation}\label{eq:zt1}
\frac{\d}{\d t}_{|t=t_1}(\widehat{X}\smc \varphi_t^\ast)=
\widehat{Z}_{t_1},
\end{equation}
where we have denoted by $Z_{t_1}$ the section of the vector bundle $\tau_B:B\to M$ which is characterized by $\widehat{Z}_{t_1}=-\widehat{Z}\smc
\varphi^\ast_{t_1}$. Since
$\widehat{X}\smc\varphi_0^\ast=\widehat{X}$, by integrating
\eqref{eq:zt1} we have
\[
\widehat{X}\smc\varphi_{t_1}^\ast=\widehat{X}+\widehat{W}_{t_1}
\]
for each $t_1\in \Rr$ with $W_{t_1}\in\Gamma(B).$  Hence,
we get the relation
\[
\varphi_{t}\smc X- X \smc \psi_t=W_{t}\smc \psi_t
\]
By applying $\Pi$, we get
\[
\Pi\smc \varphi_t\smc X= \Pi \smc X \smc \psi_t
\]
Now, since the vector field $Y^c$ on $A$ is tangent to the foliation $\mathcal{F}^B$, it follows that $\Pi\smc \varphi_t=\Pi.$ Therefore,
$$\Pi\smc X=\Pi\smc X\smc \psi_t$$
which implies that
\[
\Pi\smc X(x)= \Pi \smc X \smc \psi_{t_0}(x)=\Pi\smc X(x').
\]
In conclusion, $\widetilde{X}$ is well defined and  $X$ is
$\Pi$-projectable.
\end{proof}

\section{The Reduced nondegenerate symplectic-Nijenhuis Lie algebroid}   %
\label{sec:reduced:nondegenerate}

First of all, we will prove a result which will be useful in the sequel

\begin{prop}\label{6.0'} Let $(A,P,N)$ be a Poisson-Nijenhuis Lie algebroid. If $l$ is a positive integer then the couple $(P,N^l)$ is a Poisson-Nijenhuis structure on the Lie algebroid $A.$
\end{prop}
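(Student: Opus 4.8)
The statement asserts that if $(P,N)$ is a Poisson--Nijenhuis structure on $A$, then so is $(P,N^l)$ for every positive integer $l$. A natural strategy is induction on $l$, the base case $l=1$ being trivial. For the inductive step, I would try to deduce the three defining conditions for $(P,N^l)$ — namely (a) $N^l$ is a Nijenhuis operator, (b) $N^l\smc P^\sharp = P^\sharp\smc (N^l)^\ast$, and (c) the Magri--Morosi concomitant $\mathcal C(P,N^l)$ vanishes — from the corresponding conditions for $(P,N)$ and $(P,N^{l-1})$.

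\textbf{Step 1: $N^l$ is Nijenhuis.} This is a purely Lie-algebroid fact independent of $P$. It is classical (and a special case of the recursion-operator formalism of \cite{KM,GU1}) that powers of a Nijenhuis operator are again Nijenhuis; the cleanest argument uses that when $\mathcal T_N=0$ the deformed bracket $\br_N$ satisfies $\br_{N^l}=(\br_N)_{N^{l-1}}$ in the appropriate sense and that $N\colon A_N\to A$ is a Lie algebroid morphism, so $N^l=N\smc N^{l-1}$ is a composition of morphisms $A_{N^l}\to A_{N^{l-1}}\to A$, forcing $\mathcal T_{N^l}=0$. Alternatively one shows directly by induction that $[N^lX,N^lY]_A - N^l[X,Y]_{N^l}=0$ using $\mathcal T_N=0$ repeatedly; I would cite the literature for this rather than reproduce the computation.

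\textbf{Step 2: compatibility $N^l\smc P^\sharp=P^\sharp\smc(N^l)^\ast$.} This is immediate from the $l=1$ case by iteration: from $N\smc P^\sharp=P^\sharp\smc N^\ast$ one gets $N^l\smc P^\sharp = N^{l-1}\smc P^\sharp\smc N^\ast = \dots = P^\sharp\smc (N^\ast)^l = P^\sharp\smc (N^l)^\ast$, since $(N^\ast)^l=(N^l)^\ast$.

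\textbf{Step 3: vanishing of $\mathcal C(P,N^l)$.} This is the substantive point and the main obstacle. Recall that the vanishing of $\mathcal C(P,N)$ is what makes the Koszul bracket $\br_{NP}$ (coming from the bivector $NP$) agree with the bracket $\br_P^{N^\ast}$ obtained by deforming $\br_P$ along $N^\ast$. A well-known consequence of $(P,N)$ being Poisson--Nijenhuis is that the whole hierarchy $P_0=P, P_1=NP, P_2=N^2P,\dots$ consists of mutually compatible Poisson bivectors, and that on sections of $A^\ast$ one has the deformation relations $\br_{N^jP}=(\br_P)^{(N^\ast)^j}$ together with $(N^\ast)^j$ being a Nijenhuis operator for the Lie algebroid $A^\ast_P$. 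Concretely, since $N^j P = P^\sharp\smc (N^\ast)^j$ as bundle maps $A^\ast\to A$ (by Step 2), and $(A^\ast_P,N^\ast)$ is itself a Nijenhuis Lie algebroid when $(P,N)$ is Poisson--Nijenhuis, the bracket $\br_{N^jP}$ is exactly the $j$-fold deformation of $\br_P$ along $N^\ast$, which coincides with $\br_P^{(N^\ast)^j}=\br_P^{(N^j)^\ast}$. This says precisely that $\mathcal C(P,N^j)=\br_{N^jP}-\br_P^{(N^j)^\ast}=0$. I would carry this out by first establishing the auxiliary claim that $N^\ast\colon A^\ast_P\to A^\ast$ is a Nijenhuis operator on the Lie algebroid $A^\ast_P=(A^\ast,\br_P,\rho_A\smc P^\sharp)$ — this is the standard dual reformulation of the Poisson--Nijenhuis condition (see \cite{KM,GU1}) — and then invoking Step 1 applied to $A^\ast_P$ to conclude that $(N^\ast)^l=(N^l)^\ast$ is Nijenhuis on $A^\ast_P$, which is equivalent to $\mathcal C(P,N^l)=0$. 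Combining Steps 1--3 gives that $(P,N^l)$ is a Poisson--Nijenhuis structure, completing the induction.

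I expect Step 3 to require the most care, specifically the bookkeeping translating "$\mathcal C(P,N^l)=0$" into "$(N^l)^\ast$ is Nijenhuis on $A^\ast_P$" and back; everything else is either classical or a one-line iteration. If a more self-contained proof is wanted, one can instead run a direct induction on the identity $\mathcal C(P,N^{l})(\alpha,\beta)=\mathcal C(P,N^{l-1})(N^\ast\alpha,\beta)+N^\ast\mathcal C(P,N)(\alpha,\beta)+\text{(terms vanishing by Steps 1--2 and }\mathcal T_N=0)$, but the recursion-operator route above is cleaner and leans on results already recalled in Section~2.
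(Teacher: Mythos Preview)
Your Steps 1 and 2 are fine and match the paper's treatment. The gap is in Step 3. You assert that ``$(N^l)^\ast$ is Nijenhuis on $A^\ast_P$'' is \emph{equivalent} to $\mathcal C(P,N^l)=0$, but these are different conditions: the first says the torsion $\mathcal T^P_{(N^l)^\ast}$ vanishes, while the second says the Koszul bracket $\br_{N^lP}$ coincides with the deformed bracket $(\br_P)^{(N^l)^\ast}$. Knowing that $N^\ast$ (hence $(N^\ast)^l$) has vanishing torsion on $A^\ast_P$ only tells you that $(\br_P)^{(N^\ast)^l}$ is a Lie bracket; it does not identify that bracket with $\br_{N^lP}$. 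Your sentence ``the bracket $\br_{N^jP}$ is exactly the $j$-fold deformation of $\br_P$ along $N^\ast$'' is precisely the missing step: to pass from $\br_{N^{k}P}$ to $\br_{N^{k+1}P}$ by deforming along $N^\ast$ one needs $\mathcal C(N^kP,N)=0$, which for $k\geq 1$ is part of the hierarchy result and not something that follows from $N^\ast$ being Nijenhuis on $A^\ast_P$ alone. So as written the argument is circular unless you import the full Kosmann-Schwarzbach--Magri hierarchy theorem, in which case the proposition is an immediate corollary and no further argument is needed.

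The paper avoids this by a direct computational induction. It first reformulates $\mathcal C(P,N^l)=0$ as the identity
\[
(\lie^A_{P^\sharp\beta}N^l)(X)=P^\sharp\bigl(\lie^A_X(N^{\ast l}\beta)-\lie^A_{N^lX}\beta\bigr),\qquad \beta\in\Gamma(A^\ast),\ X\in\Gamma(A),
\]
and then proves this by induction on $l$: one expands $(\lie^A_{P^\sharp\beta}N^l)(X)$ using $N^l=N^{l-1}\smc N$, applies the inductive hypothesis and the base case, and closes the computation by repeatedly using $\mathcal T_N(N^{l-r}(P^\sharp\beta),X)=0$ to kill the residual bracket terms. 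Your suggested alternative (a direct recursion for $\mathcal C(P,N^l)$ in terms of $\mathcal C(P,N^{l-1})$ and $\mathcal C(P,N)$) is in the same spirit and would work, but you would need to actually produce and verify that identity rather than leave the ``terms vanishing by Steps 1--2 and $\mathcal T_N=0$'' unspecified.
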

\begin{proof}
It is well-known that $N^l$ is a Nijenhuis operator (see, for
instance, Lemma 1.2 in \cite{KM}).

On the other hand, it is clear that
\begin{equation}\label{(1)}
P^\sharp \smc N^{*l}=N^l\smc P^\sharp.
\end{equation}
In addition, a long straightforward computation, using \eqref{(1)}, proves that
$$C(P,N^l)(\beta,\beta')=0, \mbox{ for }\beta,\beta'\in \Gamma(A^*)$$
if and only if
\begin{equation}\label{(2)}
(\lie^A_{P^\sharp(\beta)}N^l)(X)=P^\sharp (\lie_X^AN^{*l}\beta-\lie_{N^lX}^A\beta),\;\;\; \mbox{ for }\beta\in\Gamma(A^*) \mbox{ and } X\in \Gamma(A).
\end{equation}

Thus, we must prove \eqref{(2)}.

We will proceed by induction on $l$. Note that
$$(\lie^A_{P^\sharp(\beta)}N^l)(X)=(\lie_{P^\sharp(\beta)}^AN^{l-1})(NX)  + N^{l-1}[{P^\sharp(\beta)},NX]_A-N^{l}[{P^\sharp(\beta)},X]_A$$

Therefore,
$$
\begin{array}{rcl}(\lie_{P^\sharp (\beta)}^AN^l)(X)&=&P^\sharp (\lie_{NX}^AN^{*l-1}\beta)-P^\sharp(\lie_{N^lX}^A\beta) + N^{l-1}[P^\sharp(\beta), NX]_A\\&&-N^l[P^\sharp \beta,X]_A\end{array}$$

Now, since
$$P^\sharp (\lie_{NX}^A\gamma)=P^\sharp(\lie_X^AN^*\gamma)-(\lie_{P^\sharp\gamma}^AN)(X),\;\;\; \mbox{ for }\gamma\in \Gamma(A^*),$$
we deduce that
$$\begin{array}{rcl}
(\lie_{P^\sharp(\beta)}^AN^l)(X)&=&P^\sharp (\lie_X^AN^{*l}\beta-\lie_{N^lX}^A\beta)-(\lie^A_{P^\sharp(N^{*l-1}\beta)},N)(X)
\\[5pt]&&+ N^{l-1}[P^\sharp\beta,NX]_A-N^l[P^\sharp\beta,X]_A\end{array}$$
which implies that
$$\begin{array}{rcl}
(\lie^A_{P^\sharp(\beta)}N^l)(X)&=&P^\sharp(\lie_X^AN^{*l}\beta-\lie_{N^lX}^A\beta)-
[P^\sharp(N^{*l-1}\beta),NX]_A \\[5pt]&&+ N[P^\sharp(N^{*l-1}\beta),X]_A + N^{l-1}[P^\sharp(\beta),NX]_A\\[5pt]&&-N^l[P^\sharp \beta,X]_A\\[5pt] &=&P^\sharp(\lie_X^AN^{*l}\beta-\lie_{N^{l}X}^A\beta)-[N^{l-1}(P^\sharp\beta),NX]_A \\[5pt]&&+ N[N^{l-1}(P^\sharp \beta),X]_A + N^{l-1}[P^\sharp \beta,NX]_A -N^l[P^\sharp \beta,X]_A
\end{array}$$

On the other hand, using that $0=\T_N(N^{l-r}(P^\sharp\beta),X)$ for $2\leq r   \leq l$, it follows that
$$-[N^{l-1}(P^\sharp \beta),NX]_A + N[N^{l-1}(P^\sharp \beta),X]_A + N^{l-1}[P^\sharp\beta,NX]_A- N^l[P^\sharp \beta,X]_A=0.$$

This ends the proof of the result.
\end{proof}

 Let $(A, P, N)$ be a Poisson-Nijenhuis Lie algebroid.
Consider now for any fixed $x\in M$ the endomorphism $N_x\colon
A_x\to A_x$. Recall \cite{HEU,KM} that there exists a smallest
positive integer $k$ such that the sequences of nested subspaces
\[
\im N_x \supseteq \im N^2_x\supseteq\ldots
\]
and
\[
\ker N_x \subseteq \ker N^2_x\subseteq\ldots
\]
both stabilize at rank $k$. That is,
\[
\im N_x^k=\im N_x^{k+1}=\ldots,
\]
and
\[
\ker N_x^k=\ker N_x^{k+1}=\ldots. \]
The integer $k$ is called the \emph{Riesz index} of $N$ at $x$.
\begin{lem}\label{6.0} If the Riesz index of  $N$ at $x$ is $k$ then
$$A_x=ImN_x^k\oplus Ker N_x^k$$
\end{lem}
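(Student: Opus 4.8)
The plan is to prove the two standard facts about a single endomorphism $N_x$ of the finite-dimensional vector space $A_x$: that at the Riesz index $k$ one has both $\ker N_x^k = \ker N_x^{2k}$ and $\im N_x^k = \im N_x^{2k}$, and that these two subspaces are complementary. I would first record the dimension count. For any linear map on a finite-dimensional space, the rank-nullity theorem applied to $N_x^k\colon A_x\to A_x$ gives
\[
\dim \im N_x^k + \dim \ker N_x^k = \dim A_x,
\]
so it suffices to prove $\im N_x^k \cap \ker N_x^k = \{0\}$; the dimension equality then forces $A_x = \im N_x^k \oplus \ker N_x^k$.

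To establish $\im N_x^k \cap \ker N_x^k = \{0\}$ I would exploit the defining stabilization property of the Riesz index. The key observation is that $N_x^k$ restricts to a surjective endomorphism of the subspace $W := \im N_x^k$: indeed, $N_x^k(W) = \im N_x^{2k} = \im N_x^k = W$ by the stabilization of the image chain at rank $k$. Since $W$ is finite-dimensional, a surjective endomorphism of $W$ is injective, so $N_x^k$ is injective on $W$. Now take $v \in \im N_x^k \cap \ker N_x^k$: then $v \in W$ and $N_x^k v = 0$, and injectivity of $N_x^k|_W$ forces $v = 0$. This gives the trivial-intersection claim, and combined with the dimension count above we conclude $A_x = \im N_x^k \oplus \ker N_x^k$.

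I do not expect any serious obstacle here: the statement is purely linear algebra about a single endomorphism, and the Poisson-Nijenhuis structure plays no role whatsoever (only the existence of the Riesz index $k$, which was recalled just before the lemma, is used). The one point that merits a careful sentence is the passage $N_x^k(\im N_x^k) = \im N_x^{2k}$ and the identification $\im N_x^{2k} = \im N_x^k$ — the latter is exactly the stabilization $\im N_x^k = \im N_x^{k+1} = \cdots$ iterated, so that $\im N_x^j = \im N_x^k$ for all $j \ge k$, in particular for $j = 2k$. Everything else is the standard rank-nullity argument. (This is, of course, the classical Fitting decomposition of a vector space with respect to a nilpotent-plus-invertible splitting of an endomorphism.)
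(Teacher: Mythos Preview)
Your proof is correct and follows the same approach as the paper, which simply records the rank--nullity identity $\dim \im N_x^k + \dim \ker N_x^k = \dim A_x$ and leaves the rest to the reader. Your argument is in fact more complete: you explicitly verify $\im N_x^k \cap \ker N_x^k = \{0\}$ via the stabilization $\im N_x^{2k} = \im N_x^k$, a step the paper does not spell out.
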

\begin{proof}
It is clear that
$$\dim(ImN_x^k) + \dim (\ker N_x^k)=\dim A_x$$
\end{proof}

Next, we will prove the following result

\begin{prop}\label{prop:reduction:PN:alg}
Let $(A, P, N)$ be a Poisson-Nijenhuis Lie algebroid with constant
Riesz index $k$ and such that the dimension of the subspace $\ker N_x^k$ (respectively, $\im N_x^k$) is constant, for all $x\in M$. Then:
\begin{itemize}
\item[$i)$] The dimension of the subspace $\im N_x^k$ (respectively, $\ker N_x^k$) is constant, for all $x\in M.$
\item[$ii)$] The vector subbundles $\ker N^k$ and $\im N^k$ are Lie subalgebroids of $A.$
\end{itemize}
\end{prop}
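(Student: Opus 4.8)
The plan is to treat the two assertions separately: $i)$ is a pointwise dimension count, and $ii)$ reduces, after replacing $N$ by $N^{k}$, to the vanishing of the torsion of a Nijenhuis operator together with the stabilization property of the Riesz index.

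For $i)$, I would invoke Lemma \ref{6.0}: since the Riesz index of $N$ equals $k$ at every point, $A_{x}=\im N_{x}^{k}\oplus\ker N_{x}^{k}$, so $\dim(\im N_{x}^{k})+\dim(\ker N_{x}^{k})=\rank A$ for all $x\in M$. As $\rank A$ is constant, the constancy of either $\dim(\ker N_{x}^{k})$ or $\dim(\im N_{x}^{k})$ forces the constancy of the other; equivalently, the rank of the bundle map $N^{k}\colon A\to A$ is constant. By the constant-rank theorem for vector bundle morphisms it then follows that both $\ker N^{k}$ and $\im N^{k}$ are vector subbundles of $A$.

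For $ii)$, the key point is that $N^{k}$ is again a Nijenhuis operator (by Proposition \ref{6.0'}, or directly by \cite{KM}), so $\T_{N^{k}}=0$, i.e.
\[
[N^{k}X,N^{k}Y]_{A}=N^{k}\bigl([N^{k}X,Y]_{A}+[X,N^{k}Y]_{A}-N^{k}[X,Y]_{A}\bigr),\qquad X,Y\in\Gamma(A).
\]
Since $\im N^{k}$ is a subbundle of constant rank, every section of $\im N^{k}$ is locally of the form $N^{k}X$ with $X$ a local section of $A$; the identity above shows that the bracket of two such sections again lies in $\Gamma(\im N^{k})$, so $\Gamma(\im N^{k})$ is closed under $\br_{A}$. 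For $\ker N^{k}$, if $X,Y\in\Gamma(\ker N^{k})$ then $N^{k}X=N^{k}Y=0$ and the same identity collapses to $N^{2k}[X,Y]_{A}=0$; because the Riesz index equals $k$ we have $\ker N^{2k}=\ker N^{k}$, whence $N^{k}[X,Y]_{A}=0$ and $[X,Y]_{A}\in\Gamma(\ker N^{k})$. Finally, for any vector subbundle $B\subseteq A$ over $M$ with $\Gamma(B)$ closed under $\br_{A}$, the restricted bracket together with the restricted anchor $\rho_{A}|_{B}$ defines a Lie algebroid structure on $B$ (the Leibniz rule is inherited from $A$), and the fibrewise injective inclusion $B\hookrightarrow A$ over $\mathrm{id}_{M}$ is then a Lie subalgebroid in the sense recalled earlier; applying this to $B=\ker N^{k}$ and $B=\im N^{k}$ completes the proof. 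The only step needing genuine care is ensuring that $\ker N^{k}$ and $\im N^{k}$ really are subbundles (constancy of $\rank N^{k}$) and that sections of $\im N^{k}$ are locally images under $N^{k}$; once this is in place, involutivity is a two-line consequence of $\T_{N^{k}}=0$ and $\ker N^{2k}=\ker N^{k}$.
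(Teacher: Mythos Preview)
Your proof is correct and follows essentially the same route as the paper: part $i)$ via the direct-sum decomposition of Lemma \ref{6.0}, and part $ii)$ via the vanishing torsion of $N^{k}$ (equation \eqref{Lehmann} in the paper) together with $\ker N^{2k}=\ker N^{k}$. You are in fact slightly more explicit than the paper on two points it leaves implicit---that constancy of $\rank N^{k}$ makes $\ker N^{k}$ and $\im N^{k}$ genuine subbundles, and that sections of $\im N^{k}$ are locally of the form $N^{k}X$---but the argument is otherwise identical.
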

\begin{proof}
$i)$ it follows from Lemma \ref{6.0}.
\medskip

Since $N^k$  is a Nijenhuis operator, we have that
\begin{equation}\label{Lehmann}
[N^k X,N^k Y]_A=N^k [X,Y]_{N^k},  \qquad\mbox{for any } X,Y\in
\Gamma(A),
\end{equation}
where $\br_{N^k}$ is the bracket defined as in
\eqref{eq:deformed:bracket}. Thus, $\im N^k$ is a Lie subalgebroid of $A$.

Now, suppose that $X,Y\in \Gamma(A)$ are sections of $A$ satisfying  $N^k
X=N^k Y=0$. Then, using  \eqref{eq:deformed:bracket} and \eqref{Lehmann}, we deduce that
\[
0=[N^kX,N^kY]_A=N^k [X,Y]_{N^k}=-N^{2k} [X,Y]_A.
\]
Hence, $[X,Y]_A\in\Gamma(\ker N^{2k})=\Gamma(\ker N^k)$. This implies that $\ker N^k$ is a Lie subalgebroid of $A$.

\end{proof}

Let $(A, P, N)$ be a Poisson-Nijenhuis Lie algebroid with constant
Riesz index $k$. Suppose that the dimension of the subspace $\ker N_x^k$ is constant, for all $x\in M.$ Then, we may consider
the Lie subalgebroid $\ker N^k$ of $A$ and the corresponding generalized foliations $\rho_A(\ker N^k)$ on $M$ and
${\mathcal F}^{\ker N^k}$ on $A$.

As in Section \ref{section5}, we will assume that these foliations are regular and that the condition ${\mathcal F}^{\ker N^k}$ holds, that is, if $a_x,a_x'\in A_x$ we have that
$$a'_x-a_x\in \ker N_x^k\Leftrightarrow a'_x \mbox{ and } a_x \mbox{ belong to the same leaf of the foliation ${\mathcal F}^{\ker N^k}.$}$$

Under these conditions, the space $\widetilde{A}=A/{\mathcal F}^{\ker N^k}$ of the leaves of the foliation ${\mathcal F}^{\ker N^k}$ is a Lie algebroid over the quotient manifold $\widetilde{M}=M/\rho_A({\ker N^k})$ and the canonical projections $\Pi:A\to \widetilde{A}= A/{\mathcal F}^{\ker N^k}$ and  $\pi:M\to \widetilde{M}=M/\rho_A(\ker N^k)$ define a Lie algebroid epimorphism

\[
\xymatrix@C=4pc@R=4pc{ A \ar[r]^(.35){\Pi}\ar[d]^{\tau_A} & \widetilde{A}=A/\mathcal{F}^{\ker N^k}\phantom{AA} \ar[d]^{\tau_{\widetilde{A}}}\\
                       M\ar[r]^(.35){\pi} & \widetilde{M}=M/\rho_A({\ker N^k})}
\]
Note that $\ker \Pi=\ker N^k$ and thus,
$$V\pi=\rho_A(\ker N^k)=\rho_A(\ker \Pi).$$

Next, we will prove that $P$ and $N$ are $\Pi$-projectable. Indeed, we have that
\[
N(\Gamma(\ker N^k))\subseteq \Gamma(\ker N^k).
\]
Moreover, if $\xi\in \Gamma(\ker N^k)$ one may see that $\lie^A_{\xi}N(\Gamma_p(A))\subseteq \Gamma(\ker N^k)$.
In order to prove this relation, we recall that
\[
\Gamma_p(A)=\{X\in \Gamma(A) \;|\; [X,\xi]_A\in\Gamma(\ker
N^k)\quad \forall\xi\in \Gamma(\ker N^k)\}.
\]
Now,  if $X\in \Gamma_p(A)$ and $\xi\in\Gamma(\ker N^k)$,  we get
\[
N^k(\lie^A_{\xi}N(X))=N^k([\xi,NX]_A-N[\xi,X]_A)=N^k[\xi,NX]_A.
\]
By using the fact that $N$ has zero torsion, it follows that
\begin{equation}\label{53'}
0=N^{k-1}(\T_N(N^{k-1}\xi,X))=-N^k[N^{k-1}\xi, NX]_A
\end{equation}

Thus, from (\ref{53'}), we deduce that
$$0=N^k(\T_N(N^{k-2}\xi,X))=-N^{k+1}[N^{k-2}\xi,NX]_A$$
and, since $\ker N^{k+1}=\ker N^k,$ we obtain that
$$N^k[N^{k-2}\xi, NX]_A=0$$
Proceeding in a similar way, we may prove that
$$N^k[N^{k-3}\xi, NX]_A=N^k[N^{k-4}\xi,NX]=\dots =N^k[\xi,NX]=0.$$
Therefore,
$\lie^A_{\xi}N(X)\in \Gamma(\ker N^k)$ and $N$ is $\Pi$-projectable (see Proposition \ref{prop:projectable:N}).

 To see that $P$ is $\Pi$-projectable, we have to prove that (see Proposition \ref{prop:projectable:bivector})
\begin{equation}\label{eq:lieP}
[{\xi},P]_A^\sharp(\Gamma_p(A^\ast))\subseteq \Gamma(\ker N^k)
\qquad \forall\xi\in \Gamma(\ker N^k).
\end{equation}
From Proposition \ref{prop:projectable:sections},
\[
\Gamma_p(A^\ast)=\{\al\in \Gamma(A^\ast) \;|\;
\lie^A_{\xi}\al=0,\quad\al(\xi)=0,\quad \forall\xi\in \Gamma(\ker
N^k)\}.
\]
If  $\al\in \Gamma_p(A^\ast)$ then
\begin{equation*}
\begin{split}
N^k(([{\xi}, P]_A)^\sharp(\al))
&=N^k(i_{\al}[{\xi},P]_A)=N^k([{\xi},i_{\al}P]_A-
i_{\lie^A_{\xi}\al}P)\\ &=N^k[\xi,P^\sharp \al] _A
=-[{P^\sharp \al},N^k\xi]_A+(\lie^A_{P^\sharp
\al}N^k)(\xi)=(\lie^A_{P^\sharp \al}N^k)(\xi).
\end{split}
\end{equation*}
Hence, using Proposition \ref{6.0'}, we deduce that
$$N^k([\xi,P]_A^\sharp (\alpha))=P^\sharp (\lie_\xi^AN^{*k}\alpha).$$

On the other hand, since $\al\in \Gamma_p(A^\ast)$, we get
\begin{equation*}
\begin{split}
\lie^A_{\xi} (N^{\ast k} \al)(X)&=\rho_A(\xi)(\al(N^k X))-
\al(N^k[\xi,X]_A)\\&=\lie^A_{\xi}
\al(N^k X)+ \al([\xi,N^k X]_A-N^k[\xi,X]_A)
\\&=\al([\xi,N^k X]_A-N^k[\xi,X]_A).
\end{split}
\end{equation*}
Moreover, since the torsion of $N^k$ is zero, we have that
\[
0=\T_{N^k}(\xi,X)=-N^k[\xi,N^kX]_A+N^{2k}[\xi,X]_A,
\]
that is,
$$[\xi,N^kX]_A-N^k[\xi,X]_A\in \Gamma(\ker N^k).$$
This implies that
$$\alpha([\xi,N^kX]_A-N^k[\xi,X]_A)=0$$
Hence,
$$N^k([\xi, P]_A^\sharp (\alpha))=P^\sharp (\lie_\xi^AN^{*k}\alpha)=0$$
and \eqref{eq:lieP} holds.

 Therefore, using Theorem  \ref{thm:PN:epi}, we have the following result.

\begin{thm}
Let $(A,\br_A,\rho_A,P,N)$ be a Poisson-Nijenhuis Lie
algebroid such that
\begin{itemize}
\item[$i)$] $N$ has constant Riesz index $k$;
\item[$ii)$] The dimension of the subspace $\ker N_x^k$ is constant, for all $x\in M$ (thus, $B=\ker N^k$ is a vector subbundle of $A$) and
\item[$iii)$] $\rho_A(B)$ and $\mathcal{F}^B$ are regular
foliations and the condition $\mathcal{F}^B$ is satisfied for
$B=\ker N^k$.
\end{itemize}
Then, we may induce a Poisson-Nijenhuis Lie algebroid structure
\linebreak
$(\br_{\widetilde{A}},\rho_{\widetilde{A}},\widetilde{P},\widetilde{N})$
on $\widetilde{A}=A/\mathcal{F}^B$ such that  $\Pi:A\to \widetilde{A}=A/\mathcal{F}^B$  is a Poisson-Nijenhuis Lie algebroid epimorphism over $\pi:M\to \widetilde{M}=M/\rho_A(B).$
\end{thm}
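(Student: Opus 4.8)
The plan is to assemble results already established in Sections~\ref{sec:reduction:by:epimorphism} and \ref{section5} together with the computations of the present section. First I would invoke Proposition~\ref{prop:reduction:PN:alg}: hypotheses $i)$ and $ii)$ are exactly its assumptions, so $B=\ker N^k$ is a vector subbundle of $A$ which is moreover a Lie subalgebroid. This places us in the framework of Section~\ref{section5} applied to the Lie subalgebroid $B$.

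Next, using hypothesis $iii)$, I would apply Propositions~\ref{prop:reduced:is:vector:bundle} and \ref{prop:quotient:alg} to conclude that $\widetilde{A}=A/\mathcal{F}^B$ carries a Lie algebroid structure $(\br_{\widetilde{A}},\rho_{\widetilde{A}})$ over $\widetilde{M}=M/\rho_A(B)$ for which the canonical projections $(\Pi,\pi)$ form a Lie algebroid epimorphism, with $\ker\Pi=B=\ker N^k$ and hence $V\pi=\rho_A(\ker N^k)=\rho_A(\ker\Pi)$. The last equality is what permits using the ``if and only if'' halves of the projectability criteria of Section~\ref{sec:reduction:by:epimorphism}.

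The substantive step is to verify that both $P$ and $N$ are $\Pi$-projectable; this is precisely what the paragraphs preceding the statement carry out. For $N$ one checks $N(\Gamma(\ker N^k))\subseteq\Gamma(\ker N^k)$ and $\lie^A_\xi N(\Gamma_p(A))\subseteq\Gamma(\ker N^k)$ for every $\xi\in\Gamma(\ker N^k)$, using the vanishing of $\T_N$ together with $\ker N^{k+1}=\ker N^k$ to inductively annihilate $N^k[N^{k-r}\xi,NX]_A$, and then appeals to Proposition~\ref{prop:projectable:N}. For $P$ one uses the description of $\Gamma_p(A^\ast)$ from Proposition~\ref{prop:projectable:sections}, rewrites $N^k(([\xi,P]_A)^\sharp\alpha)=(\lie^A_{P^\sharp\alpha}N^k)(\xi)$, invokes Proposition~\ref{6.0'} to recast this as $P^\sharp(\lie^A_\xi N^{\ast k}\alpha)$, and shows the latter vanishes by another application of the torsion-free identity for $N^k$; Proposition~\ref{prop:projectable:bivector} then yields $\Pi$-projectability of $P$. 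I expect this projectability verification --- the inductive unwinding of the torsion identities for powers of $N$ --- to be the only genuine obstacle, and it has already been carried out above.

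Finally, I would apply Theorem~\ref{thm:PN:epi}: since $(P,N)$ is a Poisson-Nijenhuis structure on $A$ with $P$ and $N$ both $\Pi$-projectable, the induced pair $(\widetilde{P},\widetilde{N})$ --- with $\widetilde{P}$ defined by \eqref{eq:projection:Poisson:tensor} and $\widetilde{N}$ by \eqref{eq:N:tilde} --- is a Poisson-Nijenhuis structure on $\widetilde{A}$. By those same definitions, $(\Pi,\pi)$ satisfies the two intertwining relations of Definition~\ref{def:PN:Lie:alg:morphism}, so $(\Pi,\pi)\colon A\to\widetilde{A}$ is a Poisson-Nijenhuis Lie algebroid epimorphism over $\pi\colon M\to\widetilde{M}$, completing the second and final step of the reduction.
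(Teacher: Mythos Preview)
Your proposal is correct and follows essentially the same approach as the paper: the paper carries out exactly the sequence you describe, doing all the work in the paragraphs preceding the theorem (Lie subalgebroid via Proposition~\ref{prop:reduction:PN:alg}, quotient Lie algebroid via Propositions~\ref{prop:reduced:is:vector:bundle} and~\ref{prop:quotient:alg}, projectability of $N$ and $P$ via the torsion-identity computations you outline together with Proposition~\ref{6.0'}), and then simply invokes Theorem~\ref{thm:PN:epi} to conclude. Your explicit mention of checking Definition~\ref{def:PN:Lie:alg:morphism} at the end is a small clarification the paper leaves implicit, but the substance is identical.
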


In the particular case of symplectic-Nijenhuis Lie algebroids, we may prove the following result
\begin{thm}\label{thm:nondegenerate:N}
Let $(A,\br_A,\rho_A, \Omega,N)$ be a symplectic-Nijenhuis Lie
algebroid on the manifold $M$ such that
\begin{itemize}
\item[$i)$] $N$ has constant Riesz index $k$;
\item[$ii)$] The dimension of the subspace $\ker N_x^k$ is constant, for all $x\in M$ (thus, $B=\ker N^k$ is a vector subbundle of $A$) and
\item[$iii)$]$\rho_A(B)$ and $\mathcal{F}^B$ are regular
foliations and the condition $\mathcal{F}^B$ is satisfied for
$B=\ker N^k$.
\end{itemize}
Then, we may induce a symplectic-Nijenhuis Lie algebroid structure
on $\widetilde{A}$ with a nondegenerate  Nijenhuis tensor,
such that the couple $\Pi:A\to \widetilde{A}=A/\mathcal{F}^B$  and $\pi:M\to \tilde{M}=M/\rho_A(B)$ is a Poisson-Nijenhuis Lie algebroid epimorphism.
\end{thm}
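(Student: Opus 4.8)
The plan is to leverage the Poisson-Nijenhuis reduction established immediately above: since $B=\ker N^k$ is a vector subbundle, $\rho_A(B)$ and $\mathcal{F}^B$ are regular, and the condition $\mathcal{F}^B$ holds, we already obtain a Poisson-Nijenhuis Lie algebroid structure $(\br_{\widetilde{A}},\rho_{\widetilde{A}},\widetilde{P},\widetilde{N})$ on $\widetilde{A}=A/\mathcal{F}^B$ with $\Pi\colon A\to\widetilde{A}$ a Poisson-Nijenhuis Lie algebroid epimorphism over $\pi$. Thus it remains only to show that $\widetilde{P}$ is nondegenerate --- in which case it comes from a symplectic section $\widetilde{\Omega}$ on $\widetilde{A}$, closedness of $\widetilde{\Omega}$ being automatic for a nondegenerate Poisson bisection (one sets $\widetilde{\Omega}^\flat=-(\widetilde{P}^\sharp)^{-1}$ and derives $\d^{\widetilde A}\widetilde{\Omega}=0$ from $[\widetilde{P},\widetilde{P}]_{\widetilde A}=0$) --- and that $\widetilde{N}$ is kernel-free.

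First I would prove nondegeneracy of $\widetilde{N}$. Fix $x\in M$; by Proposition \ref{prop:reduced:is:vector:bundle} the fiber $\widetilde{A}_{\pi(x)}$ is identified via $\Pi_x$ with $A_x/B_x$, and from \eqref{eq:N:tilde} together with $N(\Gamma(\ker N^k))\subseteq\Gamma(\ker N^k)$ (equivalently $N_x(B_x)\subseteq B_x$) one checks $\widetilde{N}_{\pi(x)}\circ\Pi_x=\Pi_x\circ N_x$. If $\widetilde{N}_{\pi(x)}(\Pi_x(a_x))=0$ then $N_x a_x\in\ker\Pi_x=B_x=\ker N_x^k$, so $N_x^{k+1}a_x=0$; since the Riesz index is constant equal to $k$ we have $\ker N_x^{k+1}=\ker N_x^k=B_x$, hence $a_x\in B_x$ and $\Pi_x(a_x)=0$. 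So $\widetilde{N}_{\pi(x)}$ is injective for every $x$, i.e.\ $\widetilde{N}$ is nondegenerate.

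Next, for $\widetilde{P}$: since $\ker\Pi=B=\ker N^k$, the dual $\Pi_x^\ast\colon\widetilde{A}^\ast_{\pi(x)}\to A_x^\ast$ is injective with image the annihilator $(\ker N_x^k)^\circ$. By \eqref{eq:projection:Poisson:tensor}, $\widetilde{P}^\sharp$ fails to be injective at $\pi(x)$ exactly when there is $\widetilde{\al}\neq 0$ with $P^\sharp(\Pi_x^\ast\widetilde{\al})\in\ker\Pi_x=\ker N_x^k$. Using that $P^\sharp$ is a bundle isomorphism (as $P=P_\Omega$) together with $N^k\circ P^\sharp=P^\sharp\circ N^{\ast k}$ (see \eqref{(1)} in the proof of Proposition \ref{6.0'}), this forces $N_x^{\ast k}(\Pi_x^\ast\widetilde{\al})=0$, i.e.\ $\Pi_x^\ast\widetilde{\al}\in\ker N_x^{\ast k}=(\im N_x^k)^\circ$. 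But $\Pi_x^\ast\widetilde{\al}$ also lies in $(\ker N_x^k)^\circ$, and by Lemma \ref{6.0} $A_x=\im N_x^k\oplus\ker N_x^k$, so $(\im N_x^k)^\circ\cap(\ker N_x^k)^\circ=(\im N_x^k+\ker N_x^k)^\circ=\{0\}$; hence $\Pi_x^\ast\widetilde{\al}=0$ and, by injectivity of $\Pi_x^\ast$, $\widetilde{\al}=0$, a contradiction. So $\widetilde{P}$ is nondegenerate, and there is a symplectic $\widetilde{\Omega}$ on $\widetilde{A}$ with $\widetilde{P}=P_{\widetilde{\Omega}}$.

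Combining the two, $(\br_{\widetilde{A}},\rho_{\widetilde{A}},\widetilde{\Omega},\widetilde{N})$ is a symplectic-Nijenhuis Lie algebroid with nondegenerate Nijenhuis tensor, and $(\Pi,\pi)$ is a Poisson-Nijenhuis Lie algebroid epimorphism by the preceding theorem. I expect the nondegeneracy of $\widetilde{P}$ to be the delicate point: one must dualize the quotient correctly (identifying $\widetilde{A}^\ast$ with $(\ker N^k)^\circ$), track the interaction of $P^\sharp$ with both $N^k$ and its transpose, and pass the splitting $A_x=\im N_x^k\oplus\ker N_x^k$ to annihilators --- which is precisely where constancy of the Riesz index and of $\dim\ker N^k$ enters.
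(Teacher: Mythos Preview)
Your proof is correct and follows the same overall strategy as the paper: invoke the preceding Poisson--Nijenhuis reduction theorem and then verify fiberwise that $\widetilde{N}$ and $\widetilde{P}$ are nondegenerate. Your argument for $\widetilde{N}$ is identical to the paper's.

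For $\widetilde{P}$ there is a small but genuine difference in packaging. From $N_x^{\ast k}(\Pi_x^\ast\widetilde{\al})=0$ the paper concludes by rewriting this as $\Pi_x^\ast(\widetilde{N}^{\ast k}_{\pi(x)}\widetilde{\al})=0$ (using $\Pi\smc N=\widetilde{N}\smc\Pi$) and then invoking injectivity of $\Pi_x^\ast$ together with the already-established nondegeneracy of $\widetilde{N}$. You instead appeal directly to the Fitting decomposition of Lemma~\ref{6.0}: $\Pi_x^\ast\widetilde{\al}$ lies in both $(\ker N_x^k)^\circ$ and $(\im N_x^k)^\circ$, whose intersection is zero. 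Your route is slightly more self-contained in that it does not logically require proving $\widetilde{N}$ nondegenerate first, and it makes explicit where the splitting $A_x=\im N_x^k\oplus\ker N_x^k$ is used; the paper's route is a bit shorter once $\widetilde{N}$ is in hand. The two arguments are equivalent linear algebra.
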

\begin{proof} Denote by $(\widetilde{P},\widetilde{N})$ the Poisson-Nijenhuis structure which is defined in the previous theorem.
It remains to prove that $\widetilde{P}$ and $\widetilde{N}$ are
nondegenerate.

Firstly, we show that $\widetilde{N}$ is nondegenerate, i.e. that $\widetilde{N}_{\pi(x)}\colon
\tau_{\widetilde{A}}^{-1}(\pi(x)) \to
\tau_{\widetilde{A}}^{-1}(\pi(x))$ is an isomorphism, for all $x\in M$. Consider the
following diagram
\[
\xymatrix@C=3pc@R=3pc{ A_x \ar[r]^{N_x} \ar[d]_{\overline{\Pi}_x}
                        \ar@/_3pc/[dd]_{\Pi_x}
                        & A_x \ar[d]^{\overline{\Pi}_x}\ar@/^3pc/[dd]^{\Pi_x}\\
                        A_x/\ker N^k_x \ar[r]^{\overline{N}_x}\ar[d]_{\widetilde{\Pi}_x}
                        & A_x/\ker N^k_x \ar[d]^{\widetilde{\Pi}_x}\\
                        \tau_{\widetilde{A}}^{-1}(\pi(x))\ar[r]^{\widetilde{N}_{\pi(x)}}
                        &\tau_{\widetilde{A}}^{-1}(\pi(x))}
\]
\vspace{3mm}

\noindent where $\overline{\Pi}_x$ and $\widetilde{\Pi}_x$ are
defined as in Section \ref{section5}.
Assume that $\widetilde{N}_{\pi(x)}(\Pi_x(a_x))=0$. Then,
\[
\widetilde{\Pi}_x\overline{N}_x(\overline{\Pi}_x(a_x))=0.
\]
Since $\widetilde{\Pi}_x\colon A_x/\ker N^k_x\to
\tau_{\widetilde{A}}^{-1}(\pi(x))$ is an isomorphism, we deduce
that
\[
\overline{N}_x(\overline{\Pi}_x(a_x))=0
\]
or, equivalently $\overline{\Pi}_x N_x(a_x)=0,$ i.e.
\[
a_x\in \ker N^{k+1}_x=\ker N^k_x.
\]
It follows that
\[
\Pi_x(a_x)=\widetilde{\Pi}_x\overline{\Pi}_x(a_x)=0.
\]
In consequence, $\widetilde{N}_{\pi(x)}$ is injective and thus,  it is bijective.

Now we show that $\widetilde{P}$ is nondegenerate. Denote by $P$ the Poisson bisection associated with $\Omega.$ Let
$\widetilde{\al}_{\pi(x)}\in \widetilde{A}^{\ast}_{\pi(x)}$ be
such that
\[
0=\widetilde{P}_{\pi(x)}^\sharp(\widetilde{\al}_{\pi(x)})=\Pi_x
P_x^\sharp(\Pi_{x}^\ast \widetilde{\al}_{\pi(x)}).
\]
Using that $\widetilde{\Pi}_x\colon A_x/\ker N_x\to
\tau_{\widetilde{A}}^{-1}(\pi(x))$ is an isomorphism, we deduce
that
\[
\overline{\Pi}_x P_x^\sharp(\Pi_{x}^\ast
\widetilde{\al}_{\pi(x)})=0,
\]
i.e. $P_x^\sharp(\Pi_{x}^\ast \widetilde{\al}_{\pi(x)})\in
\ker N^k_x$. It follows that
\[
0= N^k_x P_x^\sharp(\Pi_{x}^\ast \widetilde{\al}_{\pi(x)})=
P_x^\sharp N^{\ast k}_x(\Pi_{x}^\ast
\widetilde{\al}_{\pi(x)}).
\]
Since $P_x$ is nondegenerate,
\[
N^{\ast k}_x(\Pi_{x}^\ast \widetilde{\al}_{\pi(x)})=0.
\]
Note that $N_x^{\ast k} (\Pi_{x}^\ast
\widetilde{\al}_{\pi(x)})= \widetilde{N}^{\ast k}_{\pi(x)}
(\widetilde{\al}_{\pi(x)})$ and that $\widetilde{N}$ is
nondegenerate. Hence, we deduce that $\widetilde{\al}_{\pi(x)}=0$.
\end{proof}
Under  the hypotheses of the previous theorem,  we will denote by
$\widetilde{\Omega}$ the symplectic section defined by
$\widetilde{\Omega}^\flat=-(\widetilde{P^\sharp})^{-1}$.

We summarize  the two steps of the reduction procedure given in
 Theorems
\ref{thm:SN:Lie:algebroid}  and \ref{thm:nondegenerate:N} in the following theorem.

\begin{thm}\label{thm:symplectic:Nijenhuis:nondegenerate:N}
Let $(A,\br_A,\rho_A, P,N)$ be a Poisson-Nijenhuis Lie algebroid
such that
\begin{enumerate}
\item[$i)$]
The Poisson structure $P$ has constant rank in the leaves of the
foliation $D=\rho_{A}(P^\sharp(A^\ast))$.
\end{enumerate}
\noindent If $L$ is a leaf of $D$, then, we have a
symplectic-Nijenhuis Lie algebroid structure
$(\br_{A_L},\rho_{A_L},\Omega_L, N_L)$ on
$A_L=P^\sharp(A^\ast)_{|L}\to L$.

Assume, moreover, that
\begin{enumerate}
\item[$ii)$] The induced Nijenhuis tensor $N_L:A_L\to A_L$ has constant Riesz index $k$;
\item[$iii)$] The dimension of the subspace $B_x=\ker N_x^k$ is constant, for all $x\in L$ (thus, $B=\ker N_L^k$ is a vector subbundle of $A$);
\item[$iii)$] The foliations $\rho_A(B)$ and $\mathcal{F}^B$ are regular, where
$$(\mathcal{F}^B)_a=\{X^c(a)+Y^v(a)/X,Y\in \Gamma(B)\},\mbox{ for }a\in A_L$$
\item[$iv)$] \emph{(condition $\mathcal{F}^{B})$}\; For all $x\in L$, $a_x-a'_x\in B_x$ if $a_x$ and $a'_x$ belong to the same leaf of the
foliation $\mathcal{F}^{B}$. 
\end{enumerate}
Then, we obtain a symplectic-Nijenhuis Lie algebroid structure
\linebreak
$(\br_{\widetilde{A_L}},\rho_{\widetilde{A_L}},\widetilde{\Omega_L},\widetilde{N_L})$
on the vector bundle
$\widetilde{A_L}=A_L/\mathcal{F}^{B}\to\widetilde{L}=L/\rho_{A_L}(B)$ with $\widetilde{N_L}$ nondegenerate.
\end{thm}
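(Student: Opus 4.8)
The plan is to deduce the statement by composing the two reduction steps already established, applying Theorem~\ref{thm:SN:Lie:algebroid} for the first half and Theorem~\ref{thm:nondegenerate:N} for the second.

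For the first half I would simply invoke Theorem~\ref{thm:SN:Lie:algebroid}: hypothesis $(i)$ above is verbatim the hypothesis of that theorem, so it yields, on each leaf $L$ of $D=\rho_A(P^\sharp(A^\ast))$, a symplectic-Nijenhuis Lie algebroid $(\br_{A_L},\rho_{A_L},\Omega_L,N_L)$ on $A_L=P^\sharp(A^\ast)_{|L}\to L$, where $\Omega_L$ is the $2$-section \eqref{eq:omega:L}, $N_L$ is the operator \eqref{eq:N_L:N}, and $\rho_{A_L}=(\rho_A)_{|A_L}$. Write $P_L$ for the Poisson structure attached to $\Omega_L$ by $P_L^\sharp=-(\Omega_L^\flat)^{-1}$.

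For the second half I would feed the symplectic-Nijenhuis Lie algebroid $(A_L,\br_{A_L},\rho_{A_L},\Omega_L,N_L)$, over the base manifold $L$, into Theorem~\ref{thm:nondegenerate:N}, with $A_L$, $L$ and $B=\ker N_L^k$ playing the roles of $A$, $M$ and $B$ there. The hypotheses $(ii)$, $(iii)$ and $(iv)$ of the present statement are exactly the hypotheses of Theorem~\ref{thm:nondegenerate:N} for this algebroid: $N_L$ has constant Riesz index $k$; $B=\ker N_L^k$ has constant fibre dimension, hence is a vector subbundle (of $A_L$); and the generalized foliations $\rho_{A_L}(B)$ on $L$ and $\mathcal{F}^B$ on $A_L$ are regular with $B$ satisfying the condition $\mathcal{F}^B$. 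One must keep in mind that these objects are built from the \emph{reduced} data: $\rho_{A_L}(B)=\rho_A(B)$ because $\rho_{A_L}$ is the restriction of $\rho_A$, and the complete and vertical lifts $X^c$, $Y^v$ entering $\mathcal{F}^B$ are the lifts to $A_L$ relative to the Lie algebroid structure $(\br_{A_L},\rho_{A_L})$. With this understood, Theorem~\ref{thm:nondegenerate:N} produces a symplectic-Nijenhuis Lie algebroid structure $(\br_{\widetilde{A_L}},\rho_{\widetilde{A_L}},\widetilde{\Omega_L},\widetilde{N_L})$ on $\widetilde{A_L}=A_L/\mathcal{F}^B\to\widetilde{L}=L/\rho_{A_L}(B)$, with $\widetilde{N_L}$ nondegenerate and $\widetilde{\Omega_L}^\flat=-(\widetilde{P_L}^\sharp)^{-1}$, $\widetilde{P_L}$ being the projection of $P_L$ along the epimorphism $\Pi\colon A_L\to A_L/\mathcal{F}^B$. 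This is precisely the asserted conclusion.

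So the proof introduces no new computation; it is the concatenation of Theorems~\ref{thm:SN:Lie:algebroid} and~\ref{thm:nondegenerate:N}. The only point I expect to require care — and would therefore write out explicitly — is the bookkeeping at the junction of the two steps: verifying that the subbundle $B=\ker N_L^k$ and the foliations $\rho_{A_L}(B)$, $\mathcal{F}^B$ occurring in $(ii)$–$(iv)$ are genuinely those associated with the Lie algebroid $A_L$ (not with $A$), so that the hypotheses of Theorem~\ref{thm:nondegenerate:N} hold literally. Once that identification is in place, the two reductions compose and the theorem follows.
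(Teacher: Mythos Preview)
Your proposal is correct and matches the paper's approach exactly: the paper presents this theorem explicitly as a summary of the two reduction steps given in Theorems~\ref{thm:SN:Lie:algebroid} and~\ref{thm:nondegenerate:N}, with no additional argument. Your attention to the bookkeeping at the junction (that $B$, $\rho_{A_L}(B)$ and $\mathcal{F}^B$ are to be read relative to the algebroid $A_L$) is apt and is the only point requiring care.
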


We deal now with the particular case of a Poisson-Nijenhuis manifold $(M, \Lambda,N)$ such that $\Lambda^\sharp: A^*\to A$ has constant rank on each leaf $L$ of the characteristic foliation  $D=\Lambda^\sharp(T^*M)$ of $\Lambda.$ If we consider the Poisson-Nijenhuis Lie algebroid $(TM,\Lambda,N)$, then we can induce a symplectic-Nijenhuis structure $(P_L,N_L)$ on the leaf $L.$   If we suppose that $N_L$ has constant Riesz index $k$ and the foliation $\ker N_L^k$ is regular, then the leaf of the foliation ${\mathcal F}^{\ker N_L^k}$  is the tangent space of the leaf of $\ker N_L^k.$ Therefore, the condition ${\mathcal F}^{\ker N_L^k}$ holds. Consequently, from Theorem \ref{thm:symplectic:Nijenhuis:nondegenerate:N}, we have a symplectic-Nijenhuis structure with a nondegenerate Nijenhuis tensor on the reduced manifold $L/(\ker N_L^k).$ Thus, we recover the Magri-Morosi reduction process  \cite{MagriMorosi}.

\section{An explicit example of reduction of a Poisson-Nijenhuis Lie algebroid}
\subsection{A $G$-invariant Poisson-Nijenhuis structure on the cotangent bundle of a semidirect product of Lie groups }   %
\label{sec:example:reduction}
Let $H_1$ and $H_2$ be two Lie groups with Lie algebras $\h_1$ and
$\h_2$, respectively. Assume that there is an action $\phi\colon H_1
\times H_2\rightarrow H_2$  of $H_1$ on $H_2$ by Lie group isomorphisms and consider the semidirect product
$G=H_1 \times_\phi H_2$ whose operation is defined by
$$
(h_1,h_2)\cdot(h'_1,h'_2)=(h_1\cdot h'_1, h_2\cdot \phi(h_1,h_2')).
$$
Note that $H_2$ is a normal subgroup of $G$. The Lie algebra
associated to $G=H_1 \times_\phi H_2$ is $\g=\h_1 \times_\Phi h_2$
with the bracket
$$
[(\xi_1,\xi_2),(\eta_1,\eta_2)]_\g=\left([\xi_1,\eta_1]_{\h_1},
\Phi(\xi_1,\eta_2)- \Phi(\eta_1,\xi_2)+ [\xi_2,\eta_2]_{\h_2}\right),
$$
for all $\xi_1,\eta_1\in\h_1$, $\xi_2,\eta_2\in\h_2$, where
$\Phi=T_{(e_1,e_2)}\phi\colon \h_1 \times \h_2\rightarrow \h_2$ is the action induced
by $\phi\colon H_1 \times H_2\rightarrow H_2$. We remark  that $\h_1$ is
a Lie subalgebra and $\h_2$ is an ideal of $\g$. Consider now
$M=T^*G$. It may be identified with $G\times\g^*$ as follows:
\begin{align*}
M=T^*G &\longrightarrow G\times\g^*, \;\;\;\;\;\alpha_g \in T^*_gG \longmapsto (g, T^*_e l_g(\alpha_g))\in G\times {\frak g}^*,
\end{align*}
where $l_g\colon G\rightarrow G$ denotes the left translation by
$g\in G$. Under the identification $T^*G \cong G\times\g^*$, the
canonical symplectic structure of $T^*G$
\begin{align*}
\Omega \colon G\times\g^* \longrightarrow (\g\times\g^*)^* \times
(\g\times\g^*)^*
\end{align*}
is defined by
$$
\Omega_{(g,\mu)}\left((\xi,\pi),(\xi',\pi')\right)=\pi'(\xi)-\pi(\xi')+\mu([\xi,\xi']_\g),
$$
for all $\xi,\xi'\in\g$, $\pi,\pi'\in\g^*$.
Note that $\Omega$ is $G$-invariant.

We define now on $T^*G$ a singular Poisson structure compatible with
$\Omega$. Let
$$
{\mathcal P}_{\g}\colon \g=\h_1 \times_\Phi \h_2 \longrightarrow \h_1
$$
be the canonical projection on the first factor. Then we have that  $\h_1\times
{\mathcal P}_{\g}^*(\h_1^*)\hookrightarrow\g \times\g^*$ is a symplectic subspace  of $T_{(e,\mu)}(
G\times\g^*)\cong \g \times\g^*$. Indeed,  let $\xi\in\h_1$, $\alpha\in\h_1^*$ be such
that
$$
\Omega_{(e,\mu)}\left((\xi,{\mathcal P}_{\g}^*(\alpha)),(\xi',{\mathcal P}_{\g}^*(\beta))\right)={\mathcal P}_{\g}^*(\beta)(\xi)-{\mathcal P}_{\g}^*(\alpha)(\xi')+\mu([\xi,\xi']_\g)=0,
$$
for all $\xi'\in\h_1$, $\beta\in\h_1^*$. Hence, we have
$$
\Omega_{(e,\mu)}\left((\xi,{\mathcal P}_{\g}^*(\alpha)),(0,{\mathcal P}_{\g}^*(\beta))\right)=\beta(\xi)=0
\Rightarrow \xi=0
$$
and
$$
\Omega_{(e,\mu)}\left((0,{\mathcal P}_{\g}^*(\alpha)),(\xi',{\mathcal P}_{\g}^*(\beta))\right)=-\alpha(\xi')=0
\Rightarrow \alpha=0.
$$

We consider now the symplectic subbundle
$$
\begin{array}{rcl}
\F_{\h_1}\colon (g,\mu)\in G\times\g^*\longmapsto
(T_el_g)(\h_1)\times {\mathcal P}_{\g}^*(\h_1^*)&=&T_{(e,\mu)}(l_g,id)(\h_1\times
{\mathcal P}_{\g}^*(\h_1^*))\\&&\subset T_{(g,\mu)}(G\times\g^*).
\end{array}
$$
We show that it is integrable. A basis of sections of this
subbundle is
$$
\{(\lxi,C_\alpha)\;|\;\xi\in\h_1,\,\alpha\in P^*(\h_1^*)\},
$$
where $\lxi$ is the left invariant vector field associated to $\xi$
and $C_\alpha$ is the vector field constant at $\alpha$. The bracket of these basic elements is given by
$$
[(\lxi,C_\alpha),(\lxi',C_\beta)]=([\lxi,\lxi'],0)=(\overleftarrow{[\xi,\xi']}_{\g},0).
$$
Since $\F_{\h_1}$ is symplectic, we have the decomposition
$$T(T^*G)\cong T(G\times {\frak g}^*)=\F_{\h_1}\oplus(\F_{\h_1})^\perp,$$
where $(\F_{\h_1})^\perp$ is the orthogonal to $\F_{\h_1}$ with
respect to the symplectic form $\Omega$.

Now, we define a Poisson bracket $\{\cdot,\cdot\}_{\h_1}$ in $T^*G$ as follows. For $f,g\in
C^\infty(T^*G)$,
$$
\{f,g\}_{\h_1}=\Omega(\Pa(\Ha_f^\Omega),\Pa(\Ha_g^\Omega))=\Pa(\Ha_{g}^\Omega(f))= \{f,g\}_\Omega-\Q(\Ha_g^\Omega)(f),
$$
where $\Pa\colon T(T^*G)\rightarrow\F_{\h_1}$ and $\Q\colon
T(T^*G)\rightarrow(\F_{\h_1})^\perp$ are the symplectic projectors and $\{\cdot,\cdot\}_\Omega$ is the Poisson bracket on $T^*G$ associated with the canonical symplectic structure of $T^*G$.
Here $\Ha_s^\Omega$ denotes the hamiltonian vector field of $s\in C^\infty(T^*G)$ with respect to the canonical symplectic structure of $T^*G$.

The symplectic foliation of $\{\cdot,\cdot\}_{\h_1}$ is $\F_{\h_1}$
since
$$
\Ha_g^{\{\cdot,\cdot\}_{\h_1}}=\Pa(\Ha_g^\Omega),
$$
where ${\mathcal H}_g^{\{\cdot,\cdot\}_{\h_1}}$ is the hamiltonian vector field of $g$ with respect to the Poisson bracket $\{\cdot,\cdot\}_{\h_1}.$

Keep into account that if $\theta\in T^*G$ and $L_\theta$ is the
leaf of $\F_{\h_1}$ passing through $\theta$, then we have
$$
\Ha_{f\smc\iota_\theta}^{\iota_\theta^*\Omega}=
\Pa(\Ha_f^\Omega)_{|L_\theta},
$$
where $\iota_\theta\colon L_\theta\hookrightarrow T^*G$ is the canonical inclusion. Note that
$$(\iota(\Pa(\Ha_f^\Omega)_{|L_\theta})\iota_\theta^*\Omega)(\Pa X)=\Omega_{|L_\theta}(\Ha_{f|L_\theta}^\Omega, \Pa X)=d(f\circ \iota_\theta)(\Pa X).$$ Thus,
$$
\{f\smc\iota_\theta,g\smc\iota_\theta\}_{\iota_\theta^*\Omega}=\left(\{f,g\}_{\h_1}\right)_{|L_\theta}\qquad\mbox{for
all }f,g\in C^\infty(T^*G).
$$
Therefore $L_\theta$ is the leaf of the symplectic foliation of
$\{\cdot,\cdot\}_{\h_1}$ through the point $\theta$.

It is clear that the Poisson bracket $\{\cdot,\cdot\}_{\h_1}$ is $G$-invariant.

We now study the compatibility between $\{\cdot,\cdot\}_{\h_1}$ and
$\{\cdot,\cdot\}_\Omega$. We know that
$$
\{f,g\}_{\h_1}= \{f,g\}_\Omega-\Q(\Ha_g^\Omega)(f).
$$
Next, we check that $\{f,g\}_{\h_2}= \Q(\Ha_g^\Omega)(f)$ is
Poisson.

Since $\F_{\h_1}$ and $\Omega$ are
$G$-invariant, then $(\F_{\h_1})^\perp$ is $G$-invariant.
Therefore
for describing $(\F_{\h_1})^\perp$ is enough to know $(\F_{\h_1})^\perp(e,\mu).$ A direct computation proves that
$$
(\F_{\h_1})^\perp(e,\mu)=\{(\xi,\pi)\in\g \times \g^*\;|\;
\xi\in\ker\Pa, \; \pi_{|\h_1}=-\xi_{\g^*}(\mu)_{|\h_1}\},
$$
where $\xi_{\g^*}(\mu)=-ad^*_\xi\mu$. Hence
\begin{align*}
(\F_{\h_1})^\perp(g,\mu)&=T_{(e,\mu)}(l_g,id) \left(
(\F_{\h_1})^\perp(e,\mu) \right)\\
&=\{(v_g,\pi)\in T_g G^* \times \g^*\;|\; (T_g
l_{g^{-1}})(v_g)\in\ker\Pa , \;
\pi_{|\h_1}=-\xi_{\g^*}(\mu)_{|\h_1}\}.
\end{align*}
The sections of $(\F_{\h_1})^\perp$ are of the form
\begin{align*}
\{(\overleftarrow{\xi},X) \;|\; \xi\in \ker\Pa,\; X\in \X(\g^*),\;
X(\mu)_{|\h_1}=-\xi_{\g^*}(\mu)_{|\h_1}, \forall \mu\in \g^*
\}\subseteq \X(G)\times \X(\g^*).
\end{align*}
and the brackets of them
$$
[(\overleftarrow{\xi},X),(\overleftarrow{\xi'},Y)]=([(\overleftarrow{\xi},\overleftarrow{\xi'}],[X,Y])
=(\overleftarrow{[\xi,\xi']_\g},[X,Y]).
$$
with $\xi,\xi'\in \ker\Pa$, $X,Y\in \X(\g^*)$ such that
$X(\mu)(\hat{\eta})=\mu([\xi,\eta])$ and $Y(\mu)(\hat{\eta})=\mu([\xi',\eta])$, for all
$\mu\in\g^*, \eta\in \h_1$. Here $\hat{\eta}:{\frak g}^*\to \Rr$ is the linear function induced by $\eta.$

Since $\xi,\xi'\in \h_2$ and $\h_2$ is a Lie subalgebra of $\g$, it
follows that $[\xi,\xi']_\g\in\h_2$. If $\mu\in\g^*$ and
$\eta\in\h_1$, then
$$
[X,Y](\mu)(\hat{\eta})=X(\mu)(Y(\hat{\eta}))-Y(\mu)(X(\hat{\eta})).
$$
Moreover,
$Y(\hat{v})(\mu')=Y(\mu')(\hat{\eta})=\mu'([\xi',\eta]_\g)$, for all
$\mu'\in\g^*$. Therefore, keeping in account that $\h_1$ is an ideal
in $\g$ we get
\begin{align*}
X(\mu)(Y(\hat{\eta}))&=X(\mu) (\widehat{[\xi',\eta]}_\g)= \mu ([\xi,[\xi',{\eta}]_\g]_\g), \\
Y(\mu)(X(\hat{\eta}))&=\mu ([\xi',[\xi,{\eta}]_\g]_\g).
\end{align*}
Hence
$$
[X,Y](\mu)(\hat{v})=\mu ([\xi,[\xi',{\eta}]_\g]_\g+
[\xi',[{\eta},\xi]_\g]_\g)=-\mu ([{\eta},[\xi,\xi']_\g]_\g) =
\mu([[\xi,\xi']_\g,\eta]_\g).
$$
Therefore $(\F_{\h_1})^\perp$ is a symplectic foliation, so that we
can consider the Poisson bracket $\{\cdot,\cdot\}_{\h_2}$ associated
to $(\F_{h_1})^\perp$, given by
$$
\{f,g\}_{\h_2}=\Q(\Ha_g^\Omega)(f).
$$
Thus $\{\cdot,\cdot\}_{\Omega}$ and $-\{\cdot,\cdot\}_{\h_1}$ are
compatible, since
$$\{\cdot,\cdot\}_{\Omega}+(-\{\cdot,\cdot\}_{\h_1})=\{\cdot,\cdot\}_{\h_2}.$$

Consequently, we can consider the Poisson-Nijenhuis manifold $(T^*G,\Omega,N)$, where $N=\Lambda_{\h_1}^\sharp\smc\Omega^\flat$ and $\Lambda_{\h_1}^\sharp:T^*(TG)\to T(T^*G)$ is the morphism induced by the Poisson bracket $\{\cdot, \cdot\}_{\h_1}.$  Using that $\{\cdot,\cdot\}_{\h_1}$ is $G$-invariant, it follows that $N$ is $G$-invariant.

\subsection{The Poisson-Nijenhuis Lie algebroid and its reduction}
We consider the action of $G$ on $T^*G\cong G\times {\frak g}^*$ by left translations, that is
\begin{align*}
G\times (G\times\g^*) &\longrightarrow G\times\g^*\\
(g',(g,\eta)) &\longmapsto (g'\cdot g,\eta).
\end{align*}
and let $\pi\colon T^*G \rightarrow T^*G/G$ be the corresponding
principal $G$-bundle.
 Since $\Omega$ and $N$ are $G$-invariant, we can consider
the corresponding Atiyah algebroid on
$$\tilde\pi\colon (T(T^*G))/G \longrightarrow T^*G/G.$$
We denote by $(\br,\rho)$ the Lie algebroid structure on
$\tilde\pi:(T(T^*G))/G)\to (T^*G)/G$.

Note that $\Gamma(\widetilde{\pi})$ may be identified with the set $\X^G(T^*G)$ of $G$-invariant vector fields on $T^*G$ and that if $X\in \X^G(T^*G)$ then $X$ is $\pi$-projectable. In fact, $\rho(X)=(T\pi)(X).$
Using Proposition  \ref {2.6}, we obtain a
Poisson-Nijenhuis structure on $\tilde\pi$ which we denote by
$(\tilde\Lambda,\tilde N)$. The foliation defined by the
distribution $D=\rho(\tilde\Lambda^\sharp((T^*(T^*G))/G))$ has just
one leaf which is the whole $(T^*G)/G$, since
$\Omega^\sharp((\Omega^1(T^*G))^G)=\X^G(T^*G)$ and these vector
fields generate all the vector fields in $T^*G$. In fact, $\tilde\Lambda$ is nondegenerate on $\tilde\pi: (T(T^*G))/G\to T^*G/G.$

Next, we compute $\ker N.$

Let $X\in\X^G(T^*G)$ be such that $N(X)=0$. Then, we have
$\Lambda_{\h_1}^\sharp\smc\Omega^\flat(X)=0$ and hence
$\Omega^\flat(X)\in \ker\Lambda_{\h_1}^\sharp$. Now,
$$
\alpha\in \ker\Lambda_{\h_1}^\sharp \Longleftrightarrow
\Lambda_{\h_1}^\sharp(\alpha)=\Pa(\Omega^\sharp(\alpha))=0
\Longleftrightarrow \alpha\in\Omega^\flat(\ker\Pa).
$$
Therefore,  $\ker \tilde N=(\F_{\h_1})^\perp$. Note that
$(\F_{\h_1})^\perp$ is a $G$-invariant foliation and hence it is
regular.

Let $X\in\X^G(T^*G)$ be such that $\tilde N^2(X)=0$. Then, we have
$\tilde N(X)\in \ker \tilde N=\ker\Pa$. That is,
$\Omega^\flat(X)\in(\Lambda_{\h_1}^\sharp)^{-1}(\ker\Pa)$. Now,
$$
\alpha\in(\Lambda_{\h_1}^\sharp)^{-1}(\ker\Pa) \Longleftrightarrow
\Lambda_{\h_1}^\sharp(\alpha)\in \ker \Pa \Longleftrightarrow
\Omega^\sharp(\alpha)\in \ker \Pa,
$$
since $\Omega^\sharp=\Lambda_{\h_1}^\sharp+\Lambda_{\h_2}^\sharp$
and $\Lambda_{\h_2}^\sharp(\alpha)\in \ker \Pa$. Hence $\ker \tilde
N^2=(\F_{\h_1})^\perp$. Therefore, the Riesz index is 1.

We study now the foliation ${\mathcal F}^{\ker N}$. The complete and vertical lifts of
the sections of $\ker N$ are complete and vertical lifts of
$G$-invariant vector fields in $T^*G$. Since $\ker N$ is regular,
then ${\mathcal F}^{\ker N}$ is regular.

 Then, if $L_\theta$ is the leaf of ${\ker N}$  passing
through $\theta$, we have that the leaf of ${\mathcal F}^{\ker N}$ passing through $v_\theta$ is
 $$
v_\theta+T L_\theta=v_\theta + (\bigcup_{x\in L_\theta}T_x L_\theta)=v_\theta + (\bigcup_{x\in L_\theta}\ker N(x)).
$$
Note that the condition ${\mathcal F}^{\ker N}$ is therefore also satisfied and hence
Theorem \ref{thm:symplectic:Nijenhuis:nondegenerate:N} can be
applied.

Finally, note that this example can be generalized if we consider a
Lie group $G$ with Lie algebra $\g$,  $\h$ a Lie subalgebra of
$\g$ and ${\mathcal P}_{\g}\colon \g\longrightarrow \h$ a projector
$({\mathcal P}_{\g |\h}=1_\h)$ such that $\ker{\mathcal P}_{\g}$ is an ideal of $\g$ and $\Pa$
is linear. Thus, on $T^*G$ we can define two compatible Poisson
structures (one of them being the canonical symplectic structure on
$T^*G$) and hence we can induce a Poisson-Nijenhuis structure on $T^*G$.

\section{Acknowledgments}

This work has been partially supported by MICIN (Spain) grants
MTM2008-03606-E, MTM2009-13383, and by the Canary Government grants
SOLSUBC200801 000238 and ProID20100210. A.D.N. is also supported by
the FCT grant PTDC/MAT/\allowbreak 099880/2008 and thanks MICIN for
a Juan de La Cierva contract at University of La Laguna during which
part of this work was done.


\end{document}